\documentclass[12pt,reqno]{smfart}
\usepackage{amsmath,amssymb,smfthm,stmaryrd,epic,enumerate}
\usepackage[frenchb]{babel}
\usepackage[latin1]{inputenc}
\usepackage{a4wide}



\usepackage{pstricks}

\usepackage{xypic}
\usepackage[active]{srcltx}
\usepackage{graphicx}

\usepackage{lmodern}

\usepackage{hyperref}

\NumberTheoremsIn{subsection}\SwapTheoremNumbers

\begin{document}
\newtheorem{prop-defi}[smfthm]{Proposition-DÈfinition}
\newtheorem{notas}[smfthm]{Notations}
\newtheorem{nota}[smfthm]{Notation}
\newtheorem{defis}[smfthm]{DÈfinitions}
\newtheorem{hypo}[smfthm]{HypothËse}

\def\Xm{{\mathbb X}}
\def\Um{{\mathbb U}}
\def\Am{{\mathbb A}}
\def\Fm{{\mathbb F}}
\def\Mm{{\mathbb M}}
\def\Nm{{\mathbb N}}
\def\Pm{{\mathbb P}}
\def\Qm{{\mathbb Q}}
\def\Zm{{\mathbb Z}}
\def\Dm{{\mathbb D}}
\def\Cm{{\mathbb C}}
\def\Rm{{\mathbb R}}
\def\Gm{{\mathbb G}}
\def\Lm{{\mathbb L}}
\def\Km{{\mathbb K}}
\def\Om{{\mathbb O}}
\def\Em{{\mathbb E}}

\def\BC{{\mathcal B}}
\def\QC{{\mathcal Q}}
\def\TC{{\mathcal T}}
\def\ZC{{\mathcal Z}}
\def\AC{{\mathcal A}}
\def\CC{{\mathcal C}}
\def\DC{{\mathcal D}}
\def\EC{{\mathcal E}}
\def\FC{{\mathcal F}}
\def\GC{{\mathcal G}}
\def\HC{{\mathcal H}}
\def\IC{{\mathcal I}}
\def\JC{{\mathcal J}}
\def\KC{{\mathcal K}}
\def\LC{{\mathcal L}}
\def\MC{{\mathcal M}}
\def\NC{{\mathcal N}}
\def\OC{{\mathcal O}}
\def\PC{{\mathcal P}}
\def\UC{{\mathcal U}}
\def\VC{{\mathcal V}}
\def\XC{{\mathcal X}}
\def\SC{{\mathcal S}}

\def\BF{{\mathfrak B}}
\def\AF{{\mathfrak A}}
\def\GF{{\mathfrak G}}
\def\EF{{\mathfrak E}}
\def\CF{{\mathfrak C}}
\def\DF{{\mathfrak D}}
\def\JF{{\mathfrak J}}
\def\LF{{\mathfrak L}}
\def\MF{{\mathfrak M}}
\def\NF{{\mathfrak N}}
\def\XF{{\mathfrak X}}
\def\UF{{\mathfrak U}}
\def\KF{{\mathfrak K}}
\def\FF{{\mathfrak F}}

\def \longmapright#1{\smash{\mathop{\longrightarrow}\limits^{#1}}}
\def \mapright#1{\smash{\mathop{\rightarrow}\limits^{#1}}}
\def \lexp#1#2{\kern \scriptspace \vphantom{#2}^{#1}\kern-\scriptspace#2}
\def \linf#1#2{\kern \scriptspace \vphantom{#2}_{#1}\kern-\scriptspace#2}
\def \linexp#1#2#3 {\kern \scriptspace{#3}_{#1}^{#2} \kern-\scriptspace #3}

\def \Ext{\mathop{\mathrm{Ext}}\nolimits}
\def \ad{\mathop{\mathrm{ad}}\nolimits}
\def \sh{\mathop{\mathrm{Sh}}\nolimits}
\def \irr{\mathop{\mathrm{Irr}}\nolimits}
\def \FH{\mathop{\mathrm{FH}}\nolimits}
\def \FPH{\mathop{\mathrm{FPH}}\nolimits}
\def \coh{\mathop{\mathrm{Coh}}\nolimits}
\def \res{\mathop{\mathrm{res}}\nolimits}
\def \op{\mathop{\mathrm{op}}\nolimits}
\def \rec {\mathop{\mathrm{rec}}\nolimits}
\def \art{\mathop{\mathrm{Art}}\nolimits}
\def \hyp {\mathop{\mathrm{Hyp}}\nolimits}
\def \cusp {\mathop{\mathrm{Cusp}}\nolimits}
\def \scusp {\mathop{\mathrm{Scusp}}\nolimits}
\def \Iw {\mathop{\mathrm{Iw}}\nolimits}
\def \JL {\mathop{\mathrm{JL}}\nolimits}
\def \speh {\mathop{\mathrm{Speh}}\nolimits}
\def \isom {\mathop{\mathrm{Isom}}\nolimits}
\def \Vect {\mathop{\mathrm{Vect}}\nolimits}
\def \groth {\mathop{\mathrm{Groth}}\nolimits}
\def \hom {\mathop{\mathrm{Hom}}\nolimits}
\def \deg {\mathop{\mathrm{deg}}\nolimits}
\def \val {\mathop{\mathrm{val}}\nolimits}
\def \det {\mathop{\mathrm{det}}\nolimits}
\def \rep {\mathop{\mathrm{Rep}}\nolimits}
\def \spec {\mathop{\mathrm{Spec}}\nolimits}
\def \fr {\mathop{\mathrm{Fr}}\nolimits}
\def \frob {\mathop{\mathrm{Frob}}\nolimits}
\def \ker {\mathop{\mathrm{Ker}}\nolimits}
\def \im {\mathop{\mathrm{Im}}\nolimits}
\def \Red {\mathop{\mathrm{Red}}\nolimits}
\def \red {\mathop{\mathrm{red}}\nolimits}
\def \aut {\mathop{\mathrm{Aut}}\nolimits}
\def \diag {\mathop{\mathrm{diag}}\nolimits}
\def \spf {\mathop{\mathrm{Spf}}\nolimits}
\def \Def {\mathop{\mathrm{Def}}\nolimits}
\def \twist {\mathop{\mathrm{Twist}}\nolimits}
\def \supp {\mathop{\mathrm{Supp}}\nolimits}
\def \Id {{\mathop{\mathrm{Id}}\nolimits}}
\def \lie {{\mathop{\mathrm{Lie}}\nolimits}}
\def \Ind{\mathop{\mathrm{Ind}}\nolimits}
\def \ind {\mathop{\mathrm{ind}}\nolimits}
\def \soc {\mathop{\mathrm{Soc}}\nolimits}
\def \top {\mathop{\mathrm{Top}}\nolimits}
\def \ker {\mathop{\mathrm{Ker}}\nolimits}
\def \coker {\mathop{\mathrm{Coker}}\nolimits}
\def \gal {{\mathop{\mathrm{Gal}}\nolimits}}
\def \Nr {{\mathop{\mathrm{Nr}}\nolimits}}
\def \rn {{\mathop{\mathrm{rn}}\nolimits}}
\def \tr {{\mathop{\mathrm{Tr~}}\nolimits}}
\def \Sp {{\mathop{\mathrm{Sp}}\nolimits}}
\def \st {{\mathop{\mathrm{St}}\nolimits}}
\def \sp{{\mathop{\mathrm{Sp}}\nolimits}}
\def \perv{\mathop{\mathrm{Perv}}\nolimits}
\def \tor {{\mathop{\mathrm{Tor}}\nolimits}}
\def \nrd {{\mathop{\mathrm{Nrd}}\nolimits}}
\def \nilp {{\mathop{\mathrm{Nilp}}\nolimits}}
\def \obj {{\mathop{\mathrm{Obj}}\nolimits}}
\def \cl {{\mathop{\mathrm{cl}}\nolimits}}

\def \rem{{\noindent\textit{Remarque:~}}}
\def \ext {{\mathop{\mathrm{Ext}}\nolimits}}
\def \End {{\mathop{\mathrm{End}}\nolimits}}

\def\semi{\mathrel{>\!\!\!\triangleleft}}
\let \DS=\displaystyle

\setcounter{secnumdepth}{3} \setcounter{tocdepth}{3}

\def \Fil{\mathop{\mathrm{Fil}}\nolimits}
\def \CoFil{\mathop{\mathrm{CoFil}}\nolimits}
\def \Fill{\mathop{\mathrm{Fill}}\nolimits}
\def \CoFill{\mathop{\mathrm{CoFill}}\nolimits}
\def\SF{{\mathfrak S}}
\def\PF{{\mathfrak P}}
\def \EFil{\mathop{\mathrm{EFil}}\nolimits}
\def \EFill{\mathop{\mathrm{EFill}}\nolimits}
\def \FP{\mathop{\mathrm{FP}}\nolimits}

\let \longto=\longrightarrow
\let \oo=\infty

\let \d=\delta
\let \k=\kappa

\newcommand{\marque}{\addtocounter{smfthm}{1}
{\smallskip \noindent \textit{\thesmfthm}~---~}}

\renewcommand\atop[2]{\ensuremath{\genfrac..{0pt}{1}{#1}{#2}}}

\title[Congruences automorphes et torsion]{Congruences automorphes et torsion dans la cohomologie d'un systËme local d'Harris-Taylor}

\alttitle{Automorphic congruences and torsion in the cohomology of Harris-Taylor local systems}

\author{Boyer Pascal}
\email{boyer@math.univ-paris13.fr}
\address{}

\frontmatter

\begin{abstract}
Cet article fait partie d'une sÈrie de papiers de l'auteur visant ‡ comprendre la 
cohomologie ‡ coefficients dans
$\bar \Zm_{l}$ de certaines variÈtÈs  de Shimura unitaires simples.
On utilise les rÈsultats de \cite{boyer-compositio} sur le calcul de la $\bar \Qm_l$-
cohomologie et ceux de \cite{boyer-ens} sur une version entiËre des rÈsultats de 
\cite{boyer-invent2} sur des filtrations du faisceau pervers des cycles proches,
pour produire d'une part des classes
de cohomologie de torsion dans les termes initiaux d'une suite spectrale calculant la 
cohomologie de la variÈtÈ de Shimura, et d'autre part
des congruences entre reprÈsentations automorphes.
\end{abstract}

\begin{altabstract}
This paper is a part of a sequence of works of the author to understand the $\bar \Zm_l$-cohomology of some simple unitary Shimura varieties. We use the results of
\cite{boyer-compositio} on the description of the $\bar \Qm_l$-cohomology groups, and
those of \cite{boyer-ens} about a $\bar \Zm_l$-version of the results of \cite{boyer-invent2} 
on filtrations of the perverse sheaf of nearby cycles, to produce on one part 
torsion classes in some of the initial
terms of a spectral sequence which calculate the cohomology groups of the
Shimura variety and, on a another part, some automorphic congruences.
\end{altabstract}

\subjclass{14G22, 14G35, 11G09, 11G35,\\ 11R39, 14L05, 11G45, 11Fxx}

\keywords{VariÈtÈs de Shimura, modules formels, correspondances de Langlands, correspondances de
Jacquet-Langlands, faisceaux pervers, cycles Èvanescents, reprÈsentations automorphes, congruences}

\altkeywords{Shimura varieties, formal modules, Langlands correspondences, Jacquet-Langlands
correspondences,
monodromy filtration, weight-monodromy conjecture, perverse sheaves, vanishing cycles}

\maketitle

\pagestyle{headings} \pagenumbering{arabic}


\section*{Introduction}
\renewcommand{\theequation}{\arabic{equation}}
\backmatter

Dans \cite{h-t}, les auteurs Ètudient la cohomologie d'une classe de variÈtÈs de Shimura 
unitaires $X$ de dimension $d-1$ sur son corps reflex $F$, attachÈes ‡ un groupe de 
similitudes $G/\Qm$. 
La description de la cohomologie d'une telle variÈtÈ 
de Shimura est donnÈe via la cohomologie de la fibre spÈciale $\overline X_v$
de $X$ en une place $v$ de $F$, ‡ coefficients dans le complexe des cycles Èvanescents. 
Dans \cite{h-t}, pour tout $1†\leq h \leq d$, la restriction des faisceaux des cycles
Èvanescents ‡ la strate de Newton de la fibre spÈciale $\overline X_v$ de dimension $d-h$,
est dÈcrite ‡ l'aide de certains 
systËmes locaux, dits d'Harris-Taylor dans \cite{boyer-invent2}, associÈs
‡ des reprÈsentations du groupe des inversibles $D_{v,h}^\times$ d'une algËbre ‡ division 
centrale sur le corps local $F_v$ et d'invariant $1/h$.

En ce qui concerne la torsion, contrairement ‡ \cite{h-t}, on ne peut pas se contenter 
d'arguments sur la somme alternÈe des groupes de cohomologie car
cette somme ne voit pas la torsion. Afin d'Ètudier chacun de ces groupes de cohomologie,
on commence par filtrer le faisceau pervers dÈfini par le complexe
des cycles Èvanescents, et on Ètudie la suite spectrale associÈe. Dans 
\cite{boyer-ens} on construit des filtrations \og entiËres \fg{} du faisceau pervers
des cycles Èvanescents qui gÈnÈralisent celles de \cite{boyer-invent2} de sorte qu'on est 
dans un premier temps amenÈ ‡ Ètudier la cohomologie des systËmes locaux 
d'Harris-Taylor. Pour ce qui concerne leurs quotients libres, ils sont  explicitÈs dans \cite{boyer-compositio} et 
le premier objectif de ce papier est la construction \S \ref{para-torsion} de classes de cohomologie de torsion. 
Une version simplifiÈe de la proposition \ref{prop-torsion} peut s'exprimer comme suit.

\smallskip

\textit{Soit $\pi_v$ une reprÈsentation irrÈductible cuspidale de $GL_g(F_v)$ dont la rÈduction modulo $l$,
a pour support supercuspidal un segment de Zelevinsky relatif ‡ une supercuspidale de $GL_{\bar g}(F_v)$
avec $\bar g$ un diviseur strict de $g$. Pour tout $t \geq 1$ tel que $tg+\bar g \leq d$, la torsion de la 
cohomologie ‡ support compact du systËme local d'Harris-Taylor associÈ, par la correspondance de
Jacquet-Langlands, ‡ la reprÈsentation de Steinberg gÈnÈralisÈe $\st_t(\pi_v)$, est non nulle.}

\smallskip

Ainsi nous disposons d'une
suite spectrale calculant la cohomologie entiËre d'une variÈtÈ de Shimura, dans laquelle
nous savons construire des classes de cohomologie de torsion dans ses termes initiaux
$E_1^{p,q}$; reste alors ‡ comprendre, soit comment cette torsion
disparait dans l'aboutissement, soit pourquoi elle y apparait, problÈmatique sur laquelle 
nous espÈrons revenir.

La construction de ces classes de torsion repose sur l'Ètude de reprÈsentations 
automorphes congruentes. GrossiËrement la problÈmatique est la suivante: Ètant donnÈs
\begin{itemize}
\item une $\bar \Qm_{l}$-reprÈsentation automorphe entiËre $\Pi$; 

\item un $\bar \Fm_l$-constituant irrÈductible $\tau_{v}$ de la rÈduction modulo $l$ de la 
composante locale $\Pi_{v}$ de $\Pi$ en une place $v$;

\item une $\bar \Qm_{l}$-reprÈsentation irrÈductible locale entiËre $\Pi'_{v}$, telle que
$\tau_v$ est un constituant de sa rÈduction modulo $l$,
\end{itemize}
existe-t-il une reprÈsentation automorphe $\Pi'$ de composante locale en $v$ isomorphe 
‡ $\Pi'_{v}$ et \og congruente \fg{} ‡ $\Pi$ en un sens ‡ prÈciser? Bien Èvidemment
nous ne traitons pas cette question dans cette gÈnÈralitÈ mais seulement quelques cas
particuliers dont le plus utile correspond,
cf. le corollaire \ref{coro-congru2}, au cas o˘ $\Pi'_{v}$ est non dÈgÈnÈrÈe et
$\Pi_{v}$ est de la forme $\speh_{s}(\pi_{v})$ avec $\pi_{v}$ non dÈgÈnÈrÈe.
Du cÙtÈ galoisien, on peut voir ce rÈsultat comme 
\og une augmentation de l'irrÈductibilitÈ \fg{} au sens o˘ partant d'une reprÈsentation 
galoisienne $\sigma$
on en construit une autre $\sigma'$ qui lui est \og congruente \fg{} et dont la restriction
$\sigma'_v$ au groupe de dÈcomposition en $v$ consiste ‡ changer un des 
facteurs de $\sigma_v$ d'une des deux faÁons dÈcrites ci-dessous:
\begin{itemize}
\item un facteur indÈcomposable de $\sigma_v$ est \og transformÈ \fg{} 
en un facteur irrÈductible de $\sigma'_v$;

\item un facteur semi-simple de $\sigma_v$ devient un facteur 
indÈcomposable de $\sigma'_v$.
\end{itemize} 
Une version simplifiÈe de la proposition \ref{prop-congru2} dans le cas trivial ‡ l'infini, 
illustrant le premier de ces cas, peut s'exprimer comme suit.

\smallskip

\noindent \textbf{Proposition} \rule[.1cm]{.6cm}{.2pt}

\textit{Soit $\pi'_v$ une reprÈsentation irrÈductible cuspidale de $GL_d(F_v)$ dont la rÈduction modulo $l$
a pour support supercuspidal un segment de Zelevinsky de longueur $s >1$ relativement ‡ une supercuspidale
$\bar \pi_v$ de $GL_{g}(F_v)$ avec $d=sg$ o˘ $\bar \pi_v$ est la rÈduction modulo $l$ d'une
reprÈsentation irrÈductible cuspidale $\pi_v$. \\
Alors Ètant donnÈe une reprÈsentation irrÈductible automorphe $\Pi$ de $G(\Am)$,
triviale ‡ l'infini, dont la composante locale en $v$
est $\Pi_v \simeq \st_s (\pi_v)$, il existe une reprÈsentation irrÈductible automorphe $\Pi'$ de $G(\Am)$, triviale
‡ l'infini, avec $\Pi'_v \simeq \pi'_v$, qui est congruente ‡ $\Pi_v$ au sens de la dÈfinition \ref{defi-faible},
i.e. il existe un ensemble fini $S$ de nombre premiers tel que pour tout premier $w \not \in S$, 
les reprÈsentations
$\Pi_w$ et $\Pi'_w$ sont non ramifiÈes et leurs caractËres de Satake sont congruents modulo $l$.
}

\smallskip

En ce qui concerne le passage d'un facteur semi-simple ‡ un facteur indÈcomposable, 
l'ÈnoncÈ donnÈ au corollaire \ref{coro-congru2} s'exprime comme ci-dessus avec
$\Pi_{v} \simeq \speh_s(\pi_v)$. Le lecteur notera que, contrairement au cas de $GL_2$ traitÈ en toute gÈnÈralitÈ 
dans \cite{billerey}, nous ne nous autorisons ni ‡ augmenter le niveau ni le poids.

Enfin au \S \ref{para-rep-congru-forte}, motivÈ par la construction de reprÈsentations
automorphes fortement congruentes, cf. \S \ref{para-ramifie} les cas ramifiÈs de la 
conjecture \ref{conj} et le lemme \ref{lem-congru-forte}, nous proposons la conjecture
\ref{conj-2} sur la torsion des systËmes locaux d'Harris-Taylor, dont la rÈduction modulo $l$ 
ne devrait dÈpendre que de la rÈduction modulo $l$ dudit systËme local.

\smallskip

L'auteur tient ‡ remercier chaleureusement 
le rapporteur anonyme pour ses nombreuses corrections et amÈliorations.

\tableofcontents

\mainmatter

\renewcommand{\theequation}{\arabic{section}.\arabic{subsection}.\arabic{smfthm}}

\section{Rappels sur les reprÈsentations}

Dans ce qui suit $l$ et $p$ dÈsignent deux nombres premiers distincts. Pour $q$ une puissance de $p$,
on note $e_l(q)$ l'ordre de $q$ dans $\Fm_l^\times$. Dans la suite $K$ dÈsigne une extension finie de $\Qm_p$, 
d'anneau des entiers $\OC_K$ et de corps rÈsiduel $\kappa$ de cardinal $q$. 
On dit que $l$ est \textit{banal} pour $GL_d(K)$ si $e_l(q)>d$. 
On fixe une racine carrÈe de $q$ dans $\bar \Zm_l$ et on note
$$\Xi:\frac{1}{2}\Zm \longto \bar \Qm_l^\times$$ 
le caractËre dÈfini par $\Xi(\frac{1}{2})=q^{1/2}$.

\subsection{ReprÈsentations entiËres de $GL_n(K)$ et leurs rÈduction modulo $l$}
\label{para-rap-rep}

On note $|-|$ la valeur absolue de $K$.

\begin{defi} \label{defi-upiv} \phantomsection
Deux reprÈsentations $\pi$ et $\pi'$ de $GL_n(K)$) sont dites inertiellement Èquivalentes et on note 
$\pi \sim^i \pi'$, s'il existe un caractËre $\xi:\Zm \longto \bar \Qm_l^\times$ tel que 
$$\pi \simeq \pi' \otimes \xi \circ \val \circ \det.$$
On note $e_{\pi}$ le cardinal de l'ensemble des caractËres
$\chi:\Zm \longto \bar \Qm_l^\times$, tels que  $\pi \otimes \chi \circ \val(\det) \simeq \pi$ et on pose
$$\pi \{ n \}:=\pi \otimes q^{-n \val \circ \det}.$$ 
\end{defi}

Pour une suite $\underline r=(0< r_1 < r_2 < \cdots < r_k=d)$, on note $P_{\underline r}$ le sous-groupe
parabolique de $GL_d$ standard associÈ au sous-groupe de Levi 
$$GL_{r_1}(K) \times GL_{r_2-r_1}(K) \times \cdots \times GL_{r_k-r_{k-1}}(K)$$ 
et $N_{\underline r}$ son radical unipotent. Le parabolique opposÈ ‡ $P_{\underline r}$ sera notÈ
$P_{\underline r}^{op}$.

\begin{nota}
Pour $\pi_1$ et $\pi_2$ des $\bar \Qm_l$-reprÈsentations de respectivement $GL_{n_1}(K)$ et
$GL_{n_2}(K)$, $\pi_1 \times \pi_2$ dÈsigne l'induite parabolique normalisÈe
$$\pi_1 \times \pi_2:=\ind_{P_{n_1,n_1+n_2}(K)}^{GL_{n_1+n_2}(K)} \pi_1 \{ n_2/2 \} \otimes \pi_2 \{-n_1/2\}.$$
Dans le cas o˘ $\pi_1$ et $\pi_2$ sont des reprÈsentations de $GL_{t_1g}(K)$ et
$GL_{t_2g}(K)$, on notera $\pi_1 \overrightarrow{\times} \pi_2$ l'induite parabolique $\pi_1\{ -t_2/2 \}
\times \pi_2 \{ t_1/2 \}$, l'entier $g$ Ètant sous-entendu.
\end{nota}

\rem les symboles $\times$ et $ \overrightarrow{\times}$ sont associatifs, le deuxiËme Ètant introduit
afin de minimiser l'Ècriture des torsions.

\begin{nota}
Pour $P=MN$ un parabolique de $GL_d$ de LÈvi $M$ et de radical unipotent $N$, 
l'espace des vecteurs $N(K)$-coinvariants d'une reprÈsentation admissible $\pi$ de $GL_d(K)$, 
est stable sous l'action de $M(K) \simeq P(K)/N(K)$. On notera $J_P(\pi)$ cette reprÈsentation tordue par 
$\delta_P^{-1/2}$: c'est un foncteur exact dit \emph{foncteur de Jacquet}.
\end{nota}

Rappelons qu'une reprÈsentation $\varrho$ de $GL_d(K)$ est dite \textit{cuspidale} 
si pour tout parabolique propre $P$, $J_{P}(\pi)$ est nul. Elle est dite 
\textit{supercuspidale} si elle n'est pas un sous-quotient 
d'une induite parabolique propre. Une reprÈsentation supercuspidale est cuspidale,
la rÈciproque Ètant valable pour les $\bar \Qm_l$-reprÈsentations.

\begin{notas} \label{defi-rep} (cf. \cite{zelevinski2} \S 9 et \cite{boyer-compositio} \S 1.4)
- Soient $g$ un diviseur de $d=sg$ et $\pi$
une reprÈsentation cuspidale irrÈductible de $GL_g(K)$. L'induite
$$\pi\{ \frac{1-s}{2} \} \times \pi \{ \frac{3-s}{2} \} \times \cdots \times \pi \{ \frac{s-1}{2} \}= 
\pi  \overrightarrow{\times} \cdots \overrightarrow{\times} \pi $$ 
possËde un unique quotient (resp. sous-espace) irrÈductible notÈ habituellement $\st_s(\pi)$ (resp.
$\speh_s(\pi)$); c'est une reprÈsentation de Steinberg (resp. de Speh) gÈnÈralisÈe.

- Plus gÈnÈralement,  l'induite
$$\st_t(\pi \{ \frac{1-s}{2} \} ) \times \cdots \times \st_t(\pi \{ \frac{s-1}{2} \} )$$
admet un unique sous-espace irrÈductible que l'on note $\speh_s \bigl ( \st_t(\pi) \bigr )$.

- Enfin, une reprÈsentation gÈnÈrique $\Pi$ de $GL_d(K)$ s'Ècrit comme une induite irrÈductible
$$\st_{t_1}(\pi_1) \times \cdots \times \st_{t_r}(\pi_r)$$
o˘ pour $i=1,\cdots, r$, les $\pi_i$ sont des reprÈsentations irrÈductibles cuspidales de $GL_{g_i}(K)$ et les
$t_i \geq 1$ tels que $\sum_{i=1}^r t_ig_i=d$. Alors la reprÈsentation
$$\speh_s \bigl ( \st_{t_1}(\pi_1) \bigr ) \times \cdots \times \speh_s \bigl (\st_{t_r}(\pi_r) \bigr )$$
est irrÈductible, on la note $\speh_s(\Pi)$.
\end{notas}

\rem dans \cite{boyer-alg}, la reprÈsentation $\st_s(\pi)$ (resp. $\speh_s(\pi)$) est notÈe
$[\overleftarrow{s-1}]_{\pi}$ (resp.  $[\overrightarrow{s-1}]_{\pi})$).
Toute reprÈsentation irrÈductible \emph{essentiellement de carrÈ intÈgrable} 
de $GL_d(K)$ est de la forme $\st_s(\pi)$ pour $\pi$ irrÈductible cuspidale de $GL_g(K)$ 
avec $d=sg$. 

Soient $\psi: \OC_K \longto R^\times$ un caractËre et $\underline \lambda=(\lambda_1 \geq \cdots \geq \lambda_r >0)$ une partition de $n$;
on dÈfinit alors un caractËre $\psi_{\underline \lambda}$ sur le radical unipotent $U$ du Borel standard de $GL_n(K)$, par la formule
$$\psi_{\underline \lambda}(u):=\psi(\sum_i u_{i,i+1})$$
o˘ la somme porte sur les indices $1 \leq i < n$ tels que $i \neq \lambda_1,\lambda_1+\lambda_2, \cdots$.

\begin{prop-defi} \cite{vigneras-livre} III.1.8 \\
Soit $\pi$ une $R$-reprÈsentation irrÈductible de $GL_n(K)$; il existe alors une unique partition $\underline{\lambda_\pi}$
maximale pour l'ordre de Bruhat, telle que le nombre fini
$$m(\pi,\underline \lambda):=\dim_{R} \hom_{RU}(\pi_{|U},\psi_{\underline \lambda})$$
est non nul. On dÈsigne cette partition comme \emph{le niveau de Whittaker} de $\pi$.
\end{prop-defi}

\begin{defi} 
Une reprÈsentation irrÈductible $\pi$ telle que $\underline{\lambda_\pi}=(n)$, est dite \emph{non dÈgÈnÈrÈe}; on a alors $m(\pi,(n))=1$.
\end{defi}

\rem toute $\bar \Qm_l$-reprÈsentation non dÈgÈnÈrÈe s'Ècrit comme une induite 
irrÈductible de la forme 
$$\st_{t_1}(\pi_1) \times \cdots \st_{t_r}(\pi_r)$$
o˘ les $\pi_i$ sont des reprÈsentations irrÈductibles cuspidales.

\begin{defi} Une $\bar \Qm_l$-reprÈsentation lisse de longueur finie $\pi$ de $GL_d(K)$ est dite \emph{entiËre} s'il existe une extension finie $E/ \Qm_l$ contenue dans
$\bar \Qm_l$, d'anneau des entiers 
$\OC_E$ et une $\OC_E$-reprÈsentation $L$ de $GL_d(K)$, qui est un 
$\OC_E$-module libre, telle que $\bar \Qm_l \otimes_{\OC_E} L \simeq \pi$
et tel que $L$ est un $\OC_E GL_n(K)$-module de type fini. 
Soit $\k_E$ le corps rÈsiduel de $\OC_E$, on dit que $\bar \Fm_l \otimes_{\kappa_E} 
\kappa_E \otimes_{\OC_E} L$ est la rÈduction modulo $l$ de $L$. 
\end{defi}

\rem le \textit{principe de Brauer-Nesbitt} affirme que 
la semi-simplifiÈe de $\bar \Fm_l \otimes_{\OC_E} L$ est une $\bar \Fm_l$-reprÈsentation de $GL_d(K)$ 
de longueur finie qui ne dÈpend pas du choix de $L$. Son image dans le groupe de
Grothendieck sera notÈe $r_l(\pi)$ et dite \textit{la rÈduction modulo $l$ de $\pi$}.

\noindent \textit{Exemples}: 
\begin{itemize}
\item d'aprËs  \cite{vigneras-induced} V.9.2 ou \cite{dat-jl} \S 2.2.3,
la rÈduction modulo $l$ de $\speh_s(\pi)$ est irrÈductible de sorte qu'‡ isomorphisme 
prËs, $\speh_s(\pi)$ possËde un unique rÈseau stable, 
cf. par exemple \cite{bellaiche-ribet} proposition 3.3.2 et la remarque qui suit.

\item En ce qui concerne $\st_s(\pi)$, la situation est en gÈnÈral plus complexe; 
signalons par exemple que dans \cite{boyer-alg}, nous en avons construit diffÈrents 
rÈseaux non isomorphes dits d'induction.
\end{itemize}


\begin{nota}  
Pour $\varrho$ une $\bar \Fm_l$-reprÈsentation cuspidale irrÈductible,
on note $\epsilon(\varrho)$ le cardinal de la droite de Zelevinsky de $\varrho$, i.e. de l'ensemble
$\{ \varrho\{ i\} ~/~ i \in \Zm \}$. On pose comme dans \cite{vigneras-induced} p.51, 
$$m(\varrho)= \left \{ \begin{array}{ll} \epsilon(\varrho) & \hbox{ si }\epsilon(\varrho)>1 \\
l & \hbox{sinon} \end{array} \right.$$
\end{nota}

\rem  $\epsilon(\varrho)$ un diviseur de $e_l(q)$.

\begin{prop} (\cite{vigneras-induced} III.5.14) \label{prop-stl}
Soit $\varrho$ une $\bar \Fm_l$-reprÈsentation irrÈductible cuspidale de $GL_g(K)$. L'induite parabolique
$$\varrho \overrightarrow{\times} \cdots \overrightarrow{\times} \varrho = \varrho\{\frac{1-s}{2} \} \times
\cdots \times \varrho \{ \frac{s-1}{2} \}$$ admet un unique sous-quotient non dÈgÈnÈrÈ que l'on note 
$\st_s(\varrho)$.
La reprÈsentation $\st_s(\varrho)$ est cuspidale si et seulement si
$$s=1,m(\varrho),m(\varrho)l,\cdots,m(\varrho) l^u,\cdots$$
La rÈunion de ces derniËres avec les supercuspidales forment l'ensemble des reprÈsentations
cuspidales.
\end{prop}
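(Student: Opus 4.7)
The plan is to follow Vign\'eras' approach in \cite{vigneras-induced}, based on a systematic analysis of Jacquet functors of parabolic inductions of modular cuspidal representations.

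For the existence and uniqueness of the non-degenerate subquotient $\st_s(\varrho)$, the first step is to compute the dimension of the Whittaker space of $\pi := \varrho \overrightarrow{\times} \cdots \overrightarrow{\times} \varrho$ with respect to the top partition $\underline \lambda = (sg)$. Using the geometric lemma of Bernstein-Zelevinsky and the fact that $\varrho$ itself is non-degenerate (every irreducible cuspidal representation is generic at its own Whittaker type), an induction on $s$ yields $\dim \hom_{RU}(\pi_{|U}, \psi_{(sg)}) = 1$. Hence $\pi$ admits exactly one irreducible subquotient that is non-degenerate, and this is by definition $\st_s(\varrho)$.

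For the cuspidality criterion, I would compute the Jacquet modules $J_P(\st_s(\varrho))$ as $P$ ranges over the standard maximal parabolics. The geometric lemma expresses $J_P(\pi)$ as a sum of contributions indexed by shuffles of the segment $\varrho\{(1-s)/2\}, \ldots, \varrho\{(s-1)/2\}$. The task is then to track, in the Grothendieck group, which of these contributions occur in the subquotient $\st_s(\varrho)$ as opposed to the other (necessarily degenerate) constituents of $\pi$. When $s=1$, cuspidality is trivial. When $s = m(\varrho)$, the segment exactly closes the Zelevinsky line of $\varrho$, creating enough cancellations in the shuffles to force all proper Jacquet modules of $\st_s(\varrho)$ to vanish; iterating this wrapping procedure $u$ times yields the factor $l^u$, reflecting the mod-$l$ combinatorics of the Zelevinsky line (each wrap around by $l$ copies creates a fresh cuspidal support in the modular setting).

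Finally, exhaustion --- that this list together with the supercuspidals covers all cuspidal representations --- follows from Vign\'eras' classification of supercuspidal supports of modular cuspidal representations: any non-supercuspidal cuspidal $\tau$ has supercuspidal support given by a single Zelevinsky segment based on some irreducible supercuspidal $\varrho'$, and must coincide with the non-degenerate constituent of the corresponding parabolic induction, i.e.\ with $\st_s(\varrho')$ for the appropriate $s$; the criterion above then forces $s$ into the announced arithmetic progression. The hardest step is precisely this cuspidality criterion: identifying exactly the sequence $\{1, m(\varrho), m(\varrho)l, m(\varrho)l^2, \ldots\}$ requires a delicate mod-$l$ bookkeeping of which Jacquet module contributions cancel, a phenomenon with no analogue in characteristic zero where only $s=1$ yields a cuspidal representation.
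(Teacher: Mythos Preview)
The paper does not prove this proposition at all: it is quoted verbatim from Vign\'eras (\cite{vigneras-induced} III.5.14) and followed only by a remark, with no argument given. So there is no ``paper's own proof'' to compare against; you are effectively sketching Vign\'eras' result from scratch.

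That said, your outline is uneven. The first part (uniqueness of the non-degenerate subquotient via multiplicativity of Whittaker dimensions under parabolic induction) is correct and standard. The third part (exhaustion via supercuspidal supports) is also fine once the cuspidality criterion is granted. The weak point is the middle part --- the cuspidality criterion itself. Your explanation in terms of ``the segment closing the Zelevinsky line'' and ``wrapping $l$ times creating a fresh cuspidal support'' is not the actual mechanism, and as written would not lead to a proof. In Vign\'eras the argument passes through type theory: one reduces the question to the representation theory of an affine Hecke algebra (or, essentially equivalently, to modular representations of symmetric groups), and the arithmetic progression $1, m(\varrho), m(\varrho)l, m(\varrho)l^2,\ldots$ reflects the values of $n$ for which the trivial $\bar\Fm_l$-representation of $\mathfrak{S}_n$ is cuspidal in the appropriate sense. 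The factor $m(\varrho)$ accounts for the length of the Zelevinsky line, while the powers of $l$ come from characteristic-$l$ phenomena in the Hecke algebra, not from any geometric ``wrapping'' of segments. Your Jacquet-module-cancellation heuristic does not by itself explain why the progression is geometric with ratio $l$ rather than, say, arithmetic with step $m(\varrho)$.
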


\rem en gÈnÈral, pour $\pi$ une $\bar \Qm_l$-reprÈsentation cuspidale telle que 
$r_l(\pi)=\varrho$, la rÈduction modulo $l$ de $\st_s(\pi)$ est strictement plus grande 
$\st_s(\varrho)$, cf. \cite{boyer-alg} \S 3; 
en revanche, comme cas particulier de la proposition suivante, la multiplicitÈ de 
$\st_s(\varrho)$ y est Ègale ‡ $1$.

\begin{prop} (cf. \cite{vigneras-induced} V.9.2) \label{prop-red-Whittaker} \\
Soit $\pi$ une $\bar \Qm_l$-reprÈsentation irrÈductible entiËre de $GL_d(K)$
de niveau de Whittaker $\underline \lambda$. Parmi les constituants irrÈductible de
la rÈduction modulo $l$ de $\pi$, il y en a exactement un de niveau de Whittaker 
$\underline \lambda$, tous les autres Ètant de niveau strictement infÈrieur.
\end{prop}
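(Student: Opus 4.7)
My approach rests on two standard inputs. First, for any commutative coefficient ring $R$ in which $p$ is invertible, the twisted coinvariants functor $\sigma \mapsto \sigma_{U,\psi_{\underline\lambda}}$ on smooth $R$-representations is exact and commutes with flat base change on $R$. Second, the Rodier--M\oe glin--Waldspurger theorem, in the modular setting of \cite{vigneras-induced}, asserts that for every irreducible smooth representation $\sigma$ (over $\bar\Qm_l$ or over $\bar\Fm_l$) one has $m(\sigma,\underline{\lambda_\sigma}) = 1$, and more precisely that $\{\mu~:~ m(\sigma,\mu) > 0\}$ coincides with the order ideal $\{\mu \leq \underline{\lambda_\sigma}\}$ for the Bruhat order.

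Choosing a stable $\OC_E$-lattice $L \subset \pi$, the exactness of the coinvariants functor yields $(\bar L)_{U,\psi_{\underline\lambda}} \simeq L_{U,\psi_{\underline\lambda}} \otimes_{\OC_E} \bar\Fm_l$. Decomposing the class of $\bar L$ in the Grothendieck group along its Jordan--H\"older filtration then produces the key identity
$$m(\pi,\underline\lambda) = \sum_\tau [r_l(\pi):\tau]\cdot m(\tau,\underline\lambda) \qquad (\ast)$$
valid for every partition $\underline\lambda$ of $d$, the sum running over the isomorphism classes of irreducible $\bar\Fm_l$-constituents $\tau$ of $r_l(\pi)$.

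The proposition then follows in two strokes. Applying $(\ast)$ with $\underline\lambda = \underline{\lambda_\tau}$ for any fixed constituent $\tau$, the right-hand side is at least $m(\tau,\underline{\lambda_\tau}) = 1$, so $m(\pi,\underline{\lambda_\tau}) > 0$, which by maximality of $\underline{\lambda_\pi}$ forces $\underline{\lambda_\tau} \leq \underline{\lambda_\pi}$. Applying $(\ast)$ with $\underline\lambda = \underline{\lambda_\pi}$ and invoking the ideal description above, $m(\tau,\underline{\lambda_\pi}) \neq 0$ iff $\underline{\lambda_\pi} \leq \underline{\lambda_\tau}$, which combined with the first step forces $\underline{\lambda_\tau} = \underline{\lambda_\pi}$ and then $m(\tau,\underline{\lambda_\pi}) = 1$; the identity becomes
$$1 = m(\pi,\underline{\lambda_\pi}) = \sum_{\tau~:~ \underline{\lambda_\tau} = \underline{\lambda_\pi}} [r_l(\pi):\tau],$$
giving exactly one constituent of Whittaker level $\underline{\lambda_\pi}$, appearing with multiplicity one, all others having strictly smaller level.

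The main technical hurdle is the justification of $(\ast)$: one must verify the integral exactness of the functor $\sigma \mapsto \sigma_{U,\psi_{\underline\lambda}}$ and its compatibility with $-\otimes_{\OC_E}\bar\Fm_l$. For the non-degenerate partition $\underline\lambda = (d)$ this is straightforward, since $\psi_{\underline\lambda}$ is then simply a character of $U$ and standard Whittaker theory applies; the degenerate cases, together with Rodier's theorem in the modular setting, are precisely what \cite{vigneras-induced} V.9.2 supplies.
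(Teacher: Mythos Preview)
The paper does not supply its own proof of this proposition: it is stated with the reference \og cf.\ \cite{vigneras-induced} V.9.2\fg{} and used as a black box thereafter. So there is no in-paper argument to compare against.

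Your argument is correct and is essentially the standard one. The crucial technical point you identify --- exactness of $\sigma\mapsto\sigma_{U,\psi_{\underline\lambda}}$ over $\OC_E$ when $p$ is invertible --- is what makes $(\ast)$ hold on the nose (and not just as an inequality): exactness applied to $0\to L\xrightarrow{\varpi} L\to \bar L\to 0$ shows $L_{U,\psi_{\underline\lambda}}$ is $\OC_E$-free of rank $m(\pi,\underline\lambda)$, whence $\dim_{\bar\Fm_l}(\bar L)_{U,\psi_{\underline\lambda}}=m(\pi,\underline\lambda)$, while exactness over $\bar\Fm_l$ along a Jordan--H\"older filtration of $\bar L$ gives the right-hand side. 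One small remark: for the first stroke you only need the definitional fact that $m(\pi,\mu)>0\Rightarrow \mu\leq\underline{\lambda_\pi}$, not the full order-ideal description; the latter is what you invoke (correctly) in the second stroke to force $\underline{\lambda_\tau}=\underline{\lambda_\pi}$ whenever $m(\tau,\underline{\lambda_\pi})\neq 0$.
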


\rem ainsi tout sous-quotient irrÈductible de la rÈduction modulo $l$ de $\speh_s(\pi)$
avec $\pi$ non dÈgÈnÈrÈe est de longueur de Whittaker supÈrieure ou Ègale ‡ $s$.

\begin{nota} \label{nota-varrho}\phantomsection
Pour $\varrho$ une reprÈsentation irrÈductible supercuspidales, on notera pour tout $u \geq 0$,
$\varrho_u$ la reprÈsentation cuspidale $\st_{m(\varrho)l^u}(\varrho)$ de la proposition prÈcÈdente
et on pose $\varrho_{-1}:=\varrho$.
\end{nota}

\begin{prop} (cf. \cite{vigneras-livre} III.5.10) \\ 
Soit $\varrho$ une reprÈsentation irrÈductible cuspidale de $GL_g(K)$; il 
existe alors un relËvement $\pi$ irrÈductible cuspidal de $\varrho$, i.e. tel que
$r_l(\pi)=\varrho$. 
\end{prop}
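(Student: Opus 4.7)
The plan is to use the Bushnell--Kutzko theory of simple types, as extended by Vign\'eras to the $\bar\Fm_l$ setting. First I would invoke the classification result asserting that every irreducible cuspidal $\bar\Fm_l$-representation $\varrho$ of $GL_g(K)$ is obtained by compact induction
$$\varrho \simeq \mathrm{c\text{-}ind}_{J}^{GL_g(K)} \Lambda,$$
from an extended maximal simple type $(J,\Lambda)$, with $J$ an open, compact-modulo-center subgroup of $GL_g(K)$ and $\Lambda$ a finite-dimensional $\bar\Fm_l$-representation of $J$. This reduces the global lifting problem to lifting the representation $\Lambda$ of $J$.

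Next I would construct a lift $\widetilde\Lambda$ of $\Lambda$ to a $\bar\Qm_l$-representation of $J$. The representation $\Lambda$ breaks up according to the stratum data: a character on the center of $J$, a Heisenberg-type piece on a pro-$p$ subgroup, and a cuspidal representation of a finite reductive quotient (together with an extension datum on $J$). Each of these pieces lifts: characters of compact-mod-center groups lift trivially; the Heisenberg piece is uniquely determined by its central character and lifts by the projectivity of the relevant modules; and the cuspidal $\bar\Fm_l$-representation of the finite reductive quotient comes, via Deligne--Lusztig theory, from an integral characteristic $0$ representation with the same central character. Gluing these lifts produces $\widetilde\Lambda$, and one sets $\pi := \mathrm{c\text{-}ind}_{J}^{GL_g(K)} \widetilde\Lambda$.

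I would then verify that $\pi$ is irreducible cuspidal and that $r_l(\pi) = \varrho$. Irreducibility and cuspidality follow from the intertwining criteria for compact induction: the relations
$$\dim_{\bar\Qm_l} \hom_{J \cap gJg^{-1}}(\widetilde\Lambda, {}^g\widetilde\Lambda) \leq 1,$$
with equality only for $g \in J$, are a group-theoretic property of the pair $(J,\Lambda)$ which passes to $(J,\widetilde\Lambda)$ since the Hom spaces are $\OC_E$-free of the same rank before and after tensoring with the residue field. The compatibility of compact induction with reduction modulo $\ell$ gives an embedding $\varrho \hookrightarrow r_l(\pi)$, and Proposition \ref{prop-red-Whittaker} together with a Jacquet-module computation forces equality.

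The main obstacle is the case where $\varrho = \st_s(\varrho_0)$ is cuspidal-but-not-supercuspidal (so $s = m(\varrho_0) l^u > 1$): the lift $\pi$ must then be supercuspidal in characteristic $0$, since every $\bar\Qm_l$-cuspidal representation of $GL_g(K)$ is supercuspidal. One therefore has to match carefully the Vign\'eras-type of such a $\varrho$ with a Bushnell--Kutzko simple type whose compact induction is automatically supercuspidal, and check that no spurious parabolically induced constituents appear when one passes to $\bar\Qm_l$. This matching is precisely the content of Vign\'eras III.5.10 and is where the bulk of the work lies.
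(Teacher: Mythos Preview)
The paper does not give its own proof of this proposition: it is stated as a background result with the citation \cite{vigneras-livre} III.5.10 and nothing more. Your sketch via the Bushnell--Kutzko/Vign\'eras theory of simple types --- exhaust $\varrho$ by a compactly induced extended maximal simple type, lift the type componentwise, re-induce, and check irreducibility/cuspidality through intertwining --- is precisely the line of argument of the cited reference, so you have in effect reconstructed what the paper is quoting. The paper's only further commentary is the remark immediately after the proposition, which describes the set of cuspidal lifts of a given $\varrho$ via the Bushnell--Henniart parametrization (a tamely ramified extension $E/K$ together with a regular $\Delta$-orbit of tame characters of $E^\times$); this is a complementary viewpoint to the type-theoretic one you adopt.

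One small comment on your final paragraph: you are right that the cuspidal-non-supercuspidal case $\varrho=\st_{m(\varrho_0)l^u}(\varrho_0)$ is where the real work lies, but your phrasing ``match the Vign\'eras-type of such a $\varrho$ with a Bushnell--Kutzko simple type whose compact induction is automatically supercuspidal'' slightly understates the subtlety. The point is rather that in this case the extended type $(J,\Lambda)$ one attaches to $\varrho$ has a cuspidal-non-supercuspidal representation of the finite reductive quotient, and its Deligne--Lusztig lift over $\bar\Qm_l$ is genuinely cuspidal (there being no distinction in characteristic $0$); the lift $\pi$ is then automatically supercuspidal by the standard intertwining calculation. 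This is what Vign\'eras's III.5.10 establishes.
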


Donnons quelques prÈcisions sur l'ensemble des relËvements d'une cuspidale
donnÈe. Rappelons, cf. \cite{b-he} \S 5.5, que toute $\bar \Qm_l$-reprÈsentation irrÈductible
cuspidale est bijectivement \og attachÈe \fg{} 
‡ la donnÈe d'une extension sauvagement ramifiÈe $E \subset P$ et ‡
une orbite rÈguliËre de $\Delta \backslash X_1(E)$ o˘:
\begin{itemize}
\item $E$ est une extension modÈrÈment ramifiÈe de $K$ de degrÈ $\leq d$;

\item $X_1(E)$ est l'ensemble des caractËres modÈrÈment ramifiÈs de $E^\times$;

\item $\Delta$ est le groupe de Galois de $E/K$.
\end{itemize}
Si $\chi_E$ est un reprÈsentant d'une telle orbite associÈ ‡ un relËvement cuspidal $\pi$ de
$\varrho$ alors les autres relËvements cuspidaux, ‡ torsion par un caractËre non ramifiÈ prËs,
seront associÈs ‡ $\chi_E \otimes \zeta_l$
o˘ $\zeta_l$ est un caractËre de $E^\times$ d'ordre une puissance de $l$.

\rem en particulier si $l$ est \textit{banal} pour $GL_d(K)$, toute
$\bar \Fm_l$-reprÈsentation irrÈductible cuspidale admet, ‡ torsion prËs, 
un unique relËvement sur $\bar \Qm_l$.

\subsection{ReprÈsentations de $D_{K,d}^\times$}

\begin{defi} Soit $D_{K,d}$ l'algËbre ‡ division centrale sur $K$ d'invariant $1/d$
et d'ordre maximal $\DC_{K,d}$. On identifiera
$$D_{K,h}^\times / \DC_{K,h}^\times \longto \Zm$$ 
au moyen de l'opposÈ de la valuation de la norme rÈduite $\rn$.
\end{defi}

On rappelle que la correspondance de Jacquet-Langlands locale est une bijection $\JL$ entre les reprÈsentations
irrÈductibles admissibles de $D_{K,h}^\times$ et les reprÈsentations irrÈductibles admissibles essentiellement de carrÈ
intÈgrable de $GL_{h}(K)$.

\begin{defi} Pour $\pi$ une $\bar \Qm_l$-reprÈsentation irrÈductible cuspidale de $GL_g(K)$ 
et $t \geq 1$,
$\pi[t]_D$ dÈsignera la reprÈsentation $\JL^{-1}(\st_t(\pi))^\vee$ de $D_{K,tg}^\times$.
\end{defi}

Soit $\tau=\pi[s]_{D}$ une $\bar \Qm_l$-reprÈsentation irrÈductible $l$-entiËre de $D_{K,d}^\times$;
avec les notations de \ref{nota-varrho}, la rÈduction modulo $l$ de $\pi$ est de
la forme $\st_{m(\tau)}(\varrho)$ pour $\varrho$
une $\bar \Fm_l$-reprÈsentation irrÈductible supercuspidale de $GL_e(K)$.
On note alors $\iota$ l'image de $\speh_{s}(\varrho^\vee)$ par la correspondance de Jacquet-Langlands modulaire
dÈfinie par J.-F. Dat au \S 1.2.4 de \cite{dat-jl}. Autrement dit si $\pi_\varrho$ est un relËvement cuspidal
de $\varrho$, i.e. une $\bar \Qm_l$-reprÈsentation entiËre irrÈductible cuspidale de $GL_e(K)$ dont la rÈduction
modulo $l$ est Ègale ‡ $\varrho$, alors $\iota=r_l \bigl ( \pi_\varrho[s]_D \bigr )$.

\begin{prop} \phantomsection \label{prop-red-D} (cf. \cite{dat-jl} proposition 2.3.3)
La rÈduction modulo $l$ de $\tau$ est avec les notations prÈcÈdentes de la forme
$$\iota \{-\frac{m(\tau)-1}{2} \} \oplus \iota \{-\frac{m(\tau)-3}{2} \} \oplus \cdots \oplus \iota \{ \frac{m(\tau)-1}{2} \}$$
o˘ $\iota \{ n \}$ dÈsigne $\iota \otimes q^{-n \val \circ \nrd}$.
\end{prop}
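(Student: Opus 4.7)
This is essentially a restatement of \cite{dat-jl}, Proposition~2.3.3, so my plan is to adapt Dat's argument. The strategy is to compute $r_l(\st_s(\pi))$ on the $GL_d$-side and then transport the answer to $D_{K,d}^\times$ via the contragredient and Dat's modular Jacquet-Langlands correspondence, exploiting the identity $\tau = \JL^{-1}(\st_s(\pi))^\vee$.

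I would first fix a cuspidal lift $\pi_\varrho$ of $\varrho$ on $GL_e(K)$ and write $g = em(\tau)$, $d = sg$. Since $r_l(\pi) = \st_{m(\tau)}(\varrho)$, the parabolic induction $\pi\{ (1-s)/2 \} \times \cdots \times \pi\{ (s-1)/2 \}$ defining $\st_s(\pi)$ reduces mod~$l$ to a representation whose semisimplification can be decomposed via Zelevinsky's classification applied to the merged segment of length $sm(\tau)$ on the supercuspidal $\varrho$. By Proposition \ref{prop-red-Whittaker} the unique non-degenerate constituent is $\st_{sm(\tau)}(\varrho)$, and the remaining constituents are precisely the Speh-type representations $\speh_s(\st_{m(\tau)}(\varrho)) \{ j \}$ for $j$ running over a symmetric set of size $m(\tau)$ around $0$. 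Applying $\JL^{-1}$ and taking contragredients, the non-essentially-square-integrable constituent $\st_{sm(\tau)}(\varrho)$ drops out on the division algebra side, while each Speh-type constituent $\speh_s(\st_{m(\tau)}(\varrho))\{ j \}$ maps to $\iota\{ j \}$ by the defining property of $\iota$ together with the equivariance of Dat's correspondence under unramified twists.

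The main obstacle is to promote this identity in the Grothendieck group of $\bar \Fm_l$-representations of $D_{K,d}^\times$ to an honest direct-sum decomposition. This requires showing that the $\iota\{ j \}$ are pairwise non-isomorphic, which follows from the fact that their cuspidal supports (via Dat's inverse modular JL) differ by non-trivial unramified twists indexed within a window of length $m(\tau)$, so that $\ext^1(\iota\{ j \}, \iota\{ j'\})=0$ for $j\neq j'$. Consequently no non-trivial extensions between the twisted copies can appear in $r_l(\tau)$, forcing the semisimplicity and yielding the claimed direct sum.
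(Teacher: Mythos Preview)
The paper does not give its own proof of this proposition: it is stated with a direct reference to \cite{dat-jl}, proposition 2.3.3, and used as a black box. So there is no ``paper's proof'' to compare against beyond Dat's original argument.

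Your proposal, however, has genuine gaps. First, your description of $r_l(\st_s(\pi))$ is incorrect: it is not true that the constituents are just $\st_{sm(\tau)}(\varrho)$ together with the twists $\speh_s(\st_{m(\tau)}(\varrho))\{j\}$. The mod-$l$ reduction of a generalized Steinberg is in general much richer (see for instance the discussion referenced in the paper around \cite{boyer-alg}), and there are other constituents, including other Speh-type subquotients built from cuspidals in the $\varrho$-line of different lengths. You would need a separate argument to show that only the constituents you list survive under the modular Jacquet--Langlands transfer, and that is essentially the whole content of the proposition. Second, calling $\st_{sm(\tau)}(\varrho)$ ``non-essentially-square-integrable'' is backwards: Steinbergs are exactly the essentially square-integrable representations on the $GL$-side; it is the Speh representations that play the role of images under Dat's modular correspondence.

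Finally, your last paragraph is unnecessary. By definition $r_l(\tau)$ is already the semisimplification of a lattice reduction, so the identity you want is an equality in the Grothendieck group and the $\oplus$ in the statement is automatic; no $\ext^1$-vanishing is required. Dat's actual proof proceeds more directly on the $D_{K,d}^\times$-side (finite-dimensional representations, Brauer characters and the Jacquet--Langlands character relation), which avoids the delicate bookkeeping of constituents of $r_l(\st_s(\pi))$ altogether.
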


\subsection{ReprÈsentations automorphes cohomologiques de $G(\Am)$}
\label{para-defG}

Soient $E/\Qm$ une extension quadratique imaginaire pure, $F^+/\Qm$ une extension
totalement rÈelle dont on fixe un plongement rÈel $\tau:F^+ \hookrightarrow \Rm$;
on pose $F=F^+ E$ le corps CM associÈ. Soit $B$ une algËbre ‡ 
division centrale sur $F$ de dimension $d^2$ telle qu'en toute place $x$ de $F$,
$B_x$ est soit dÈcomposÈe soit une algËbre ‡ division et on suppose $B$ 
munie d'une involution de
seconde espËce $*$ telle que $*_{|F}$ est la conjugaison complexe $c$. Pour
$\beta \in B^{*=-1}$, on note $\sharp_\beta$ l'involution $x \mapsto x^{\sharp_\beta}=\beta x^*
\beta^{-1}$ et $G/\Qm$ le groupe de similitudes, notÈ $G_\tau$ dans \cite{h-t}, dÈfini
pour toute $\Qm$-algËbre $R$ par 
$$
G(R)  \simeq   \{ (\lambda,g) \in R^\times \times (B^{op} \otimes_\Qm R)^\times  \hbox{ tel que } 
gg^{\sharp_\beta}=\lambda \}
$$
avec $B^{op}=B \otimes_{F,c} F$. 
Si $x$ est une place de $\Qm$ dÈcomposÈe $x=yy^c$ dans $E$ alors 
$$G(\Qm_x) \simeq (B_y^{op})^\times \times \Qm_x^\times \simeq \Qm_x^\times \times
\prod_{z_i} (B_{z_i}^{op})^\times,$$
o˘ $x=\prod_i z_i$ dans $F^+$.

\begin{nota} \label{nota-GFv}
Pour $x$ une place de $\Qm$ dÈcomposÈe dans $E$ et $z$ une place de $F^+$ au dessus
de $x$, on notera $G(F_z)$ le facteur $(B_z^{op})^\times$ de $G(\Qm_x)$ et
$G(\Am^z)$ pour $G(\Am)$ auquel on Ùte le facteur $(B_z^{op})^\times$.
De mÍme pour $T$ un ensemble de places de $\Qm$ et $x \in T$, on notera 
$T-\{ z \}$ pour dÈsigner la rÈunion des places de $T$ distinctes de $x$ avec les
places de $F$, autres que $z$, au dessus de $x$.
\end{nota}

\rem pour $x=yy^c$ dÈcomposÈ dans $E$, les places de $F^+$ au dessus de $x$ s'identifie avec les places
de $F$ au dessus de $y$.

Dans \cite{h-t}, les auteurs justifient l'existence d'un $G$ comme ci-dessus tel qu'en outre:
\begin{itemize}
\item si $x$ est une place de $\Qm$ qui n'est pas dÈcomposÈe dans $E$ alors
$G(\Qm_x)$ est quasi-dÈployÈ;

\item les invariants de $G(\Rm)$ sont $(1,d-1)$ pour le plongement $\tau$ et $(0,d)$ pour les
autres. 
\end{itemize}
On suppose aussi que $p$ est dÈcomposÈe
$p=uu^c$ dans $E$ et on note, cf. la remarque ci-dessus, $v=v_1,v_2,\cdots,v_r$, les places de $F$ au dessus de 
$u$ avec $(B_v^{op})^\times \simeq GL_d(F_v)$.
Pour de coupables raisons de commoditÈ, on fixe un isomorphisme $\iota_{l}:\bar \Qm_{l} \simeq \Cm$.

\begin{defi} \label{defi-automorphe}\phantomsection
Soit $\xi$ une $\Cm$-reprÈsentation irrÈductible algÈbrique de dimension finie de $G$. 
Une $\Cm$-reprÈsentation irrÈductible $\Pi_{\oo}$ de $G(\Am_{\oo})$ est dite $\xi$-cohomologique s'il
existe un entier $i$ tel que
$$H^i((\lie G(\Rm)) \otimes_\Rm \Cm,U_\tau,\Pi_\oo \otimes \xi^\vee) \neq (0)$$
o˘ $U_\tau$ est un sous-groupe compact modulo le centre de $G(\Rm)$, maximal, cf. \cite{h-t} p.92. 
On notera $d_\xi^i(\Pi_\oo)$ la dimension de ce groupe de cohomologie.
\end{defi}

\begin{defi}
Soit $\xi$ une $\bar \Qm_l$-reprÈsentation irrÈductible algÈbrique de dimension finie de $G$. 
Une $\bar \Qm_{l}$-reprÈsentation irrÈductible $\Pi^{\oo}$ de $G(\Am^{\oo})$ sera dit automorphe 
$\xi$-cohomologique s'il existe une $\Cm$-reprÈsentation cohomologique $\Pi_\oo$ de $G(\Am_\oo)$ telle que
$\iota_{l}\Bigl ( \Pi^{\oo} \Bigr ) \otimes \Pi_{\oo}$ est une $\Cm$-reprÈsentation automorphe de $G(\Am)$.
\end{defi}

\begin{defi}
(cf. \cite{vigneras-langlands} appendice A) 
\\
Une $\bar \Qm_l$-reprÈsentation $\pi^\oo$ de $G(\Am^\oo)$ est dite \emph{entiËre}
s'il existe un $\bar{\Zm}_l$-module projectif $\Lambda$ muni d'une action de $G(\Am^\oo)$
tel que $\Lambda \otimes_{\bar \Zm_l} \bar \Qm_l \simeq \pi^\oo$ et $\Lambda^K$
est de type fini pour tout sous-groupe compact ouvert $K$ de $G(\Am^\oo)$.
\end{defi}

\rem d'aprËs \cite{vigneras-langlands} \S 3.3, pour toute reprÈsentation automorphe $\pi$ de
$G(\Am)$ triviale ‡ l'infini, sa partie finie $\pi^\oo$ est entiËre.

\section{Rappels sur les systËmes locaux d'Harris-Taylor}

\subsection{VariÈtÈs de Shimura unitaires simples}

Pour tout sous-groupe compact assez petit $U^p$ au sens du bas de la page 90 de \cite{h-t}, 
de $G(\Am^{\oo,p})$ et $m=(m_1,\cdots,m_r) \in \Zm_{\geq 0}^r$,  on pose
\addtocounter{smfthm}{1}
\begin{equation} \label{eq-zp}
U^p(m)=U^p \times \Zm_p^\times \times \prod_{i=1}^r \ker ( \OC_{B_{v_i}}^\times \longto
(\OC_{B_{v_i}}/v_i^{m_i})^\times )
\end{equation}
et on note $\IC$ l'ensemble des sous-groupes compacts assez petits de la forme $U^p(m)$. ¿ la donnÈe de
$G$ et $\IC$, on associe un \textbf{schÈma de Hecke} $X_\IC=(X_I)_{I \in \IC}$ sur $\spec \OC_v$ au sens de 
\cite{boyer-invent2} et on note $\overline{X_\IC}$ sa fibre spÈciale gÈomÈtrique sur $\bar \Fm_p$.

Avec les notations de \cite{boyer-invent2}, pour tout $1 \leq h \leq d$, on dispose de sous-$\bar \Fm_p$-schÈmas 
de Hecke pour $\Gm=G(\Am^\oo)$, fermÈs (resp. localement fermÈs)
notÈs $\overline X_\IC^{\geq h}$ (resp. $\overline X_\IC^{=h}$) de pure dimension $d-h$ et 
on note
$$i^{h}_\IC:\overline X_\IC^{\geq h} \hookrightarrow \overline X_\IC=
\overline X_\IC^{\geq 1}, \qquad
j^{\geq h}_\IC:\overline X_{\IC}^{=h} \hookrightarrow \overline X_\IC^{\geq h}.$$

\rem afin de ne pas multiplier les notations, on notera aussi
$i^h$ pour l'inclusion fermÈe $\overline X_\IC^{\geq h} \hookrightarrow 
\overline X_\IC^{\geq h+1}$.

Pour tout $1 \leq h< d$, les strates
$\overline X_\IC^{=h}$ sont gÈomÈtriquement induites sous l'action du parabolique $P_{h,d}(F_v)$ au sens o˘ il
existe un sous-schÈma fermÈ $\overline X_{\IC,1}^{=h}$ de Hecke pour $\Gm=G(\Am^{\oo,v}) \times P_{h,d}(F_v)$ tel que:
$$\overline X_\IC^{=h} \simeq \overline X_{\IC,1}^{=h} \times^{P_{h,d}(F_v)} GL_d(F_v)$$
On note $\overline X_{\IC,1}^{\geq h}$ l'adhÈrence de $\overline X_{\IC,1}^{=h}$ dans 
$\overline X_{\IC}^{\geq h}$. On rappelle que
$G(\Am^{\oo,v}) \times P_{h,d}(F_v)$ agit ‡ travers son quotient $G(\Am^{\oo,v}) \times \Zm \times GL_{d-h}(F_v)$ donnÈ par l'application $\left ( \begin{array}{cc} g_v^c & * \\ 0 & g_v^{et} \end{array} \right ) \mapsto (v(\det g_v^c),g_v^{et})$.
Par ailleurs l'action d'un ÈlÈment $w_v \in W_v$ est donnÈe par l'action de $-\deg(w_v)$.

\rem les points gÈomÈtriques du $\bar \Fm_p$-schÈma de Hecke $\overline X_{\IC}^{=d}$ de 
dimension nulle sont dits supersinguliers.

\begin{defi} Soit  $H_0/\Qm$ le groupe algÈbrique forme intÈrieure de $G$ telle que
$H_0(\Rm)$ est compact et $H_0(\Am^\oo) \simeq G(\Am^{\oo,p}) \times D_{v,d}^\times \times \prod_{i=2}^r (B_{v_i}^{op})^\times$.
\end{defi}

\subsection{SystËmes locaux d'Harris-Taylor}

Afin de dÈcrire les restrictions aux strates de Newton $\overline X_\IC^{=h}$ 
du complexe des cycles Èvanescents sur $\overline X_\IC$, les auteurs de \cite{h-t} ont construit des systËmes locaux
sur ces strates dont nous rappelons les notations suivant \cite{boyer-invent2}. ¿ toute reprÈsentation irrÈductible
admissible $\tau_v$ de $D_{v,h}^\times$, Harris et Taylor associent 
un systËme local de Hecke $\FC_{\tau_v,\IC,1}$ sur $\bar X_{\IC,1}^{=h}$ 
avec une action de
$G(\Am^{\oo,p}) \times \Qm_p^\times \times P_{h,d}(F_v) \times 
\prod_{i=2}^r (B_{v_i}^{op})^\times  \times \Zm$
qui d'aprËs \cite{h-t} p.136, se factorise par $G^{(h)}(\Am^\oo)/\DC_{F_v,h}^\times$ via
\addtocounter{smfthm}{1}
\begin{equation} \label{eq-action-tordue}
(g^{\oo,p},g_{p,0},c,g_v,g_{v_i},k) \mapsto (g^{p,\oo},g_{p,0}q^{k-v (\det g_v^c)}, 
g_v^{et},g_{v_i}, \delta).
\end{equation}
o˘ $G^{(h)}(\Am^\oo):=G(\Am^{\oo,p}) \times \Qm_p^\times \times GL_{d-h}(F_v) \times
\prod_{i=2}^r (B_{v_i}^{op})^\times \times D_{F_v,h}^\times$,
$g_v=\left ( \begin{array}{cc} g_v^c & * \\ 0 & g_v^{et} \end{array} \right )$ et
$\delta \in D_{v,h}^\times$ est tel que $v(\rn (\delta))=k+v(\det g_v^c)$.
On note $\FC_{\tau_v,\IC}$ le faisceau sur $\bar X_{\IC}^{=h}$ induit associÈ:
$$\FC_{\tau_v,\IC}:= \FC_{\tau_v,\IC,1} \times_{P_{h,d}(F_v)} GL_d(F_v).$$

Pour $\pi_v$ une reprÈsentation irrÈductible cuspidale de $GL_g(F_v)$ et $t$ un entier strictement positif tel que $tg \leq d$,
on introduit suivant \cite{boyer-invent2} la notation $\FC(\pi_v,t)_1$ (resp. $\FC(\pi_v,t)$) pour dÈsigner 
le faisceau de Hecke sur $\bar X_{\IC,1}^{=tg}$ (resp. $\bar X_\IC^{=tg}$)
prÈcÈdemment notÈ $\FC_{\pi_v[t]_D,\IC,1}$ (resp. $\FC_{\pi_v[t]_D,\IC}$).
On utilisera ponctuellement la notation $HT(\pi_v,\Pi_t)$ pour dÈsigner le $W_v$-faisceau pervers de Hecke sur
$\bar X_\IC^{=tg}$ pour $G(\Am^{\oo})$ dÈfini par
$$HT(\pi_v,\Pi_t):=\Bigl ( \FC(\pi_v,t)_1[d-tg] \otimes \Xi^{\frac{tg-d}{2}} \otimes \Pi_t \Bigr ) \times_{P_{h,d}(F_v)} GL_d(F_v),$$
o˘ $\Pi_t$ une reprÈsentation de $GL_{tg}(F_v)$ et o˘ on renvoie ‡ \cite{boyer-invent2} pour une description explicite
des opÈrateurs de Hecke.


\begin{nota} \label{nota-HTGamma}
On Ècrira $HT_{\bar \Zm_l}$ pour dÈsigner un $\bar \Zm_l$-rÈseau
du systËme local d'Harris-Taylor associÈ, que l'on ne souhaite pas prÈciser. 
Dans la suite, on supposera un rÈseau stable de $\pi_v[t]_D$ fixÈ
et pour $\Gamma$ un rÈseau stable de $\Pi_t$, on notera, $HT_\Gamma(\pi_v,\Pi_t)$
le $\bar \Zm_l$-rÈseau du systËme local d'Harris-Taylor associÈ au rÈseau
produit tensoriel stable de $\pi_v[t]_D \otimes \Pi_t$.
\end{nota}

\subsection{Filtrations de stratification entiËres}

Rappelons, cf. \cite{juteau} \S 1.3.1, qu'une thÈorie de torsion sur une catÈgorie abÈlienne
$\AC$ est un couple $(\TC,\FC)$ de sous-catÈgories pleines tel que:
\begin{itemize}
\item pour tout objet $T$ dans $\TC$ et $F$ dans $\FC$, on a
$$\hom_\AC(T,F)=0;$$

\item pour tout objet $A$ de $\AC$, il existe des objets $A_\TC$ et $A_\FC$ de respectivement $\TC$ et $\FC$,
ainsi qu'une suite exacte courte
$$0 \rightarrow A_\TC \longrightarrow A \longrightarrow A_\FC \rightarrow 0.$$
\end{itemize}
Pour $\Om$ un anneau de valuation discrËte et $\varpi$ une uniformisante, dans
une catÈgorie abÈlienne $\AC$ qui est $\Om$-linÈaire, un objet $A$ de $\AC$ est dit \textit{de torsion}
(resp. \textit{libre}, resp. \textit{divisible}) si $\varpi^N 1_A$ est nul pour un certain entier $N$ (resp. $\varpi.1_A$
est un monomorphisme, resp. un Èpimorphisme). On note alors $\TC$ (resp. $\FC$, resp. $\QC$)
la sous-catÈgorie pleine des objets de torsion (resp. libres, resp. divisibles) de $\AC$. Si $\AC$
est noethÈrienne (resp. artinienne) alors $(\TC,\FC)$ (resp. $(\QC,\TC)$) est une thÈorie de torsion sur $\AC$.

Pour $X$ un $\Fm_q$-schÈma de type fini, la catÈgorie dÈrivÈe $D_c^b(X,\Om)$ est munie
de sa $t$-structure perverse $p$ dÈfinie par:
$$\begin{array}{l}
A \in \lexp p D^{\leq 0}(X,\Om)
\Leftrightarrow \forall x \in X,~h^k i_x^* A=0,~\forall k >- \dim \overline{\{ x \} } \\
A \in \lexp p D^{\geq 0}(X,\Om) \Leftrightarrow \forall x \in X,~h^k i_x^! A=0,~\forall k <- \dim \overline{\{ x \} }
\end{array}$$
o˘ $i_x:\spec \kappa(x) \hookrightarrow X$. On note $\lexp p \FP(X,\Om)$ le c{\oe}ur de cette $t$-structure
et $\lexp p h$ les foncteurs cohomologiques: 
c'est une catÈgorie abÈlienne noethÈrienne et $\Om$-linÈaire munie de la thÈorie de torsion $(\TC,\FC)$
avec les notations prÈcÈdentes. On munit alors $D_c^b(X,\Om)$ d'une nouvelle $t$-structure notÈe $p+$ et
dÈfinie par, cf. \cite{boyer-ens} corollaire 1.1.4 et \S 1.4.2:
$$\begin{array}{l}
A \in \lexp {p+} D^{\leq 0}(X,\Om) \Leftrightarrow \forall x \in X,
\left \{ \begin{array}{ll} h^i i_x^* A=0, & \forall i >- \dim \overline{\{ x \} } +1 \\
h^{-\dim \overline{\{ x \} } +1} i_x^* A & \hbox{de torsion} \end{array} \right. \\
A \in \lexp {p+} D^{\geq 0}(X,\Om) \Leftrightarrow \forall x \in X,
\left \{ \begin{array}{ll} h^i i_x^! A=0, & \forall i <- \dim \overline{\{ x \} } \\
h^{-\dim \overline{\{ x \} }} i_x^! A & \hbox{libre} \end{array} \right.
\end{array}$$
dont on notera $\lexp {p+} \FP(X,\Om)$ le c{\oe}ur et $\lexp {p+} h^i$ les foncteurs cohomologiques.
C'est une catÈgorie abÈlienne artinienne et $\Om$-linÈaire. Ces constructions passent ‡ la limite sur
l'anneau $\bar \Zm_l$. 

\rem les objets libres de $\lexp p \FP(X,\Om)$, qui est une catÈgorie abÈlienne $\Om$-linÈaire, sont exactement ceux de $\lexp p \FP(X,\Om) \cap
\lexp {p+} \FP(X,\Om)$: ils sont appelÈs des faisceaux pervers sans torsion.

\medskip

Pour une immersion ouverte
$j:U \hookrightarrow X$, on dispose alors de deux notions de prolongements intermÈdiaires
$\lexp p j_{!*}$ et $\lexp {p+} j_{!*}$. 

\begin{lemm} (cf. \cite{boyer-ens} proposition 1.4.4) \label{lem-h1nul0}
Si $j:U \hookrightarrow X$ est affine alors pour tout faisceau pervers $P$ sans torsion, 
$$j_* P=\lexp p j_* P=\lexp {p+} j_* P \hbox{ et } j_! P=\lexp p j_! P=\lexp {p+} j_! P$$
sont des faisceaux pervers sans torsion.
\end{lemm}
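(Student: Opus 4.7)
The plan is to deduce the two identities from two general facts: Artin's affine vanishing theorem, which asserts the $t$-exactness of $j_!$ and $j_*$ for the perverse $t$-structure when $j$ is an affine open immersion, and the characterization recalled just before the lemma that a perverse sheaf is torsion-free exactly when it lies in $\lexp p \FP \cap \lexp{p+} \FP$. The proof then consists of three steps, carried out in the following order.

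First I would handle the equalities $j_*P = \lexp p j_* P$ and $j_!P = \lexp p j_! P$. For any open immersion, $j_!$ is automatically right $t$-exact and $j_*$ is automatically left $t$-exact for the perverse $t$-structure. The affineness of $j$ upgrades this through Artin's theorem (the $\Omega$-linear variant established in \cite{juteau}): $j_*$ becomes right $t$-exact and $j_!$ becomes left $t$-exact, so both are $t$-exact. Applied to the perverse sheaf $P$, this immediately gives that $Rj_*P$ and $j_!P$ are concentrated in perverse degree $0$.

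Second, I would transfer torsion-freeness from $U$ to $X$. Because $j_!$ and $j_*$ are $t$-exact, they restrict to exact functors $\lexp p \FP(U,\Omega) \to \lexp p \FP(X,\Omega)$, and exact functors preserve monomorphisms. The hypothesis that $P$ is torsion-free means exactly that the arrow $\varpi \cdot 1_P$ is a monomorphism in the abelian $\Omega$-linear category $\lexp p \FP(U,\Omega)$; applying $j_!$ and $j_*$ yields that $\varpi \cdot 1_{j_!P}$ and $\varpi \cdot 1_{j_*P}$ are monomorphisms in $\lexp p \FP(X,\Omega)$. Hence $j_!P$ and $j_*P$ are torsion-free perverse sheaves. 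Invoking the characterization recalled in the remark preceding the lemma, they then lie also in $\lexp{p+} \FP(X,\Omega)$, which gives the remaining equalities $j_*P = \lexp{p+} j_* P$ and $j_!P = \lexp{p+} j_! P$.

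The main obstacle is the first step, namely the $\Omega$-integral Artin vanishing statement, since the classical BBD argument is written for $\bar\Qm_l$-coefficients. One verifies it by the standard reduction to the case of a smooth affine $U$ and a smooth local system $P$, where the desired bound on the stalks of $Rj_*$ (equivalently, on its costalks) follows from Artin's cohomological dimension theorem applied to the affine fibres; the integral nature of the coefficients causes no extra issue since the stalk/costalk inequalities defining $\lexp p D^{\leq 0}$ and $\lexp p D^{\geq 0}$ over $\Omega$ are formally the same as over $\bar\Qm_l$. The torsion-freeness step is then essentially formal given this input.
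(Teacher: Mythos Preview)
Your argument is correct and is the standard one. Note that the paper itself does not give a proof of this lemma: it simply records the statement with the reference \cite{boyer-ens} proposition 1.4.4. So there is nothing in the paper to compare against; your three steps (Artin $t$-exactness for affine $j$, preservation of the monomorphism $\varpi\cdot 1_P$ by an exact functor, and the identification of torsion-free $p$-perverse sheaves with $\lexp p \FP \cap \lexp{p+}\FP$) constitute exactly the natural proof one would expect behind that citation.
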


\rem du triangle distinguÈ (cf. \cite{juteau} 2.42)
$$\lexp p j_! P \longrightarrow \lexp {p+} j_! P \longrightarrow \lexp p i_*
\lexp p h^{-1}_{tors} i^* j_* P [1] \leadsto$$
o˘ pour $A$ un faisceau pervers, $A_{tors}$ dÈsigne son sous-faisceau pervers
de torsion, on en dÈduit que, pour $j$ affine, $\lexp p i_* \lexp p h^{-1}_{tors} i^* j_* P$ est nul.

Dans \cite{boyer-ens}, en utilisant la stratification de Newton, nous donnons l'analogue entier
de 4.3.1 et 5.4.1 de \cite{boyer-invent2} qui s'exprime comme suit.


On note $\Fm(-)$ le foncteur de rÈduction modulaire $\Fm \otimes_\Om^\Lm (-)$; ce dernier ne commute pas
aux foncteurs de troncations. Ainsi d'aprËs \cite{juteau} 2.51, pour $P$
un faisceau pervers sans torsion, on a les triangles distinguÈs suivants
$$\begin{array}{l}
\Fm  \lexp p j^{\geq tg}_! P \rightarrow \lexp p j^{\geq tg}_! \Fm P \rightarrow 
\lexp p h^{-1} \Fm \lexp p i^{tg+1}_* ~ \lexp p
h^{-1}_{tors}i^{tg+1,*}j^{\geq tg}_*P  [2] \leadsto \\
\lexp p j^{\geq tg}_! \Fm P \rightarrow \Fm \lexp {p+} j^{\geq tg}_! P \rightarrow 
\lexp p h^0 \Fm \lexp p i^{tg+1}_* ~ \lexp p
h^{-1}_{tors} i^{tg+1,*} j^{\geq tg}_* P [1] \leadsto 
\end{array}$$
de sorte que $j^{\geq tg}$ Ètant affine, $\lexp p h^{-1}_{tors} i^{tg+1,*} j^{\geq tg}_*P $ est nul et donc
\addtocounter{smfthm}{1}
\begin{equation} \label{eq-h1nul} 
\Fm  \lexp p j^{\geq tg}_! P \simeq \lexp p j^{\geq tg}_! \Fm P \simeq \Fm \lexp {p+} j^{\geq tg}_! P.
\end{equation}

En ce qui concerne les extensions intermÈdiaires, d'aprËs loc. cit. on a
\addtocounter{smfthm}{1}
\begin{equation} \label{eq-Fjp}
\Fm \lexp p j^{\geq tg}_{!*} P \rightarrow \lexp p j^{\geq tg}_{!*} \Fm P \rightarrow 
\lexp p h^{-1} \Fm \lexp p i^{tg+1}_* ~
\lexp p h^0_{tors} i^{tg+1,*} j^{\geq tg}_* P [1] \leadsto 
\end{equation}
\addtocounter{smfthm}{1}
\begin{equation} \label{eq-Fjp+}
\lexp p j^{\geq tg}_{!*} \Fm P \rightarrow \Fm \lexp {p+} j^{\geq tg}_{!*} P \rightarrow 
\lexp p h^0 \Fm \lexp p i^{tg+1}_* ~ \lexp p h^0_{tors} i^{tg+1,*}j^{\geq tg}_* P \leadsto 
\end{equation}

\section{ComplÈments sur la cohomologie}

\subsection{Notations dans les groupes de Grothendieck}

Dans la suite on fixe une reprÈsentation complexe irrÈductible $\xi$ de dimension finie de $G$
 ce qui d'aprËs \cite{h-t} p.96, fournit un systËme local $\LC_\xi$.

\begin{defi} 
…tant donnÈ un faisceau $\FC$ sur $\bar X_\IC$, on notera $\FC_\xi$ le faisceau $\FC \otimes \LC_\xi$.
\end{defi}

\rem d'aprËs \cite{h-t} p.98 et p.150, $\LC_\xi$ est un $W_v$-faisceau de Hecke 
pour $G(\Am^\oo)$ o˘ $g^\oo$ agit par $\xi(g_l)$ et $\sigma \in W_v$ par $\Xi^{-w(\xi)/2}$.

Pour un $W_v$-faisceau pervers de Hecke $P$ sur $\bar X_\IC$,
$[H^*(P_\xi)]$
dÈsignera l'image de $\sum_i (-1)^i H^i(\bar X_\IC,P \otimes \LC_\xi)$ dans le groupe
de Grothendieck des reprÈsentations admissibles de $G(\Am^{\oo}) \times W_v$.
Pour $\groth$ le groupe de Grothendieck d'un groupe de la forme $G(\Am^{\oo,v}) \times \tilde
G$, on notera $\groth\{ \Pi^{\oo,v} \}$ le sous-groupe facteur direct de $\groth$ engendrÈ par les irrÈductibles de la forme $\Pi^{\oo,v} \otimes \sigma$ o˘ $\sigma$ est une reprÈsentation irrÈductible quelconque de $\tilde G$. On notera alors
$$[H^*(P_\xi)]\{ \Pi^{\oo,v} \}$$
la projection de $[H^*(P_\xi)]$ sur ce facteur direct.

\rem on rappelle que $G(\Qm_p) \simeq \Qm_p^\times \times
GL_d(F_v) \times \prod_{i=2}^r (B_{v_i}^{op})^\times$. Pour $\Pi$ une reprÈsentation de
$G(\Am)$, sa composante pour le facteur de similitude $\Qm_p^\times$, sera notÈe
comme dans \cite{h-t}, $\Pi_{p,0}$. Comme tous les compacts de $\IC$ contiennent le
facteur $\Zm_p^\times$, les reprÈsentations $\Pi$ qui vont intervenir par la suite, dans les diffÈrents groupes
de cohomologie, devront toutes vÈrifier que $(\Pi_{p,0})_{|\Zm_p^\times}=1$.

\subsection{Cohomologie des faisceaux pervers d'Harris-Taylor}

Soient
\begin{itemize}
\item $\Pi$ une reprÈsentation irrÈductible admissible de $G(\Am)$,
$m(\Pi)$ sa multiplicitÈ dans l'espace des formes automorphes et

\item $\pi_v$ une reprÈsentation irrÈductible cuspidale de $GL_g(F_v)$.
\end{itemize}
Nous allons rappeler d'aprËs \cite{boyer-compositio}, quels sont les couples $(r,i)$ tels que 
$$[H^i(\lexp p j^{\geq rg}_{!*} \FC_{\bar \Qm_l,\xi}(\pi_v,r)_{1})]\{ \Pi^{\oo,v} \} \neq (0).$$
On pourrait calculer explicitement l'image correspondante dans le groupe
de Grothendieck des $GL_{d-tg}(F_v) \times \Zm$-reprÈsentations mais nous nous 
contenterons d'une description trËs sommaire. Rappelons que
d'aprËs \cite{boyer-compositio} \S 3.6, cette description ne dÈpend
que de $m(\Pi)$, $\Pi_\oo$ et de la composante locale $\Pi_v$ de $\Pi$ en $v$.

\rem rappelons que l'action de $\sigma \in W_v$ sur 
$[H^i(\lexp p j^{\geq rg}_{!*} \FC_{\bar \Qm_l,\xi}(\pi_v,r)_{1})]\{ \Pi^{\oo,v} \}$
est donnÈe par celle de $\deg \sigma \in \Zm$ composÈe avec celle de 
$\Pi_{p,0}(\art^{-1} (\sigma))$ o˘ $\art^{-1}:W_v^{ab} \simeq F_v^\times$ est
l'isomorphisme d'Artin et $\deg$, l'application composÈe de $\art^{-1}$ avec la valuation.

\begin{lemm} \label{lem-compo-locale}
Soit $x$ une place de $\Qm$ dÈcomposÈe $x=yy^c$ dans $E$ et soit $z$
une place de $F$ au dessus de $y$ telle que, avec la notation \ref{nota-GFv},
$G(F_z) := (B_z^{op})^\times \simeq GL_d(F_z)$.
Pour $\Pi$ une reprÈsentation automorphe irrÈductible admissible cohomologique
de $G(\Am)$, sa composante locale $\Pi_z$, au sens de \ref{nota-GFv},
est de la forme $\speh_s(\pi_z)$ pour $\pi_z$ une reprÈsentation 
irrÈductible non dÈgÈnÈrÈe et $s$ un entier $\geq 1$ qui ne dÈpend
que de $\Pi$ et non de la place $z$ comme ci-dessus.
\end{lemm}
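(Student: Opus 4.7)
The plan is to reduce the statement to the M{\oe}glin--Waldspurger classification of the discrete automorphic spectrum of $GL_d$ over $F$ via base change. First, I would invoke the stable base change from the unitary similitude group $G$ to $GL_d/F$: since $\Pi$ is $\xi$-cohomological, the base change theory for unitary similitude groups (after Labesse, Shin, Morel, and as already used extensively in \cite{h-t}) provides a discrete automorphic representation $\Pi^{BC}$ of $GL_d$ over the ad\`eles of $F$, whose local component at any finite place $z$ of $F$ lying above a rational prime $x = yy^c$ split in $E$ and satisfying $G(F_z) \simeq GL_d(F_z)$ coincides, under this identification, with $\Pi_z$.

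Next, I apply the M{\oe}glin--Waldspurger theorem: every such discrete automorphic representation of $GL_d$ is of the form $\speh_s(\pi)$ for a unique divisor $s$ of $d$ and a unique cuspidal automorphic representation $\pi$ of $GL_{d/s}$ over $F$. The integer $s$ is therefore a global invariant of $\Pi^{BC}$, and hence of $\Pi$, independent of the place $z$. Since the formation of Speh representations commutes with localization, taking local components at $z$ yields $\Pi_z \simeq \speh_s(\pi_z)$ where $\pi_z$ is the local component of $\pi$ at $z$.

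It then remains to observe that $\pi_z$ is non-degenerate: being cuspidal, $\pi$ is globally generic (its Whittaker model is realized by the Fourier expansion of cusp forms on $GL_{d/s}$, after Shalika), so each of its local components is non-degenerate in the sense of the definition following Proposition-D\'efinition of the Whittaker level.

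The main technical point, on which everything hinges, is the existence of the base change $\Pi^{BC}$ together with the local--global compatibility at split places; this is precisely the setting of \cite{h-t}, and the remainder of the argument then follows immediately from the M{\oe}glin--Waldspurger classification and the genericity of cuspidal automorphic representations of $GL_n$.
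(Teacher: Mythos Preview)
Your proposal is correct and follows essentially the same route as the paper's proof: base change to $GL_d(\Am_F)$, then M{\oe}glin--Waldspurger, then localization. The only cosmetic difference is that the paper makes the intermediate step through $B^\times$ explicit (citing \cite{h-t} VI.2.1 and then the global Jacquet--Langlands of \cite{badu}), and writes out the local component of the cuspidal $\pi$ as $\st_{t_1}(\pi_{1,z}) \times \cdots \times \st_{t_u}(\pi_{u,z})$ rather than invoking Shalika's genericity argument; the content is the same.
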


\begin{proof}
D'aprËs \cite{h-t} thÈorËme VI.2.1, une reprÈsentation automorphe de $G(\Am)$
est obtenue par changement de base ‡ partir d'une reprÈsentation automorphe de
$B^\times$, laquelle d'aprËs la correspondance de Langlands globale,
cf. \cite{badu} pour le cas gÈnÈral, est associÈe ‡ une sÈrie discrËte de $GL_d(\Am_F)$.
D'aprËs la classification des sÈries discrËtes de $GL_d(\Am_F)$ de \cite{m-w}, une
telle sÈrie discrËte est 
$$\speh_{s}(\pi):=\pi\{ \frac{1-s}{2} \} \boxplus \pi \{ \frac{3-s}{2} \} \boxplus \cdots \boxplus 
\pi \{ \frac{s-1}{2} \}$$ 
pour $\pi$ une reprÈsentation irrÈductible admissible cuspidale de $GL_{d/s}(\Am_F)$ et 
o˘ la notation $\boxplus$ correspond du cÙtÈ galoisien ‡ la somme directe.
La composante locale d'une telle reprÈsentation automorphe en une place $z$ est alors
$$\speh_{s}(\st_{t_1}(\pi_{1,z}))  \times \cdots \times \speh_{s}(\st_{t_u}(\pi_{u,z})) 
\simeq \speh_s \Bigl ( \st_{t_1}(\pi_{1,z}) \times \cdots \times \st_{t_u}(\pi_{u,z}) \Bigr )$$
o˘ les $\pi_{i,z}$ sont irrÈductibles cuspidales.
\end{proof}

\begin{coro} \label{coro-di} (cf. \cite{V-Z}) 
Soient 
\begin{itemize}
\item $x$ une place de $\Qm$ dÈcomposÈe
$x=yy^c$ dans $E$ et $z$ une place au dessus-de $y$ telle que $G(F_z) \simeq GL_d(F_z)$ et

\item $\Pi$ une reprÈsentation irrÈductible automorphe admissible de $G(\Am)$ telle que
sa composante locale $\Pi_z$ en $z$ soit de la forme $\speh_s(\pi_z)$ pour $\pi_z$
une reprÈsentation irrÈductible non dÈgÈnÈrÈe.
\end{itemize}
Alors les $d^i_\xi(\Pi_\oo)$ 
sont nuls pour $|i| \geq s$ ou pour $i \equiv s \mod 2$ et sinon ils sont tous Ègaux.
\end{coro}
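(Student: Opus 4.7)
The plan is to reduce the computation to a purely archimedean problem about a cohomological representation and then invoke the Vogan--Zuckerman classification.

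First, I would extend the argument of Lemma \ref{lem-compo-locale} to an archimedean place. That lemma uses the base change of \cite{h-t} Theorem VI.2.1 together with the Moeglin--Waldspurger classification to realize $\Pi$ as the descent of an automorphic representation $\tilde \Pi$ of $GL_d(\Am_F)$ of isobaric type $\speh_s(\pi)$, where $\pi$ is cuspidal on $GL_{d/s}(\Am_F)$. In particular, $s$ is a genuine invariant of $\Pi$, independent of the auxiliary split place $z$. Specialized at an archimedean place of $F$, this forces the transfer of $\Pi_\infty$ to $GL_d$ at infinity to be a Speh representation $\speh_s(\pi_\infty)$. Pulling this back through the archimedean Jacquet--Langlands correspondence to $G(\Rm)$ (compact at all but one factor, $U(1,d-1)$ at $\tau$) produces an archimedean parameter whose theta-stable parabolic $\mathfrak{q}$ has Levi of block type $(d/s,\ldots,d/s)$ with $s$ blocks.

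Second, since $\Pi$ is $\xi$-cohomological, Vogan--Zuckerman \cite{V-Z} identifies $\Pi_\infty$ with a module $A_{\mathfrak{q}}(\lambda)$ attached to this $\mathfrak{q}$ and to a weight $\lambda$ matched to $\xi$. The $(\lie G(\Rm)\otimes_\Rm \Cm, U_\tau)$-cohomology of such a module is then computed by Kostant's theorem on Lie algebra cohomology of the nilpotent radical of $\mathfrak{q}$. For a Speh parameter with $s$ blocks, the output is that the cohomology is supported on an arithmetic progression of step $2$ contained in an interval of length $s$ centered at the middle degree, and that the dimension is constant on this support (it is a fixed multiplicity of $U_\tau$-types extracted from the Kostant decomposition). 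The symmetry around the middle degree is encoded by Poincar\'e duality together with the self-contragredience properties of Speh representations.

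After performing the standard shift that places the middle degree at $i=0$, this translates exactly into the statement: $d^i_\xi(\Pi_\infty)=0$ for $|i|\geq s$ or $i\equiv s\pmod 2$, and all non-zero values coincide. The main obstacle I expect lies in the first step, namely the careful matching of normalizations when transporting the global Speh parameter $s$ through base change and archimedean Jacquet--Langlands to a Vogan--Zuckerman parameter for $\Pi_\infty$; in particular one has to track the half-integer shifts built into the definition of $\speh_s$ and of the cohomological indexing. Once this bookkeeping is fixed, the rest of the argument is a routine application of Vogan--Zuckerman plus Kostant combinatorics.
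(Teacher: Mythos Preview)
Your proposal is correct and follows exactly the route the paper indicates: the paper gives no proof beyond the citation ``(cf.\ \cite{V-Z})'' placed after Lemma~\ref{lem-compo-locale}, so the intended argument is precisely to extract the global Speh parameter $s$ from that lemma and then invoke the Vogan--Zuckerman description of $(\mathfrak g,K)$-cohomology for the resulting $A_{\mathfrak q}(\lambda)$. Your expansion of this into the two steps (transport of the Speh parameter to the archimedean side via base change, then the Kostant-type computation) is a faithful unpacking of what the citation is meant to cover.
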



\begin{nota} \label{nota-di}
Dans la suite nous noterons simplement $d_\xi(\Pi_\oo)$ pour la valeur commune non nulle
des $d^i_\xi(\Pi_\oo)$.
\end{nota}

\begin{defi} \label{defi-red}
Pour $1 \leq h \leq d$ et $\tau_v$ une reprÈsentation irrÈductible admissible de $D_{v,h}^\times$, on introduit d'aprËs 
\cite{h-t} V.5, 
$$\begin{array}{rccl}
R_{\tau_v}: & \groth \Bigl ( GL_{h}(F_v) \Bigr ) & \longto & \groth \Bigl (F_v^\times \Bigr ) \\
& \alpha & \mapsto & \mathrm{vol} (D_{v,tg}^\times/F_v^\times)^{-1} \sum_\psi \tr \alpha
(\varphi_{\tau_v \otimes \psi^\vee}) \psi ,
\end{array}$$
o˘ $\psi$ dÈcrit les caractËres de $F_v^\times$ et $\varphi_{\pi_v[t]_D}$ est, d'aprËs Deligne, Kazhdan et VignÈras,
un pseudo-coefficient pour $\st_t(\pi_v)$, ainsi que
$$\red_{\tau_v}: \groth \Bigl ( GL_d(F_v) \Bigr ) \longto \groth \Bigl ( F_v^\times \times GL_{d-h}
(F_v) \Bigr ) $$
dÈfini comme la composÈe de
$$\begin{array}{c}
\groth \Bigl ( GL_d(F_v) \Bigr ) \longto \groth \Bigl ( GL_{h}(F_v) \times GL_{d-h}(F_v) \Bigr ) \\
~ [ \Pi ] \mapsto [ J_{P_{h,d}^{op}}(\Pi) \otimes \delta_{P_{h,d}}^{1/2} ]
\end{array}$$
avec $R_{\tau_v} \otimes \Id$.
\end{defi}

\rem pour $\tau_v=\pi_v[t]_D$,
chacun des $\mathrm{vol} (D_{v,tg}^\times/F_v^\times)^{-1} \tr \alpha (\varphi_{\tau_v \otimes \psi^\vee})$
est nul si $\alpha$ n'est pas un sous-quotient irrÈductible de $\pi'_v \{ \frac{1-t}{2} \} \times
\cdots \times \pi'_v \{ \frac{t-1}{2} \}$ avec $\pi'_v \simeq \pi_v \otimes \psi \circ \det$. 
En ce qui concerne les sous-quotients irrÈductibles de cette induite,
ils sont d'aprËs Zelevinsky en bijection avec les orientations du graphe linÈaire dont les sommets
sont les entiers de $1$ ‡ $t$, les sommets reliant $k$ ‡ $k+1$ pour tout $k=1,\cdots,t-1$.
Pour un tel sous-quotient $\alpha$, 
$\mathrm{vol} (D_{v,tg}^\times/F_v^\times)^{-1} \tr \alpha (\varphi_{\tau_v \otimes \psi^\vee})$ 
est un signe qui ne dÈpend que du graphe et pas de $\pi'_v$, cf. par exemple la formule
donnÈe avant (1.5.3) dans \cite{boyer-compositio}.

\begin{prop} \label{prop-coho-gen} 
Soit $\Pi^{\oo,v}$ une reprÈsentation irrÈductible de $G(\Am^{\oo,v})$,
pour tout $(r,i)$, dans le groupe de Grothendieck des reprÈsentations
de $G(\Am^{\oo,v}) \times GL_{d-rg}(F_v) \times \Zm$, 
$[H^i(\lexp p j^{\geq rg}_{!*} \FC_{\bar \Qm_l,\xi}(\pi_v,r)_{1}[d-rg])]\{ \Pi^{\oo,v} \}$
est  Ègal ‡
$$e_{\pi_v} \Bigl ( \frac{\sharp \ker^1(\Qm,G)}{d} \sum_{\Pi' \in \UC_G(\Pi^{\oo,v})} 
m(\Pi') d_\xi(\Pi'_\oo) \Bigr ) R_{\pi_v}(r,i)(\Pi_v) $$
o˘  
\begin{itemize}
\item $\UC_G(\Pi^{\oo,v})$ dÈsigne l'ensemble des reprÈsentations 
irrÈductibles automorphes $\Pi'$ de $G(\Am)$ telles que $(\Pi')^{\oo,v} \simeq \Pi^{\oo,v}$;

\item $\Pi_v$ est la composante locale commune ‡ tous les $\Pi' \in \UC_G(\Pi^{\oo,v})$
tels que $d_\xi(\Pi'_\oo) \neq 0$, cf. le corollaire VI.2.2 de \cite{h-t};

\item $R_{\pi_v}(r,i)(\Pi_v)$ est une somme de reprÈsentations de $GL_{d-rg}(F_v) \times \Zm$ 
dÈfinie, pour 
$\Pi_v \simeq  \speh_s(\st_{t_1}(\pi_{1,v})) \times \cdots \times  \speh_s(\st_{t_u}(\pi_{u,v}))$,  
o˘ les $\pi_{i,v}$ sont des reprÈsentations irrÈductibles cuspidales de $GL_{g_i}(F_v)$,
par la formule
$$R_{\pi_v}(r,i)(\Pi_v)=\sum_{k:~\pi_{k,v} \sim_i \pi_{v}} 
m_{s,t_k}(r,i) R_{\pi_{v}}(s,t_k)(r,i)(\Pi_v,k) \otimes \Bigl ( \xi_k \otimes \Xi^{i/2} \Bigr )$$ o˘:
\begin{itemize}
\item les $\xi_k$ sont tels que $\pi_{k,v} \simeq \pi_v \otimes \xi_k \circ \val \circ \det$;

\item $R_{\pi_{v}}(s,t_k)(r,i)(\Pi_v,k) $ s'Ècrit comme suit
\addtocounter{smfthm}{1}
\begin{multline*} 
R_{\pi_{v}}(s,t_k)(r,i)(\Pi_v,k) :=\speh_s(\st_{t_1}(\pi_{1,v})) \times \cdots \times  \speh_s(\st_{t_{k-1}}(\pi_{k-1,v})) \\ 
\times R_{\pi_{k,v}}(s,t_k)(r,i) \times \\ 
\speh_s(\st_{t_{k+1}}(\pi_{k+1,v})) \times \cdots \times  \speh_s(\st_{t_u}(\pi_{u,v})).
\end{multline*}

\item $R_{\pi_{k,v}}(s,t_k)(r,i)$ est une reprÈsentation de $GL_{d-rg}(F_v)$ qui s'obtient comme expliquÈ ci-aprËs.

\item $m_{s,t}(r,i) \in \{ 0, 1 \}$ est dÈfini ci-aprËs.
\end{itemize}
\end{itemize}
\end{prop}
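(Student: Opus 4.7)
The plan is to deduce the formula from the main computation of \cite{boyer-compositio}, adapted to the perverse sheaves $\lexp p j^{\geq rg}_{!*}\FC_{\bar \Qm_l,\xi}(\pi_v,r)_{1}[d-rg]$ and reorganized so as to isolate the projection on $\{ \Pi^{\oo,v} \}$. First I would use the $l$-adic Matsushima formula of \cite{h-t} to express the cohomology of $\overline X_\IC$ with coefficients in $\LC_\xi$ as a sum over automorphic representations $\Pi'$ of $G(\Am)$, each contributing with multiplicity $m(\Pi')\, d_\xi(\Pi'_\oo)$ times its finite part $(\Pi')^\oo$, the external factor $\sharp \ker^1(\Qm,G)/d$ coming from Kottwitz's counting of connected components of the Shimura variety. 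Projecting onto the $\Pi^{\oo,v}$-isotypic direct summand restricts the sum to $\UC_G(\Pi^{\oo,v})$ and freezes the local component at $v$ to a single $\Pi_v$, by the strong multiplicity one consequence drawn in \cite{h-t} VI.2.2 from the Langlands/Jacquet--Langlands correspondences.

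Next I would use the filtration of the nearby cycles sheaf by intermediate extensions of Harris--Taylor local systems of \cite{boyer-invent2}, together with the pseudo-coefficient formalism of Deligne--Kazhdan--Vign\'eras, to identify the $\Pi^{\oo,v}$-part of $H^i(\lexp p j^{\geq rg}_{!*} \FC_{\bar \Qm_l,\xi}(\pi_v,r)_1[d-rg])$ as a sum of traces of $\Pi_v$ against pseudo-coefficients $\varphi_{\pi_v[r]_D \otimes \psi^\vee}$. This is precisely the content of the operator $\red_{\pi_v[r]_D}$ of Definition \ref{defi-red}, so the $\Pi_v$-contribution reduces to $R_{\pi_v}(\Pi_v^{J_{P_{rg,d}^{op}}})$ twisted appropriately; the factor $e_{\pi_v}$ appears because inertial twists by unramified characters identify $\pi_v$ with itself $e_{\pi_v}$ times when summing over $\psi$.

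To reach the explicit shape of $R_{\pi_v}(r,i)(\Pi_v)$, I would combine the Leibniz rule for the Jacquet functor applied to an induced product $\speh_s(\st_{t_1}(\pi_{1,v})) \times \cdots \times \speh_s(\st_{t_u}(\pi_{u,v}))$ with the Zelevinsky description, recalled in the remark after Definition \ref{defi-red}, of the irreducible subquotients of $\pi'_v\{\tfrac{1-t}{2}\} \times \cdots \times \pi'_v\{\tfrac{t-1}{2}\}$ in terms of orientations of a linear graph. The pseudo-coefficient vanishes on every $\pi_{k,v}$ that is not inertially equivalent to $\pi_v$, which yields the restriction $\pi_{k,v} \sim^i \pi_v$ indexing the sum; the twist $\xi_k \otimes \Xi^{i/2}$ keeps track respectively of the unramified character realising $\pi_{k,v} \simeq \pi_v \otimes \xi_k \circ \val \circ \det$ and of the Tate twist coming from the shift $[d-rg]$ in the definition of $HT(\pi_v,\Pi_r)$. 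The binary coefficient $m_{s,t_k}(r,i) \in \{0,1\}$ records, for each cohomological degree $i$, whether the corresponding Zelevinsky orientation contributes, and is extracted from the weight filtration on the $\speh_s$-factor.

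The main obstacle is the bookkeeping: matching the normalisations (shifts, Tate twists $\Xi^{i/2}$, and the $\delta_P^{1/2}$ correction in the Jacquet functor) between the geometric side (perverse cohomology of $\lexp p j^{\geq rg}_{!*} HT(\pi_v,\cdot)$) and the automorphic side (Jacquet module of $\Pi_v$), and simultaneously tracking the $W_v$-action induced on both sides via the map $\art^{-1}$. All the delicate counting for these normalisations has already been carried out in \cite{boyer-compositio} \S 3, and the present statement is essentially a projection-friendly reformulation of that computation; the proof therefore mainly consists in verifying that the repackaging into the factors $e_{\pi_v}$, $R_{\pi_v}(r,i)(\Pi_v)$ and $m_{s,t}(r,i)$ is consistent with the decomposition recalled in Lemma \ref{lem-compo-locale} and Corollary \ref{coro-di}.
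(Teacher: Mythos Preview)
Your overall architecture---Matsushima, projection onto $\{\Pi^{\oo,v}\}$, the operator $\red_{\pi_v[r]_D}$, and the Leibniz rule for the Jacquet functor (lemma 1.5.5 of \cite{boyer-compositio})---matches the paper's, and the appearance of $e_{\pi_v}$, $\sharp\ker^1(\Qm,G)/d$, and the restriction to $\pi_{k,v}\sim^i\pi_v$ is correctly explained.

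There is however a real gap at the crucial step: the pseudo-coefficient formalism and Theorem~V.6.1 of \cite{h-t} compute only the \emph{alternating sum} $\sum_i(-1)^i[H^i(\lexp p j^{\geq rg}_{!}\FC_{\bar\Qm_l,\xi}(\pi_v,r)_1[d-rg])]$ of the cohomology of the \emph{extension by zero}, not the individual $H^i$ of the \emph{intermediate extension}. Your sentence ``extracted from the weight filtration on the $\speh_s$-factor'' gestures in the right direction but does not supply the mechanism. The paper proceeds in two steps you omit: first the Grothendieck-group identity (\ref{eq-se-2}) from \cite{boyer-compositio}~2.6.3,
\[
i^{tg}_*\, j^{\geq tg}_{!*} HT(\pi_v,\Pi_t)=\sum_{r=0}^{s-t}(-1)^r\, i^{(t+r)g}_*\, j^{\geq(t+r)g}_{!}HT\bigl(\pi_v,\Pi_t\overrightarrow{\times}[\overrightarrow{r-1}]_{\pi_v}\bigr)(r/2),
\]
which expresses the intermediate extension as an alternating sum of extensions by zero with prescribed Tate twists; second, \emph{purity} of $\lexp p j^{\geq tg}_{!*}$, which forces each Frobenius weight to live in a single cohomological degree and thereby lets one read off each $H^i$ from the alternating sum. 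It is this combination---identity (\ref{eq-se-2}) plus purity, invoking proposition~3.6.1 of \cite{boyer-compositio}---that produces the combinatorics encoded in $m_{s,t}(r,i)$, not a direct analysis of Zelevinsky orientations degree by degree. Without making this explicit, your argument computes the right Euler characteristic but does not isolate $H^i$.
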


La reprÈsentation $R_{\pi_{k,v}}(s,t_k)(r,i)$ est calculÈe comme suit: on applique le foncteur de Jacquet
$J_{P^{op}_{rg,d}}$ ‡ $\speh_s(\st_{t_k}(\pi_{k,v}))$ qui s'Ècrit, ‡ la Zelevinsky, comme une somme 
$$\sum \langle a_1 \rangle \otimes \langle a_2 \rangle$$ 
o˘ $a_1,a_2$ sont des multisegments dans la droite de Zelevinsky de $\pi_v$; le calcul explicite est donnÈ 
au corollaire 1.5.6 de \cite{boyer-compositio} avec la notation 1.5.7. On considËre alors la somme
$$\sum R_{\pi_v[r]_D} \Bigl ( \langle a_1 \rangle \Bigr ) a_2=\sum_\psi  \epsilon_\psi \psi \otimes \Pi_\psi \in 
\groth \Bigl ( F_v^\times \times GL_{(st-r)g}(F_v) \Bigr )$$
o˘ $\psi$ dÈcrit les caractËres de $F_v^\times$ et $\epsilon_\psi \in \{ -1,1 \}$.

\rem dans la formule prÈcÈdente, o˘ $\tau_v=\pi_v[r]_D$, 
les $\psi$ tels que $\Pi_\psi$ sont non nuls, sont de la forme $|-|^{k/2}$ avec $k \in \Zm$.

D'aprËs \cite{h-t} thÈorËme V.6.1, cette formule permet de calculer la somme
alternÈe des $[H^i(\lexp p j^{\geq rg}_{!} \FC_{\bar \Qm_l,\xi}(\pi_v,r)_{1}[d-rg])]\{ \Pi^{\oo,v} \}$.
Partant de l'ÈgalitÈ, cf. \cite{boyer-compositio} 2.6.3
\addtocounter{smfthm}{1}
\begin{equation} \label{eq-se-2}
i^{tg}_{\IC,*} j^{\geq tg}_{\IC,!*} HT(\pi_v,\Pi_t)= \sum_{r=0}^{s-t} (-1)^r i^{(t+r)g}_{\IC,*} j^{\geq (t+r)g}_{\IC,!}
HT(\pi_v,\Pi_t \overrightarrow{\times} [\overrightarrow{r-1}]_{\pi_v}) (r/2)
\end{equation}
par puretÈ, le calcul de chacun des $H^i (j^{\geq tg}_{\IC,!*} HT(\pi_v,\Pi_t))$ se dÈduit du calcul de la somme
alternÈe des groupes de cohomologie de tous les $j^{\geq (t+r)g}_{\IC,!}
HT(\pi_v,\Pi_t \overrightarrow{\times} [\overrightarrow{r-1}]_{\pi_v}) (r/2)$.

D'aprËs le lemme 1.5.5 de 
\cite{boyer-compositio}, les applications $\red_{\tau_v}$ sont multiplicatives au sens o˘
$$\red_{\tau_v} \Bigl ( \pi_1 \times \cdots \times \pi_r \Bigr ) = \sum_{i=1}^r \pi_1 \times \cdots \times \pi_{i-1} \times
( \red_{\tau_v} \pi_i ) \times \pi_{i+1} \times \cdots \times \pi_r.$$
Ainsi, par \og superposition \fg, d'aprËs la proposition 3.6.1 de \cite{boyer-compositio}, on a 
$$R_{\pi_{k,v}}(s,t_k)(r,i)=\Pi_{|-|^{-i/2}}.$$
Les conditions sur $i$ dans la proposition 3.6.1 de \cite{boyer-compositio} sont intÈgrÈes dans les
coefficients $m_{s,t}(r,i)$ dÈcrits ci-aprËs.

\rem notons que
pour $\pi_v$ irrÈductible cuspidale entiËre, il rÈsulte de la description prÈcÈdente que $R_{\pi_v}(s,t)(r,i)$
est entiËre et sa rÈduction modulo $l$ ne dÈpend que $(s,t,r,i)$ et de $r_l(\pi_v)$. 
En effet considÈrons deux reprÈsentations entiËres irrÈductibles cuspidales $\pi_v$ et $\pi'_v$ de mÍme rÈduction 
modulo $l$.
Le foncteur de Jacquet $J_{P^{op}_{rg,d}}$ appliquÈ ‡ $R_{\pi_v}(s,t)(r,i)$ s'Ècrit comme prÈcÈdemment
$\sum \langle a_1 \rangle  \otimes \langle a_2 \rangle$ o˘ $a_1,a_2$ sont des multisegments dans la droite de 
Zelevinsky de $\pi_v$. Si dans chacun de ces multisegments, on remplace les $\pi_v \{ \cdots \}$ par les 
$\pi'_v \{ \cdots \}$, le rÈsultat obtenu est
$$\sum \langle a'_1 \rangle \otimes \langle a'_2 \rangle =  J_{P^{op}_{rg,d}} \bigl ( R_{\pi'_v}(s,t)(r,i) \bigr ) .$$
Le rÈsultat dÈcoule alors de la description explicite de 
$R_{\pi_v[r]_D}(\langle a_1 \rangle )= \pm |-|^{k/2}$ o˘, d'aprËs la remarque prÈcÈdant la proposition \ref{prop-coho-gen},
le signe et l'entier $k$ ne dÈpendent que du graphe associÈ au multisegment $a_1$ et pas de $\pi_v$.

\begin{defi} Les points de coordonnÈes $(r,i)$ tels que $m_{s,t}(r,i)=1$ sont contenus dans l'enveloppe convexe 
du polygone de sommets $(s+t-1,0)$, $(t,\pm (s-1))$ et $(1, \pm (s-t))$ si $s \geq t$ (resp.
$(t-s+1,0)$ si $t \geq s$); ‡ l'intÈrieur de celui-ci pour $r$ fixÈ, les $i$ concernÈs partent du 
bord en vont de $2$ en $2$, i.e. $m_{s,t}(r,i)=1$ si et seulement si:
\begin{itemize}
\item $\max†\{ 1, s+t-1-2(s-1) \}  \leq r \leq s+t-1$;

\item si $t \leq r \leq s+t-1$ alors $0 \leq |i| \leq s+t-1-r$ et $i \equiv s+t-1-r \mod 2$;

\item si $\max \{ 1,s+t-1-2(s-1) \} \leq r \leq t$ alors $0 \leq |i| \leq s-1-(t-r)$ et
$i \equiv s-t-1+r \mod 2$.
\end{itemize}
Le lecteur trouvera des illustrations de cette dÈfinition aux figures 
\ref{fig-coho-m1} et \ref{fig-coho-m2}.
\end{defi}

\begin{figure}[ht]
\centering
\includegraphics{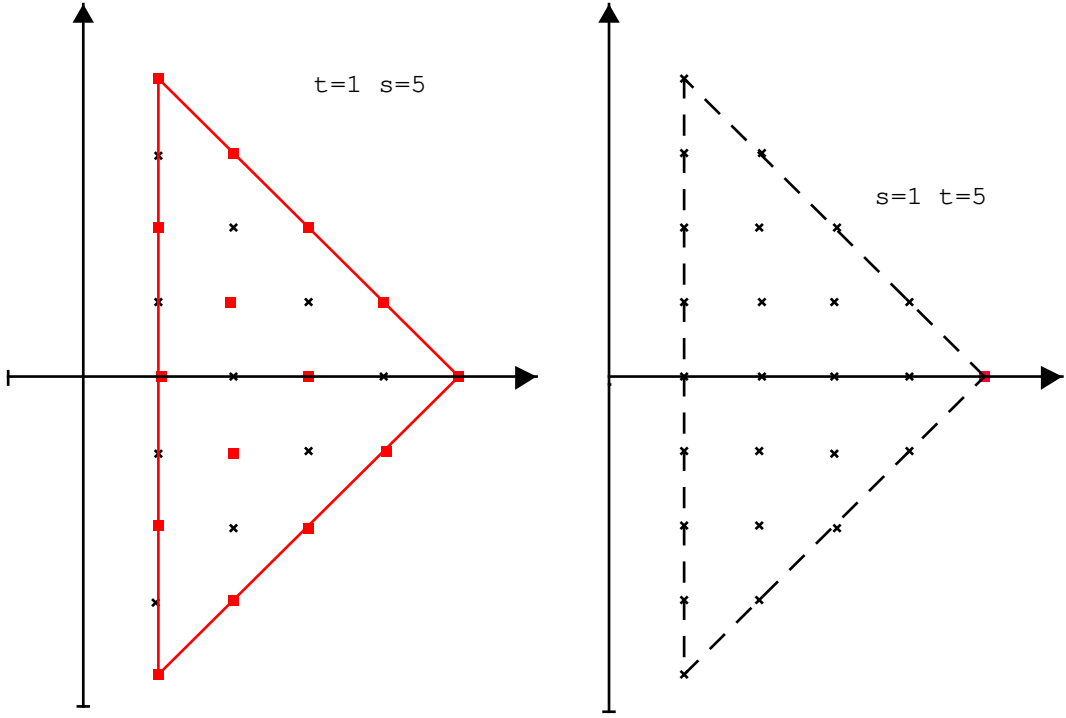}
\caption{\label{fig-coho-m1} Les carrÈs rouges
donnent les coordonnÈes $(r,i)$ des
$m_{s,t}(r,i)=1$ dans le cas d'une Speh ($t=1$) ‡ gauche et d'une Steinberg ($s=1$) ‡ droite}
\end{figure}

\begin{figure}[ht]
\centering
\includegraphics{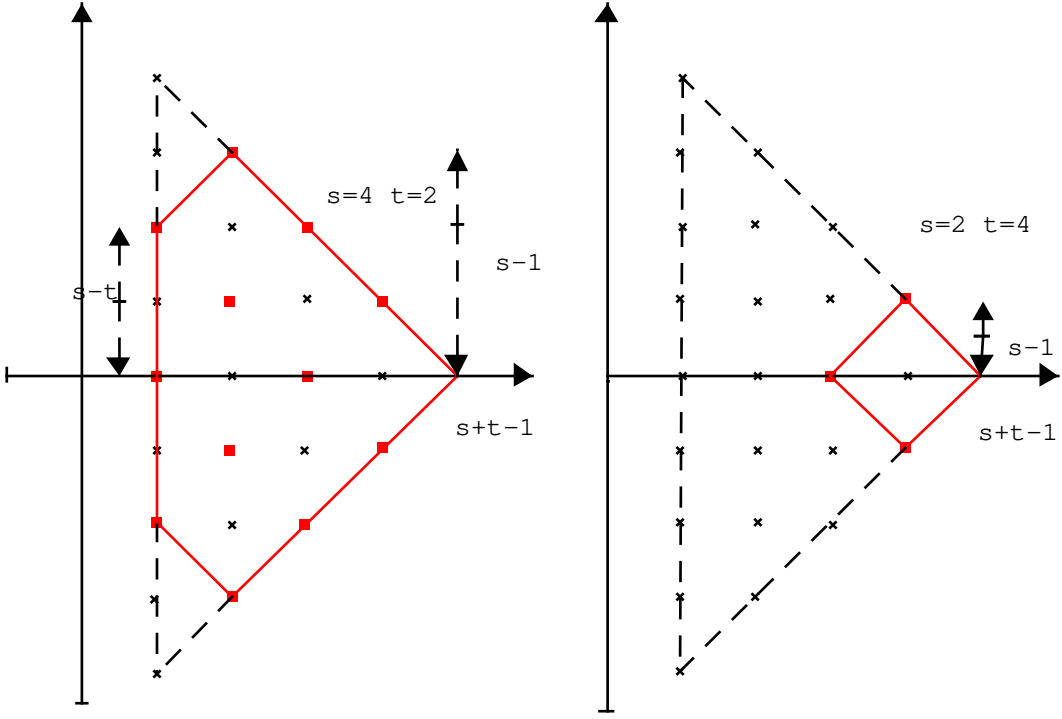}
\caption{\label{fig-coho-m2} Position des $m_{s,t}(r,i)=1$ dans le cas $s \geq t$ ‡ gauche et 
$t \geq s$ ‡ droite}
\end{figure}


\begin{figure}[ht]
\centering
\includegraphics[scale=.8]{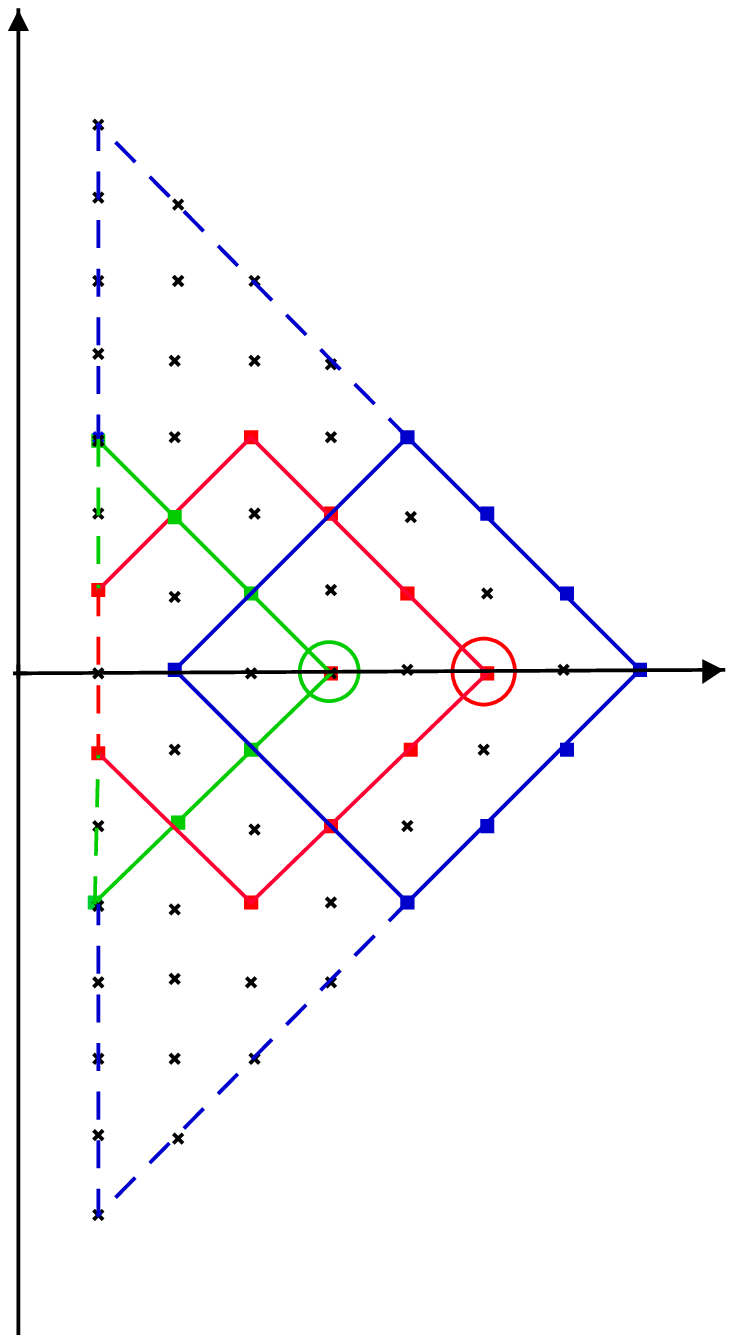}
\caption{\label{fig-coho-m4} Superposition des diagrammes pour le calcul des $m(r,i)$ dans le cas 
o˘ $\Pi_v \simeq \speh_4(\pi_v) \times \speh_4(\st_3(\pi_v)) \times \speh_4(\st_5(\pi_v))$}
\end{figure}

\rem pour
$$\Pi_v \simeq \speh_s(\st_{t_1}(\pi_{1,v})) \times \cdots \times \speh_s(\st_{t_u}(\pi_{u,v}))$$
l'ensemble des couples $(r,i)$ tels que $[H^i(\lexp p j^{\geq rg}_{!*} \FC_{\bar \Qm_l,\xi}(\pi_v,r)[d-rg])]\{ \Pi^{\oo,v} \}$
est obtenu en superposant les $u$ diagrammes prÈcÈdents comme sur la figure \ref{fig-coho-m4}; pour $(r,i)$
donnÈ la contribution du diagramme d'indice $k$ s'obtient en remontant ‡ la source, i.e. ‡
$(s+t_{k}-1,0)$ et en rempla\c cant $\speh_s(\st_{t_k}(\pi_{k,v}))$ par $R_{\pi_{k}}(s,t_{k})(r,i)$. Ainsi:
\begin{itemize}
\item pour tout $i \neq 0$ et pour tout $r$ tel que 
$[H^i(\lexp p j^{\geq rg}_{!*} \FC_{\bar \Qm_l,\xi}(\pi_v,r)_1[d-rg])]\{ \Pi^{\oo,v} \}$ 
est non nulle alors il existe $r'>r$ tel que 
$[H^0(\lexp p j^{\geq r'g}_{!*} \FC_{\bar \Qm_l,\xi}(\pi_v,r')_1[d-rg])]\{ \Pi^{\oo,v} \}$ est non 
nulle. Plus prÈcisÈment tout
$R_{\pi_k}(s,t_k)(r,i)(\Pi'_v)\otimes \Bigl ( \xi_k \otimes \Xi^{i/2} \Bigr )$ de $[H^i(\lexp p j^{\geq rg}_{!*} \FC_{\bar \Qm_l,\xi}(\pi_v,r)_1[d-rg])]\{ \Pi^{\oo,v} \}$, est associÈ ‡ un 
$R_{\pi_k}(s+t_k,0)(r',0)(\Pi'_v) \otimes \Bigl ( \xi_k \otimes \Xi^{0} \Bigr )$ de 
$[H^0(\lexp p j^{\geq (s+t_k)g}_{!*} \FC_{\bar \Qm_l,\xi}(\pi_v,s+t_k)_1[d-(s+t_k)g])]\{ \Pi^{\oo,v} \}$.

\item pour $i=0$, certains des constituants de 
$[H^0(\lexp p j^{\geq rg}_{!*} \FC_{\bar \Qm_l,\xi}(\pi_v,r)_1[d-rg])]\{ \Pi^{\oo,v} \}$ \og proviennent \fg{} des $r' >r$
comme dans le cas prÈcÈdent et d'autres non.
\end{itemize}
\noindent \textit{Illustration sur la figure \ref{fig-coho-m4} pour $r=4$}: 
\begin{itemize}
\item $ \speh_4(\pi_v) \times \speh_4(\st_3(\pi_v)) \times R_{\pi_v}(4,5)(4,0)$ provient de $(8,0)$;

\item $ \speh_4(\pi_v) \times  R_{\pi_v}(4,3)(4,0) \times \speh_4(\st_5(\pi_v))$ provient de $(6,0)$;

\item $R_{\pi_v}(4,1)(4,0) \times \speh_4(\st_3(\pi_v)) \times \speh_4(\st_5(\pi_v))$ ne provient pas d'un $(r',0)$ pour $r'>4$.
\end{itemize}

\begin{nota} \label{nota-AC}
Pour $\pi_v$ une reprÈsentation irrÈductible cuspidale de $GL_g(F_v)$, on note
$\AC_{\xi,\pi_v}(r,s)$ l'ensemble des reprÈsentations irrÈductibles automorphes de $G(\Am)$
$\xi$-cohomologique telles que:
\begin{itemize}
\item $(\Pi_{p,0})_{|\Zm_p^\times}=1$,

\item sa composante locale en $v$ soit de la forme $\speh_s(\st_t(\pi'_v)) \times ?$
avec $r=s+t-1$, $\pi'_v \sim^i \pi_v$ et $?$ une reprÈsentation quelconque de $GL_{d-stg}(F_v)$.
\end{itemize}
\end{nota}

\rem ainsi une reprÈsentation irrÈductible automorphe de $G(\Am)$ $\xi$-cohomologique
dont la composante locale est de la forme
$$ \speh_s \Bigl (\st_{t_1}(\pi_{1,v}) \times \cdots \times \st_{t_u}(\pi_{u,v}) \Bigr )= \speh_s(\st_{t_1}(\pi_{1,v})) \times
\cdots \times \speh_s(\st_{t_u}(\pi_{u,v}))$$
appartient ‡ $\AC_{\xi,\pi_{v,i}}(s+t_i-1,s)$ pour $i=1,\cdots,u$. En particulier ces ensembles
$\AC_{\xi,\pi_v}(r,s)$ ne sont pas disjoints.

\begin{coro} 
Soit $\pi_v$ une reprÈsentation irrÈductible cuspidale de $GL_g(F_v)$; pour tout
$1 \leq r \leq d/g$, on a
\begin{multline*}
[H^i(\lexp p j^{\geq rg}_{!*} \FC_{\bar \Qm_l,\xi}(\pi_v,r)_{1}[d-rg])] = 
\frac{e_{\pi_v} \sharp \ker^1(\Qm,G)}{d} \\
\sum_{\atop{(s,t)}{m_{s,t}(r,i)=1}} \sum_{\Pi \in \AC_{\xi,\pi_v}(s+t-1,s)} m(\Pi) d_\xi(\Pi_\oo) 
\Bigl ( \Pi^{\oo,v}\otimes R_{\pi_v}(s,t)(r,i)(\Pi_v) \Bigr )
\end{multline*}
o˘ avec les notations prÈcÈdentes, $R_{\pi_v}(s,t)(r,i)(\Pi_v)$ est donnÈ pour
$\Pi_v \simeq  \speh_s(\st_{t_1}(\pi_{1,v})) \times \cdots \times  \speh_s(\st_{t_u}(\pi_{u,v}))$,  
par la formule
$$\sum_{\atop{k:~\pi_{k,v} \sim_i \pi_{v}}{t_k=t}} 
R_{\pi_{v}}(s,t)(r,i)(\Pi_v,k) \otimes \Bigl ( \xi_k \otimes \Xi^{i/2} \Bigr ),$$ 
avec $\pi_{k,v} \simeq \pi_v \otimes \xi_k \circ \val \circ \det$.
\end{coro}

\subsection{Cohomologie des systËmes locaux d'Harris-Taylor}
\label{para-coho-c}

On reprend les calculs du paragraphe prÈcÈdent pour les 
$j^{\geq rg}_! \FC_{\Qm_{l},\xi}(\pi_{v},r)$.  Rappelons que la somme alternÈe de 
leurs groupes de cohomologie est donnÈe dans \cite{h-t} ce qui a permis, cf. le 
paragraphe prÈcÈdent, de calculer chacun des groupes de cohomologie des extensions
intermÈdiaires. En utilisant l'ÈgalitÈ suivante, cf. \cite{boyer-invent2} corollaire 5.4.1
\addtocounter{smfthm}{1}
\begin{multline} \label{egalite-hij}
i_{\IC,*}^{tg} j^{\geq tg}_{!} HT(\pi_v,\Pi_t)= i_{*}^{tg} j^{\geq tg}_{!*} HT(\pi_v,\Pi_t) + \\
\sum_{i=1}^{s-t} i_{*}^{(t+i)g} j^{\geq (t+i)g}_{!*} HT(\pi_v,\Pi_t \overrightarrow{\times}
\st_i(\pi_v))(i/2)
\end{multline}
on en dÈduit le calcul de chacun des $H^i (j^{\geq tg}_{\IC,!} HT(\pi_v,\Pi_t))$. 
Comme prÈcÈdemment on traite tout d'abord le cas de $\speh_s(\st_t(\pi_v))$.

\begin{prop} (cf. \cite{boyer-compositio} \S 5) \label{prop-coho-n0} \\
Dans le cas o˘ il existe $\Pi' \in \UC_G(\Pi^{\oo,v})$ avec $d_\xi(\Pi'_\oo) \neq 0$ et
$\Pi'_v \simeq \speh_s(\st_t(\pi_v))$ avec $\pi_v$ 
une reprÈsentation irrÈductible cuspidale de  $GL_g(F_v)$, alors pour tout $(r,i)$,
$[H^i(\lexp p j^{\geq rg'}_{!} \FC_{\bar \Qm_l,\xi}(\pi'_v,r)_1)]\{ \Pi^{\oo,v} \}$ est nul si $g' \neq g$ et $\pi'_v$
n'est pas dans la classe inertielle de $\pi_v$ et sinon Ègal ‡
$$n_{s,t}(r,i)e_{\pi_v} \Bigl ( \frac{\sharp \ker^1(\Qm,G)}{d} 
\sum_{\Pi' \in \UC_G(\Pi^{\oo,v})}  m(\Pi') d_\xi(\Pi'_\oo) \Bigr )
S_{\pi_v}(s,t)(r,i) \otimes \Bigl ( \xi \otimes \Xi^{\frac{2i+r-s-t+1}{2}} \Bigr ),$$
o˘ $\pi'_v \simeq \pi_v \otimes \xi \circ \val \circ \det$ et
$S_{\pi_v}(s,t)(r,i)$ est une reprÈsentation de $GL_{d-tg}(F_v)$ qui s'obtient 
combinatoirement ‡ partir de $\speh_s(\st_t(\pi_v))$ 
et $n_{s,t}(r,i) \in \{ 0, 1 \}$  est dÈfini ci-aprËs.
\end{prop}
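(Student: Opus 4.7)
The plan is to reduce the computation for the extension by zero $j^{\geq rg'}_!$ to the case of intermediate extensions already handled by Proposition~\ref{prop-coho-gen}, using the fundamental relation~(\ref{egalite-hij}) of \cite{boyer-invent2}. Concretely, that relation expresses
$$i_{*}^{rg'} j^{\geq rg'}_{!} HT(\pi'_v,\Pi_r) = i_{*}^{rg'} j^{\geq rg'}_{!*} HT(\pi'_v,\Pi_r) + \sum_{k \geq 1} i_*^{(r+k)g'} j^{\geq (r+k)g'}_{!*} HT(\pi'_v, \Pi_r \overrightarrow{\times} \st_k(\pi'_v))(k/2),$$
and taking cohomology, followed by projection onto the isotypic component $\{\Pi^{\oo,v}\}$, reduces the proposition to evaluating each summand via the corollary to Proposition~\ref{prop-coho-gen}.

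For the vanishing statement, the corollary to Proposition~\ref{prop-coho-gen} shows that a non-zero contribution at any level $r+k$ requires the existence of some $\Pi \in \UC_G(\Pi^{\oo,v})$ whose local component at $v$ contains a Speh--Steinberg factor of the form $\speh_{s'}(\st_{t'}(\pi''_v))$ with $\pi''_v$ inertially equivalent to $\pi'_v$. By the compatibility of local components within $\UC_G(\Pi^{\oo,v})$ recalled in \cite{h-t} VI.2.2, such $\pi''_v$ must also be inertially equivalent to $\pi_v$, since by hypothesis $\Pi'_v \simeq \speh_s(\st_t(\pi_v))$ is a pure Speh of Steinberg over $\pi_v$. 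This forces $g' = g$ and $\pi'_v \sim^i \pi_v$; otherwise every summand vanishes.

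In the remaining case, writing $\pi'_v \simeq \pi_v \otimes \xi \circ \val \circ \det$, the corollary gives, for each $k$, a contribution proportional to $m_{s,t}(r+k,i)\, R_{\pi_v}(s,t)(r+k,i)(\Pi'_v) \otimes (\xi \otimes \Xi^{i/2})$, further twisted by $\Xi^{k/2}$ coming from the Tate shift in (\ref{egalite-hij}). Assembling these contributions via the multisegment calculus of \cite{boyer-compositio} \S\S 1.5 and 5, one verifies that the sum collapses to a single combinatorial block of the form
$$n_{s,t}(r,i)\, S_{\pi_v}(s,t)(r,i) \otimes \bigl ( \xi \otimes \Xi^{(2i+r-s-t+1)/2} \bigr ),$$
where the overall twist $\Xi^{(2i+r-s-t+1)/2}$ arises from combining the intrinsic $\Xi^{i/2}$ with the Tate shift $\Xi^{k/2}$ for the dominant retained level. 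The uniformity of the combinatorics, already explained in \cite{boyer-compositio}, ensures that the surviving $GL_{d-rg}(F_v)$-representation depends only on $(s,t,r,i)$ and the inertial class of $\pi_v$.

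The main obstacle is the combinatorial cancellation that collapses the a priori large sum over $k$ into a single non-zero term with multiplicity $n_{s,t}(r,i) \in \{0,1\}$. Concretely this amounts to identifying, among the Zelevinsky multisegments arising from the iterated Jacquet modules of $\speh_s(\st_t(\pi_v))$, those that survive after extracting the Steinberg factors $\st_k(\pi'_v)$ introduced by~(\ref{egalite-hij}). Translating this surviving pattern into the lattice of pairs $(r,i)$ yields the support condition defining $n_{s,t}(r,i) = 1$, which is the analog for $j_!$ of the condition $m_{s,t}(r,i) = 1$ for $j_{!*}$ already described geometrically in Figures~\ref{fig-coho-m1} and~\ref{fig-coho-m2}.
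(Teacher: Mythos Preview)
Your overall strategy coincides with the paper's: reduce the computation of $H^i(j^{\geq rg'}_!\,\FC_\xi(\pi'_v,r)_1[d-rg'])$ to the already-known $H^i$ of the intermediate extensions via~(\ref{egalite-hij}). The vanishing when $\pi'_v$ is not inertially equivalent to $\pi_v$ is also correctly deduced from the description in Proposition~\ref{prop-coho-gen}.

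There is, however, a genuine gap in how you pass from~(\ref{egalite-hij}) to the individual $H^i$. The relation~(\ref{egalite-hij}) is an identity in a Grothendieck group of perverse sheaves; ``taking cohomology'' of such an identity only yields an equality of alternating sums $\sum_i(-1)^i[H^i(\cdots)]$, not the termwise decomposition you then use. Since $j^{\geq rg}_!\,HT(\pi_v,\Pi_r)$ is \emph{not} pure, one cannot simply invoke purity to separate the degrees, as was done in the reverse direction when computing $H^i(j_{!*})$ from the $H^i(j_!)$. What is actually needed---and what the paper uses---is the spectral sequence attached to the stratification \emph{filtration} of $j^{\geq rg}_!\,HT(\pi_v,\Pi_r)$ whose graded pieces are precisely the $j^{\geq (r+k)g}_{!*}$ terms on the right of~(\ref{egalite-hij}). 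The $E_1$-page is then given by Proposition~\ref{prop-coho-gen}, and one must analyze the differentials.

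The paper is explicit about which parts of this analysis are immediate and which are not: the vanishing of $n_{s,t}(r,i)$ outside the quadrilateral $(1,0),(1,s-1),(t,s-1),(s+t-1,0)$ follows directly from the shape of the $E_1$-page together with vanishing for $i<0$; the values on the polygonal boundary $(1,s-1)$--$(t,s-1)$--$(s+t-1,0)$ and the parity condition also come for free. The only genuinely combinatorial step---your ``one verifies that the sum collapses''---concerns the interior of the triangle $(1,s-1),(s,0),(1,0)$, and this is precisely where a nontrivial cancellation in the spectral sequence must be established. Your write-up does not distinguish the easy regions from the hard one, and more importantly does not justify why the Grothendieck-group identity suffices to pin down each $H^i$ separately.
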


\begin{defi}
Les points de coordonnÈes $(r,i)$ tels que $n_{s,t}(r,i)=1$ sont tous 
ceux contenus dans l'enveloppe convexe 
du polygone de sommets $(s+t-1,0)$, $(s,0)$, $(1, s-1)$ et $(t,s-1)$.
\end{defi}

\rem on trouvera des illustrations de cette dÈfinition aux figures 
\ref{fig-coho-n1} et \ref{fig-coho-n2}.

\rem ‡ vrai dire dans \cite{boyer-compositio} on traite seulement les cas $s=1$ ou $t=1$. Le calcul
procËde par analyse de la suite spectrale associÈe ‡ la filtration de stratification de
$ j^{\geq tg}_{\IC,!} HT(\pi_v,\Pi_t)$ dont les graduÈs sont les termes du membre de droite de
(\ref{egalite-hij}). La nullitÈ des $n_{s,t}(r,i)$ pour $(r,i)$ n'appartenant pas ‡ l'intÈrieur du quadrilatËre
de sommets $(1,0)$, $(1,s-1)$, $(t,s-1)$ et $(s+t-1,0)$ est alors immÈdiate (en utilisant la nullitÈ
pour $i<0$). De mÍme on obtient $n_{s,t}(r,i)$ pour $(r,i)$ appartenant ‡ la ligne polygonale joignant
$(1,s-1)$, $(t,s-1)$ et $(s+t-1,0)$ et que ceux qui sont non nuls sont tels que $i$ a la mÍme paritÈ que
le $(r,i')$ sur cette ligne polygonale. Le seul argument combinatoire rÈside alors dans l'Ètude de
la nullitÈ pour $(r,i)$ appartenant ‡ l'intÈrieur de $(1,s-1)$, $(s,0)$, et $(1,0)$. Dans la suite nous utiliserons
simplement le fait que $S_{\pi_v}(s,t)(s+t-1,0)=R_{\pi_v}(s,t)(s+t-1,0)$ qui dÈcoule directement
de ce que nous venons d'expliciter.

\begin{figure}[ht]
\centering
\includegraphics{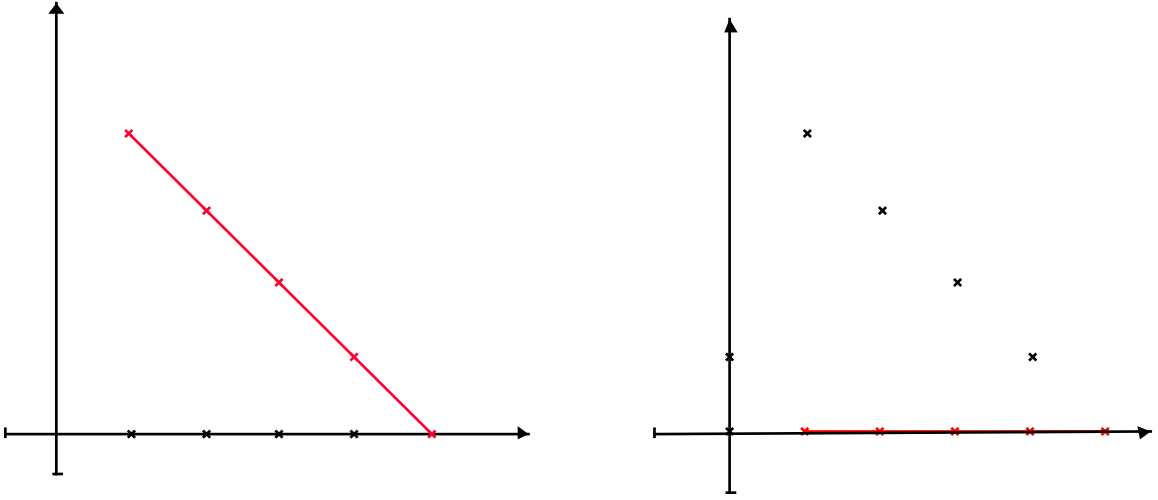}
\caption{\label{fig-coho-n1} Position des $n_{s,t}(r,i)=1$ 
dans le cas d'une Speh ‡ gauche et d'une Steinberg ‡ droite}
\end{figure}

\begin{figure}[ht]
\centering
\includegraphics{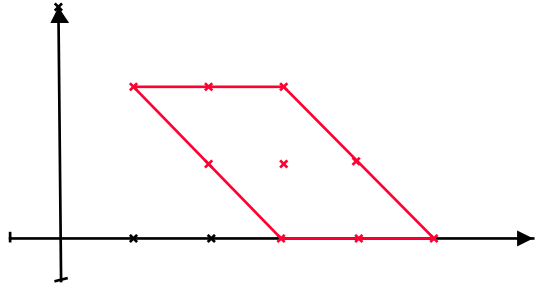}
\caption{\label{fig-coho-n2} Position des $n_{s,t}(r,i)=1$ pour $\speh_3(\st_3(\pi_v))$}
\end{figure}

Comme dans le paragraphe prÈcÈdent la multiplicativitÈ de $\red$, cf.
le lemme 1.5.5 de \cite{boyer-compositio}, permet d'obtenir le cas gÈnÈral.

\begin{prop} \label{prop-coho-gen2} 
Soit $\Pi^{\oo,v}$ une reprÈsentation irrÈductible de $G(\Am^{\oo,v})$,
pour tout $(r,i)$, dans le groupe de Grothendieck des reprÈsentations
de $G(\Am^{\oo,v}) \times GL_{d-rg}(F_v) \times \Zm$, 
$[H^i(\lexp p j^{\geq rg}_{!} \FC_{\bar \Qm_l,\xi}(\pi_v,r)_{1}[d-rg])]\{ \Pi^{\oo,v} \}$
est  Ègal ‡
$$n_{s,t}(r,i)e_{\pi_v} \Bigl ( \frac{\sharp \ker^1(\Qm,G)}{d} \sum_{\Pi' \in \UC_G(\Pi^{\oo,v})} 
m(\Pi') d_\xi(\Pi'_\oo) \Bigr ) S_{\pi_v}(r,i)(\Pi_v) $$
o˘ 
\begin{itemize}
\item $\Pi_v$ est la composante locale commune ‡ tous les $\Pi' \in \UC_G(\Pi^{\oo,v})$
tels que $d_\xi(\Pi'_\oo) \neq 0$, cf. le corollaire VI.2.2 de \cite{h-t};

\item $S_{\pi_v}(r,i)(\Pi_v)$ est une somme de reprÈsentations de $GL_{d-rg}(F_v) \times \Zm$ 
dÈfinie, pour 
$\Pi'_v \simeq  \speh_s(\st_{t_1}(\pi_{1,v})) \times \cdots \times  \speh_s(\st_{t_u}(\pi_{u,v}))$,  
o˘ les $\pi_{i,v}$ sont des reprÈsentations irrÈductibles cuspidales de $GL_{g_i}(F_v)$
par la formule
$$S_{\pi_v}(r,i)(\Pi_v)=\sum_{k:~\pi_{k,v} \sim_i \pi_{v}} 
n_{s,t_k}(r,i) S_{\pi_{v}}(s,t_k)(r,i)(\Pi_v,k) \otimes \Bigl ( \xi_k \otimes \Xi^{\frac{2i+r-s-t+1}{2}} \Bigr )$$ o˘:
\begin{itemize}
\item les $\xi_k$ sont tels que $\pi_{k,v} \simeq \pi_v \otimes \xi_k \circ \val \circ \det$;

\item $S_{\pi_{v}}(s,t_k)(r,i)(\Pi_v,k) $ s'Ècrit comme suit
\addtocounter{smfthm}{1}
\begin{multline*} 
S_{\pi_{v}}(s,t_k)(r,i)(\Pi_v,k) :=\speh_s(\st_{t_1}(\pi_{1,v})) \times \cdots \times  \speh_s(\st_{t_{k-1}}(\pi_{k-1,v})) \\ 
\times S_{\pi_{k,v}}(s,t_k)(r,i) \times \\ 
\speh_s(\st_{t_{k+1}}(\pi_{k+1,v})) \times \cdots \times  \speh_s(\st_{t_u}(\pi_{u,v})).
\end{multline*}
\end{itemize}
\end{itemize}
\end{prop}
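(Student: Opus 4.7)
Le plan est de d�duire ce r�sultat du cas pur trait� dans la proposition \ref{prop-coho-n0}, en suivant la m�me strat�gie de \og superposition \fg{} que celle utilis�e au paragraphe pr�c�dent pour obtenir la proposition \ref{prop-coho-gen} � partir de son propre cas pur. L'outil cl� est la multiplicativit� du foncteur $\red_{\tau_v}$, rappel�e au lemme 1.5.5 de \cite{boyer-compositio}, qui d�crit la distribution de $\red_{\tau_v}$ sur un produit d'induites $\pi_1 \times \cdots \times \pi_r$ facteur par facteur.

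Concr�tement, �tant donn�e $\Pi_v \simeq \speh_s(\st_{t_1}(\pi_{1,v})) \times \cdots \times \speh_s(\st_{t_u}(\pi_{u,v}))$, j'appliquerais la proposition \ref{prop-coho-n0} � chaque facteur $\speh_s(\st_{t_k}(\pi_{k,v}))$ pris isol�ment. Le support cuspidal impose d'une part $g_k=g$ et d'autre part $\pi_{k,v}$ dans la classe inertielle de $\pi_v$, sous peine d'annulation; pour les indices $k$ restants, la contribution vaut $n_{s,t_k}(r,i) \, S_{\pi_v}(s,t_k)(r,i) \otimes \bigl (\xi_k \otimes \Xi^{(2i+r-s-t_k+1)/2}\bigr )$, o� $\xi_k$ est le caract�re non ramifi� tel que $\pi_{k,v} \simeq \pi_v \otimes \xi_k \circ \val \circ \det$. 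La multiplicativit� exprime alors l'image dans le groupe de Grothendieck comme la somme sur $k$ des termes obtenus en ne transformant que le $k$-i�me facteur et en conservant les autres; apr�s induction parabolique, on retrouve $S_{\pi_v}(s,t_k)(r,i)(\Pi_v,k)$ tel qu'il figure dans l'�nonc�.

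L'obstacle principal que j'anticipe est combinatoire: il faut v�rifier que la r�currence (\ref{egalite-hij}), qui exprime $i^{tg}_{\IC,*} j^{\geq tg}_{!} HT(\pi_v,\Pi_t)$ en combinaison altern�e d'extensions interm�diaires tordues par des $\st_i(\pi_v)$, se propage coh�remment � travers la multiplicativit� de $\red$. Le d�calage $\Xi^{(2i+r-s-t+1)/2}$ sp�cifique au cas $j_!$ diff�re en effet de celui $\Xi^{i/2}$ apparaissant dans le cas $j_{!*}$, et il faut s'assurer que ce d�calage se recolle correctement avec les caract�res $\xi_k$ lors de la superposition, ce qui revient � suivre pr�cis�ment l'action de $W_v$ via le morphisme (\ref{eq-action-tordue}). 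Une fois ce point �tabli, l'�nonc� r�sulte de la combinaison de la proposition \ref{prop-coho-n0} et de la multiplicativit� de $\red$, reproduisant \emph{mutatis mutandis} le sch�ma menant � la proposition \ref{prop-coho-gen}.
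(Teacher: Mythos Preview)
Ta d�marche est correcte et co�ncide avec celle du papier : celui-ci se contente d'une phrase renvoyant � la multiplicativit� de $\red$ (lemme 1.5.5 de \cite{boyer-compositio}) et � la strat�gie de superposition d�j� employ�e pour la proposition \ref{prop-coho-gen}, ce qui est exactement ce que tu d�veloppes. Une pr�cision mineure~: l'�galit� (\ref{egalite-hij}) n'est pas une combinaison altern�e mais une somme � signes tous positifs dans le groupe de Grothendieck des faisceaux pervers, ce qui simplifie d'ailleurs le recollement des d�calages $\Xi^{(2i+r-s-t+1)/2}$ que tu anticipes.
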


\begin{figure}[ht]
\centering
\includegraphics{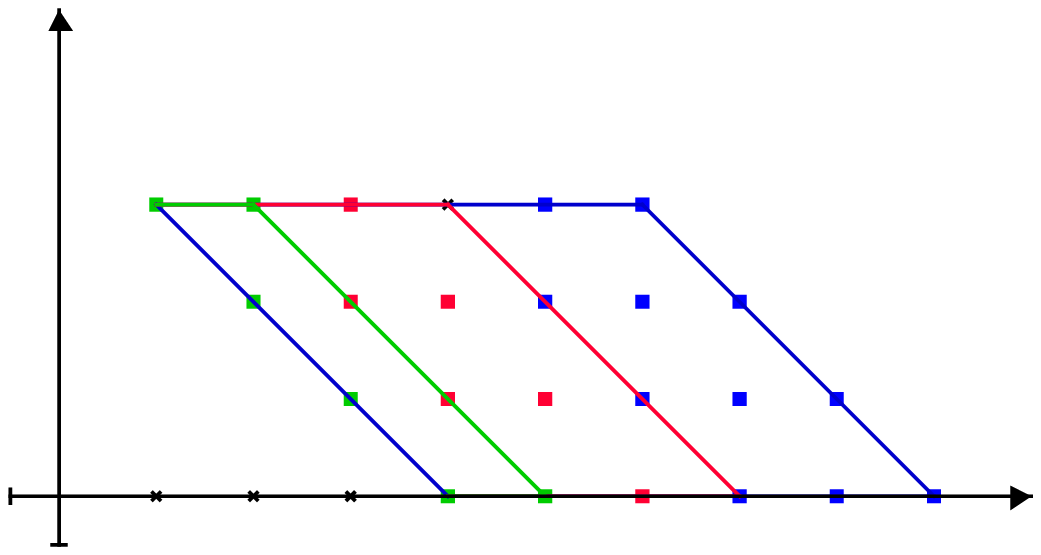}
\caption{\label{fig-coho-n4} Superposition des diagrammes pour le calcul des $n(r,i)$ dans le cas 
o˘ $\Pi_v \simeq  \speh_4(\pi_v) \times \speh_4(\st_3(\pi_v)) \times \speh_4(\st_5(\pi_v))$}
\end{figure}

\begin{coro} \label{coro-coho0}
Soit $\pi_v$ une reprÈsentation irrÈductible cuspidale de $GL_g(F_v)$; pour tout
$1 \leq r \leq d/g$, on a
\begin{multline*}
[H^i(\lexp p j^{\geq rg}_{!} \FC_{\bar \Qm_l,\xi}(\pi_v,r)_{1}[d-rg])] = 
\frac{e_{\pi_v} \sharp \ker^1(\Qm,G)}{d} \\
\sum_{\atop{(s,t)}{n_{s,t}(r,i)=1}} \sum_{\Pi \in \AC_{\xi,\pi_v}(s+t-1,s)} m(\Pi) d_\xi(\Pi_\oo) 
\Bigl ( \Pi^{\oo,v} \otimes S_{\pi_v}(s,t)(r,i)(\Pi_v) \Bigr )
\end{multline*}
o˘ avec les notations prÈcÈdentes, $S_{\pi_v}(s,t)(r,i)(\Pi_v)$ est donnÈ pour
$\Pi_v \simeq  \speh_s(\st_{t_1}(\pi_{1,v})) \times \cdots \times  \speh_s(\st_{t_u}(\pi_{u,v}))$,  
par la formule
$$\sum_{\atop{k:~\pi_{k,v} \sim_i \pi_{v}}{t_k=t}} 
S_{\pi_{v}}(s,t)(r,i)(\Pi_v,k) \otimes \Bigl ( \xi_k \otimes \Xi^{\frac{2i+r-s-t+1}{2}} \Bigr ),$$ 
avec $\pi_{k,v} \simeq \pi_v \otimes \xi_k \circ \val \circ \det$.
\end{coro}

\section{Augmentation de l'irrÈductibilitÈ}
\label{para-congru2}

\subsection{Congruences faibles entre reprÈsentations automorphes}
\label{para-formulation}

CommenÁons par rappeler la notion classique de congruence faible 
entre reprÈsentations automorphes de $G(\Am)$ comme le lecteur pourra le trouver
dans le \S 3 de \cite{vigneras-langlands}. 
Pour un corps
local non archimÈdien $K$ d'anneau des entiers $\OC_K$, et un groupe rÈductif $G$ connexe
non ramifiÈ sur $K$ tel que $G(\OC_K)$ est dÈfini, Ègal ‡ un sous-groupe ouvert compact
maximal spÈcial de $G(K)$, la $\Zm$-algËbre de Hecke sphÈrique $\HC_K$ de $G(K)$
par rapport ‡ $G(\OC_K)$ est commutative.

Pour $\pi_K$ une $\bar \Qm_l$-reprÈsentation
irrÈductible non ramifiÈe entiËre sur l'anneau des entiers $\OC$ d'une extension finie de
$\Qm_l$ contenue dans $\bar \Qm_l$, l'action de $\HC_K$ sur l'espace vectoriel
de dimension $1$, $\pi_K^{G(\OC_K)}$, est donnÈe par le caractËre de Satake
$$\lambda_{\pi_K}:\HC_K \longrightarrow \OC.$$

\begin{defi} 
Deux $\bar \Qm_l$-reprÈsentations $\pi_K$ et $\pi'_K$, irrÈductibles, $\OC$-entiËres et non
ramifiÈes de $G(K)$ sont dites \emph{congruentes modulo $l$} si leurs caractËres
de Satake $\lambda_{\pi_K}$ et $\lambda_{\pi'_K}$ ont la mÍme rÈduction modulo $l$.
\end{defi}

Dans le cas global, soient $\PC$ l'ensemble des places finies de $\Qm$ et $T \subset \PC$
de complÈmentaire fini tel que pour tout $w \in T$, $G(\Qm_w)$ est non ramifiÈ et
$G(\Zm_w)$ est dÈfini Ègal ‡ un sous-groupe compact ouvert spÈcial de $G(\Qm_w)$.
Pour $\Pi$ une $\bar \Qm_l$-reprÈsentation irrÈductible $\OC$-entiËre de $G(\Am)$ 
non ramifiÈe en toute place de $T$, la $\Zm$-algËbre de Hecke sphÈrique $\HC_T$ 
de $G(\Qm_T):=\prod_{w \in T} G(\Qm_w)$ relativement ‡ $G(\Zm_T):=
\prod_{w \in T} G(\Zm_w)$, agit sur la droite $\pi_T^{G(\Zm_T)}$ via le
caractËre de Satake de $\Pi_T$:
$$\lambda_{T,\Pi}:\HC_T \longrightarrow \OC.$$

\begin{defi} \label{defi-faible}
Deux $\bar \Qm_l$-reprÈsentations $\Pi$ et $\Pi'$, irrÈductibles, $\OC$-entiËres et non
ramifiÈes de $G(\Am)$ sont dites \emph{$T$-congruentes modulo $l$} si leurs caractËres
de Satake $\lambda_{T,\pi}$ et $\lambda_{T,\Pi'}$ ont la mÍme rÈduction modulo $l$.
Elles sont dites \emph{faiblement congruentes}, s'il existe un tel ensemble de places $T$
telles qu'elles soient $T$-congruentes.
\end{defi}

\rem $\Pi$ et $\Pi'$ sont $T$-congruentes modulo $l$ si et seulement si 
$\Pi_w$ et  $\Pi'_w$ sont congruentes pour toute place $w$ de $T$.

Pour tout ensemble $T$ comme ci-avant et pour toute reprÈsentation
irrÈductible $\Pi$ de $G(\Am)$ non ramifiÈe en $T$, on considËre 
$\Pi_T^{G(\Zm_T)}$ comme un $\HC_T$-module et $\Pi^{T,\oo}$ comme une
reprÈsentation de $\prod_{w \not \in T \cup \{ \oo \} } G(\Qm_w)$. 

\begin{nota} 
Pour tout ensemble $T$ de places finies $w$ de $\Qm$ comme prÈcÈdemment, avec
$p \not \in T$, on notera $\GF_{\bar \Qm_l,T}(v,h)$ le groupe de Grothendieck des 
$\bar \Qm_l$-reprÈsentations de longueur finie de 
$\prod_{w \not \in T \cup \{ \oo,p \} } G(\Qm_w) \times \prod_{i=2}^r G(F_{v_i}) \times 
GL_h(F_v) \times \Zm$ munies d'une action de $\HC_T$.
Pour $\Pi_T \in \GF_{\bar \Qm_l,T}(v,h)$ et $T' \subset T$, $\Pi_T^{G(\Zm_{T-T'})}$ est un objet
de $\GF_{\bar \Qm_l,T'}(v,h)$ et on note $\GF_{\overline{\Qm}_l}(v,h)$ la limite projective associÈe.
\end{nota}

\rem si $\Pi^{\oo,v} \otimes \Pi'_v \otimes \chi$ est une reprÈsentation entiËre de 
$G(\Am^{\oo,v}) \times GL_h(F_v) \times \Zm$, alors, cf. l'appendice A de  
\cite{vigneras-langlands}, ses caractËres de Satake et chacune de ses composantes $\Pi_w$ 
sont aussi entiers.

On voit la rÈduction modulo $l$ de $\Pi_T^{G(\Zm_T)} \otimes \Pi^{T,\oo} \otimes \Pi'_v 
\otimes \chi$ comme un objet du groupe de Grothendieck 
$\GF_{\bar \Fm_l,T}(v,h)$ des $\bar \Fm_l$-reprÈsentations de longueur finie de 
$GL_h(F_v) \times \Zm \times \prod_{w \not \in T \cup \{ \oo,v \} } G(\Qm_w)$ munies 
d'une action de $\HC_T$.

\begin{defi} \label{defi-gp-groth}
On dira que deux objets \og entiers \fg{} $A$ et $B$ 
de $\GF_{\bar \Qm_l}(v,h)$ ont mÍme rÈduction modulo $l$, et on Ècrira
$r_l(A)=r_l(B)$
si pour tout $T$ comme ci-avant,
les images de $A_T$ et $B_T$ dans $\GF_{\bar \Fm_l,T}(v,h)$ par rÈduction modulo $l$
sont Ègales. On dira que $A_T$ est la rÈduction modulo $l$ de niveau $T$ de $A$.
\end{defi}

\rem l'ensemble des rÈductions modulo $l$ de l'objet de $\GF_{\bar\Qm_l}(v,h)$ 
associÈes ‡ $\Pi^{\oo,v} \otimes \Pi'_v \otimes \chi$ 
n'est pas un objet de $\GF_{\bar \Fm_l}(v,h)=\lim_{\leftarrow T}
\GF_{\bar \Fm_l,T}(v,h)$ car la rÈduction modulo $l$ ne commute pas en gÈnÈral
avec le foncteur des invariants sous $G(\Zm_w)$.

\begin{lemm} \label{lem-foncteur}
Le foncteur $r_l:\groth_{\Qm_l}(GL_n(\Qm_p)) \longto \groth_{\Fm_l}(GL_n(\Qm_p))$
de rÈduction modulo $l$ du groupe de Grothendieck des $\Qm_l$-reprÈsentations entiËres de 
$GL_n(\Qm_p)$ dans celui des $\Fm_l$-reprÈsentations de $GL_n(\Qm_p)$ est tel que
$$r_l \Bigl ( \sum_i (-1)^i H^i(X,\Qm_l) \Bigr ) \simeq \sum_i (-1)^i H^i(X,\Fm_l).$$
\end{lemm}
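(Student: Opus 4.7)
Je partirais de la cohomologie enti�re $H^i(X,\Zm_l)$, qui joue le r�le de \og pivot \fg{} reliant les deux c�t�s.
D'abord, pour chaque $i$, la suite exacte universelle des coefficients donne
$$0 \longrightarrow H^i(X,\Zm_l) \otimes_{\Zm_l} \Fm_l \longrightarrow H^i(X,\Fm_l) \longrightarrow \tor_1^{\Zm_l} \bigl ( H^{i+1}(X,\Zm_l),\Fm_l \bigr ) \longrightarrow 0,$$
d'o�, dans le groupe de Grothendieck des $\Fm_l$-repr�sentations lisses de longueur finie de $GL_n(\Qm_p)$,
$$[H^i(X,\Fm_l)] = [H^i(X,\Zm_l)/l] + [H^{i+1}(X,\Zm_l)[l]].$$
En parall�le, en choisissant comme r�seau stable du c�t� rationnel $H^i(X,\Zm_l)^{\mathrm{free}}:=H^i(X,\Zm_l)/H^i(X,\Zm_l)^{\mathrm{tors}}$, la d�finition de $r_l$ donne $r_l([H^i(X,\Qm_l)])=[H^i(X,\Zm_l)^{\mathrm{free}} \otimes \Fm_l]$.

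Ensuite, je d�composerais le membre de droite ci-dessus en isolant la partie libre et la partie de torsion de $H^i(X,\Zm_l)$. Pour la partie libre, l'�galit� $[H^i(X,\Zm_l)^{\mathrm{free}}/l]=r_l([H^i(X,\Qm_l)])$ est tautologique. Il reste � voir que les contributions venant de $H^i(X,\Zm_l)^{\mathrm{tors}}/l$ et $H^{i+1}(X,\Zm_l)^{\mathrm{tors}}[l]$, apr�s sommation altern�e sur $i$, se compensent int�gralement. Pour cela, le point cl� est le suivant: pour tout $\Zm_l[G]$-module $T$ de longueur finie sur $\Zm_l$ (ici $G=GL_n(\Qm_p)$), on a l'�galit�
$$[T/lT]=[T[l]]$$
dans le groupe de Grothendieck des $\Fm_l[G]$-modules de longueur finie. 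Elle d�coule imm�diatement des deux suites exactes courtes
$$0 \longrightarrow T[l] \longrightarrow T \mapright{l} lT \longrightarrow 0, \qquad 0 \longrightarrow lT \longrightarrow T \longrightarrow T/lT \longrightarrow 0,$$
qui fournissent toutes deux $[T]=[lT]+[T[l]]=[lT]+[T/lT]$.

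En appliquant cette identit� aux $T=H^i(X,\Zm_l)^{\mathrm{tors}}$, on obtient
$$\sum_i (-1)^i \bigl ( [H^i(X,\Zm_l)^{\mathrm{tors}}/l] + [H^{i+1}(X,\Zm_l)^{\mathrm{tors}}[l]] \bigr ) = \sum_i (-1)^i \bigl ( [T_i[l]] - [T_i[l]] \bigr )=0,$$
en posant $T_i=H^i(X,\Zm_l)^{\mathrm{tors}}$ et en r�indexant la seconde somme. Il ne reste donc dans $\sum_i (-1)^i [H^i(X,\Fm_l)]$ que les contributions libres, ce qui donne l'�galit� cherch�e avec $\sum_i (-1)^i r_l([H^i(X,\Qm_l)])$.

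L'obstacle principal n'est pas conceptuel mais se situe dans la v�rification pr�alable que les groupes $H^i(X,\Zm_l)$ sont de type fini \emph{en tant que repr�sentations lisses admissibles}, afin que l'�galit� $[T/l]=[T[l]]$ ait un sens dans le bon groupe de Grothendieck; cette finitude est bien connue pour les vari�t�s de Shimura consid�r�es gr�ce � la propret� et aux arguments d'admissibilit� standards, mais c'est le seul point � rendre explicite avant de d�rouler l'argument combinatoire ci-dessus.
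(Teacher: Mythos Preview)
Your argument is correct and takes a genuinely different route from the paper's. The paper works at the level of complexes: it chooses an equivariant flat resolution $P^\bullet$ of the $\Zm_l$-cohomology, so that $P^\bullet \otimes_{\Zm_l} \Qm_l$ and $P^\bullet \otimes_{\Zm_l} \Fm_l$ compute the two sides, and then invokes the general fact that for any non-degenerate $t$-structure the Euler characteristic gives an isomorphism $K(\DC) \simeq \groth(\CC)$; the equality then follows tautologically from $r_l([P^\bullet \otimes \Qm_l])=[P^\bullet \otimes \Fm_l]$ in the $K$-group of complexes. Your approach instead stays at the level of individual cohomology groups, using the universal coefficient sequence and an explicit cancellation of the torsion contributions in the alternating sum. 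Your route is more elementary and hands-on; the paper's is more conceptual and sidesteps any splitting of $H^i(X,\Zm_l)$ into free and torsion parts.

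One small imprecision worth tightening: you assert $[T/lT]=[T[l]]$ in the Grothendieck group of finite-length $\Fm_l[G]$-modules, but the two short exact sequences you use live in the category of $\Zm_l[G]$-modules, since $T$ and $lT$ are not killed by $l$. What the sequences give directly is the equality in $K_0$ of the abelian category of $l$-power-torsion (admissible, finite-length) $\Zm_l[G]$-modules. To descend to $\groth_{\Fm_l}(G)$ you need the d\'evissage step: filtering any such $T$ by $T[l^k]$ shows that this category has the same simple objects as finite-length $\Fm_l[G]$-modules, hence the same Grothendieck group. This is routine, but since you are careful about the finiteness hypotheses at the end, it is worth making this identification explicit as well.
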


\begin{proof} 
On choisit une rÈsolution Èquivariante plate 
$P^\bullet$ (i.e. sans $l$-torsion) de la $\Zm_l$ cohomologie de sorte que la cohomologie de 
$P^\bullet \otimes_{\Zm_l} \Qm_l$ (resp. $P^\bullet \otimes_{\Zm_l} \Fm_l$)
calcule les $H^\bullet(X,\Qm_l)$ (resp. $H^\bullet(X,\Fm_l)$). 
On rappelle alors le rÈsultat connu suivant (cf. par exemple \cite{boyer-ens} proposition 7.1.3):

\begin{lemm} Soit $(\DC^{\leq 0},\DC^{\geq 0})$ une catÈgorie dÈrivÈe munie d'une
$t$-structure non dÈgÈnÈrÈe: on note $\CC$ son coeur qui
est alors une catÈgorie abÈlienne de groupe de Grothendieck $\groth(\CC)$. L'application qui ‡ un objet $\FC$ de $\DC$ associe
$$\sum_i (-1)^i [\lexp p h^i \FC ] \in \groth(\CC)$$
induit un isomorphisme du groupe de Grothendieck $K(\DC)$ de la catÈgorie triangulÈe $\DC$, sur $\groth(\CC)$, o˘ l'on rappelle 
qu'Ètant donnÈe
une catÈgorie triangulÈe $A$ localement petite, son groupe de Grothendieck $K(A)$ est dÈfini
comme le groupe libre engendrÈ par les classes d'isomorphismes d'objets de $A$ quotientÈ, par les relations:
$A[1]=-A$ et $A=B+C$ pour tout triangle distinguÈ $B \longto A \longto C \longmapright{+1}$.
\end{lemm}

En considÈrant les $t$-structures triviales on en dÈduit alors que l'image de la somme alternÈe de la cohomologie de 
$P^\bullet \otimes_{\Zm_l} \Qm_l$ (resp. de  $P^\bullet \otimes_{\Zm_l} \Fm_l$) est Ègale dans le groupe de Grothendieck 
correspondant ‡ $\sum_i (-1)^i H^i(X,\Qm_l)$ (resp. $\sum_i (-1)^i H^i(X,\Fm_l)$). Ainsi le rÈsultat dÈcoule du fait Èvident que 
l'image par le morphisme de rÈduction modulo $l$ de la classe de $P^\bullet \otimes_{\Zm_l} \Qm_l$ dans $K(D^b(\spec \Fm_p,\Qm_l))$ est Ègale ‡ la 
classe de $P^\bullet \otimes_{\Zm_l} \Fm_l$ dans $K(D^b(\spec \Fm_p,\Fm_l))$.
\end{proof}

\subsection{Changement de cuspidalitÈ}

Rappelons les notations de \ref{nota-varrho}:
\begin{itemize}
\item $\varrho_{-1}$ est une $\bar \Fm_l$-reprÈsentation supercuspidale de $GL_{g_{-1}}(F_v)$;

\item pour $u \geq -1$ (resp. $u' \geq u$), on choisit $\pi_{v,u}$ (resp. $\pi_{v,u'}$) 
une $\bar \Qm_l$-reprÈsentation irrÈductible cuspidale de $GL_g(F_v)$ (resp. $GL_{g'}(F_v)$)
telle que, avec les notations de \ref{nota-varrho}, on ait $r_l(\pi_{v,u})=\varrho_u$ 
(resp. $r_l(\pi_{v,u'})=\varrho_{u'}$). En particulier on a $g=g_u=g_{-1} m(\varrho) l^u$
(resp. $g'=g_{u'}=g_{-1} m(\varrho)l^{u'}$). 
\end{itemize}

\rem dans la suite on Ècrira $l^{u'-u}$ alors que pour $u=-1$ on devrait Ècrire $m(\varrho) l^{u'}$; on espËre gagner
en clartÈ ce que l'on perdra en rigueur.

On choisit alors $r$ et $r'$ tels que $rg_u=r'g_{u'}$ de sorte que d'aprËs \ref{prop-red-D}, on a 
\addtocounter{smfthm}{1}
\begin{equation} \label{eq-chgt-cuspi}
l^{u'-u}\Bigl [ \Fm \FC_{\xi,\bar \Zm_l}(r,\pi_{v,u}) \Bigr ] =  \Bigl [ \Fm \FC_{\xi,\bar \Zm_l}(r',\pi_{v,u'}) \Bigr ].
\end{equation}
En revanche comme dans notre situation $\lexp p j_{!*}$ ne commute pas avec $\Fm$, cf. \cite{boyer-ens} \S 5,
nous allons utiliser le foncteur $j_!$ et donc l'ÈgalitÈ
\addtocounter{smfthm}{1}
\begin{multline} \label{eq-F-chgt2}
l^{u'-u} \Fm \Bigl [  j_!^{\geq rg} \FC_{\xi,\bar \Zm_l}(r,\pi_{v,u})\Bigr ] =l^{u'-u}
j_!^{\geq rg} \Bigl [ \Fm \FC_{\xi,\bar \Zm_l}(r,\pi_{v,u}) \Bigr ]  \\ = 
j_!^{\geq r'g'} \Bigl [ \Fm \FC_{\xi,\bar \Zm_l}(r',\pi_{v,u'}) \Bigr ] = \Fm \Bigl [ 
j_!^{\geq r'g'} \FC_{\xi,\bar \Zm_l}(r',\pi_{v,u'}) \Bigr ] .$$
\end{multline}

\begin{prop} \label{rl-hi}
Avec les notations prÈcÈdentes et 
au sens de la dÈfinition \ref{defi-gp-groth}, on a l'ÈgalitÈ
\begin{multline*}
l^{u'-u} \sum_i (-1)^i \sum_{\atop{(s,t)}{n_{s,t}(r,i)=1}} \sum_{\Pi \in \AC_{\xi,\pi_{v,u}}(s+t-1,s)} 
m(\Pi) d_\xi(\Pi_\oo) r_l \Bigl ( \Pi^{\oo,v} \otimes S_{\pi_{v,u}}(s,t)(r,i)(\Pi_v) \Bigr ) 
\\ = \\  
\sum_i (-1)^i \sum_{\atop{(s',t')}{n_{s',t'}(r',i)=1}} \sum_{\Pi' \in \AC_{\xi,\pi_{v,u'}}(s'+t'-1,s')} 
m(\Pi') d(\Pi'_\oo) r_l \Bigl ( \Pi^{',\oo,v} \otimes S_{\pi_{v,u'}}(s',t')(r',i)(\Pi'_v) \Bigr ).
\end{multline*}
\end{prop}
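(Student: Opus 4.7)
Le plan consiste � transporter l'identit� (\ref{eq-F-chgt2}) au niveau des faisceaux pervers jusqu'aux groupes de cohomologie, puis � r��crire chaque membre en termes de repr�sentations automorphes via le corollaire \ref{coro-coho0}.

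Je prendrais d'abord la somme altern�e des groupes de cohomologie des deux membres de (\ref{eq-F-chgt2}), puis appliquerais le lemme \ref{lem-foncteur} qui garantit que la r�duction modulo $l$ commute, dans le groupe de Grothendieck concern� muni de l'action des alg�bres sph�riques de Hecke, avec cette somme altern�e. On obtient ainsi, au sens de la d�finition \ref{defi-gp-groth}, l'�galit�
\begin{multline*}
l^{u'-u}\, r_l \Bigl ( \sum_i (-1)^i [H^i(\lexp p j^{\geq rg}_! \FC_{\bar \Qm_l,\xi}(\pi_{v,u},r)_1[d-rg])] \Bigr ) = \\
r_l \Bigl ( \sum_i (-1)^i [H^i(\lexp p j^{\geq r'g'}_! \FC_{\bar \Qm_l,\xi}(\pi_{v,u'},r')_1[d-r'g'])] \Bigr ).
\end{multline*}

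J'appliquerais ensuite le corollaire \ref{coro-coho0} � chacun des deux membres pour obtenir leur description automorphe, � savoir $\frac{e_{\pi_v}\sharp\ker^1(\Qm,G)}{d}$ fois la triple somme sur $(s,t)$, $\Pi\in\AC_{\xi,\pi_v}(s+t-1,s)$ et $i$ figurant dans l'�nonc�. L'int�gralit� requise pour prendre la r�duction modulo $l$ terme � terme d�coule de la remarque suivant la d�finition \ref{defi-automorphe} (les composantes $\Pi^{\oo,v}$ sont enti�res pour toute $\Pi$ automorphe cohomologique), compl�t�e par la construction combinatoire de $S_{\pi_v}(s,t)(r,i)(\Pi_v)$ comme somme d'induites de Speh enti�res.

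Le point sensible, et obstacle principal, est l'identification terme � terme des deux sommes apr�s r�duction modulo $l$, ainsi que la gestion soigneuse des pr�facteurs globaux face au facteur multiplicatif $l^{u'-u}$. Pour la partie combinatoire, j'invoquerais la remarque suivant la proposition \ref{prop-coho-gen}, transpos�e au cas de $S_{\pi_v}$, qui affirme que la r�duction modulo $l$ de $S_{\pi_v}(s,t)(r,i)$ ne d�pend que de $r_l(\pi_v)$ et des param�tres $(s,t,r,i)$; compte tenu des �galit�s $r_l(\pi_{v,u})=\varrho_u$ et $r_l(\pi_{v,u'})=\varrho_{u'}$, la correspondance entre les param�tres $(s,t)$ et $(s',t')$ � travers $\AC_{\xi,\pi_{v,u}}$ et $\AC_{\xi,\pi_{v,u'}}$ se lit alors dans la combinatoire des droites de Zelevinsky modulaires. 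Pour les pr�facteurs, j'utiliserais la description rappel�e apr�s la notation \ref{nota-varrho} des rel�vements cuspidaux par les paires $(E,\chi_E)$ modulo torsion par un caract�re d'ordre une puissance de $l$, qui permet de relier directement $e_{\pi_{v,u}}$ et $e_{\pi_{v,u'}}$ et d'absorber le facteur $l^{u'-u}$ pour retrouver l'�galit� annonc�e.
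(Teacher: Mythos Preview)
Your first two steps are exactly the paper's argument: the paper's own proof is a single sentence stating that the result follows from the explicit description of the corollary \ref{coro-coho0} combined with the lemma \ref{lem-foncteur}. Taking the alternating sum of cohomology in (\ref{eq-F-chgt2}), applying the lemma \ref{lem-foncteur}, and then substituting the automorphic expression of the corollary \ref{coro-coho0} on each side is all that is required.

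Your third paragraph, however, rests on a misreading of the statement and introduces work that is not needed. The proposition asserts an equality between two elements of the appropriate Grothendieck group (in the sense of the d\'efinition \ref{defi-gp-groth}); it does \emph{not} claim any bijective correspondence between the individual summands indexed by $(s,t,\Pi)$ on the left and $(s',t',\Pi')$ on the right. Once the two $r_l$(alternating sums) have been identified and each rewritten via the corollary \ref{coro-coho0}, the equality is established as stated --- there is no ``identification terme \`a terme'' to perform, and no need to invoke the combinatorics of modular Zelevinsky lines. Likewise, the factor $l^{u'-u}$ is not meant to be absorbed: it is inherited directly from (\ref{eq-F-chgt2}) and remains in the final formula exactly as written. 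The only residual bookkeeping concerns the global constants $\frac{e_{\pi_v}\sharp\ker^1(\Qm,G)}{d}$ coming from the corollary \ref{coro-coho0}; this is a routine check and does not involve the mechanism you propose via lifts $(E,\chi_E)$.
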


\begin{proof}
Le rÈsultat dÈcoule de la description explicite du corollaire \ref{coro-coho0} et du lemme \ref{lem-foncteur}.
\end{proof}

\rem en Ècrivant l'ÈgalitÈ prÈcÈdente qu'avec des signes positifs,
on voit qu'‡ un sous-quotient de niveau $T$ d'un $\Pi \in \AC_{\xi,\pi_{v,u}}(s+t-1,s)$ 
correspond un sous-quotient de niveau $T$ d'un $\Pi'$
\begin{enumerate}[(i)]
\item soit de $\AC_{\xi,\pi_{v,u'}}(s'+t'-1,s')$ pour un couple $(s',t')$ ‡ dÈterminer

\item soit de $\AC_{\xi,\pi_{v,u}}(s'+t'-1,s')$, et dans ce cas on n'est pas parvenu ‡ changer de cuspidale.
\end{enumerate}
Dans le cas (ii), si
$\Pi \in \AC_{\xi,\pi_{v,u}}(s+t-1,s)$ est associÈ ‡ un ÈlÈment de $\AC_{\xi,\pi_{v,u}}(s_{1}+t_{1}-1,s_1)$ 
avec $s_{1}+t_{1} > s+t$, en raisonnant par rÈcurrence on lui associe alors une reprÈsentation
$\AC_{\xi,\pi_{v,u'}}(s'+t'-1,s')$ comme dans le cas (i) ci dessus; autrement dit on parvient toujours 
‡ changer de cuspidale. Du cÙtÈ galoisien cela revient ‡ 
\og augmenter l'irrÈductibilitÈ locale\fg{} i.e. en notant $\sigma(\Pi)$ et $\sigma(\Pi')$
les reprÈsentations galoisiennes associÈes respectivement ‡ $\Pi$ et $\Pi'$ par la correspondance de Langlands globale, $\sigma(\Pi')$ est congruente ‡ $\sigma(\Pi)$
mais sa composante locale $\sigma(\Pi'_{v})$ est \og moins rÈductible \fg{} que
$\sigma(\Pi_{v})$ puisque certains de ses facteurs indÈcomposables sont devenus
irrÈductibles. Dans les paragraphes suivants nous
allons Ètudier les situations les plus simples de cette ÈgalitÈ.

\subsection{Le cas non dÈgÈnÈrÈ}

On reprend les notations du paragraphe prÈcÈdent.

\begin{prop} \label{prop-congru2}
Soit $\Pi' \in \AC_{\xi,\pi_{v,u'}}(t',1)$ une reprÈsentation irrÈductible automorphe. 
Il existe alors une 
reprÈsentation irrÈductible automorphe $\Pi \in \AC_{\xi,\pi_{v,u}}(t,1)$  
avec $tg=t'g'$, qui est congruente avec $\Pi'$.
\end{prop}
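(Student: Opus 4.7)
The plan is to apply Proposition \ref{rl-hi} with $r = t$ and $r' = t'$, which satisfy $r g_u = t' l^{u'-u} g_u = t' g_{u'} = r' g_{u'}$. The given automorphic representation $\Pi'$ contributes to the right-hand side through the \emph{pure Steinberg vertex} $(s', t''_k) = (1, t')$ at $(r', i) = (t', 0)$, for which $n_{1, t'}(t', 0) = 1$. By the final remark after Proposition \ref{prop-coho-n0} we have $S_{\pi_{v, u'}}(1, t')(t', 0) = R_{\pi_{v, u'}}(1, t')(t', 0)$, and this contribution is non-zero modulo $l$ since $m(\Pi') d_\xi(\Pi'_\oo) > 0$ and the Steinberg factor $\st_{t'}(\pi'_{v, u'})$ reduces non-trivially by Proposition \ref{prop-red-Whittaker}.

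The next step is to use Propositions \ref{prop-stl} and \ref{prop-red-Whittaker} to match the maximal Whittaker constituents on both sides. The unique non-degenerate subquotient of $r_l(\st_{t'}(\pi_{v, u'}))$ is $\st_{t'}(\varrho_{u'}) = \st_{t' m(\varrho) l^{u'}}(\varrho_{-1})$, which coincides (same supercuspidal support) with $\st_{t m(\varrho) l^u}(\varrho_{-1}) = \st_t(\varrho_u)$, the unique non-degenerate subquotient of $r_l(\st_t(\pi_{v, u}))$. By Proposition \ref{prop-red-Whittaker}, any $\Pi \in \AC_{\xi, \pi_{v, u}}(s + t_k - 1, s)$ with $s \geq 2$ has a $\speh$-type local component at $v$ whose reduction has strictly lower Whittaker level than $t' g'$. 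Hence, projecting the identity of Proposition \ref{rl-hi} onto the $\bar \Fm_l$-isotypic component of representations whose $v$-component admits $\st_{t m(\varrho) l^u}(\varrho_{-1})$ as a non-degenerate subquotient, only contributions from $\Pi \in \AC_{\xi, \pi_{v, u}}(t, 1)$ survive on the left-hand side and only those from $\Pi'' \in \AC_{\xi, \pi_{v, u'}}(t', 1)$ on the right.

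Projecting further onto an $\HC_T$-isotypic component for a sufficiently large finite set $T$ of unramified places chosen so as to isolate the $T$-congruence class of $\Pi'$, the equality reduces schematically to
\addtocounter{smfthm}{1}
\begin{multline*}
l^{u'-u} \sum_{\substack{\Pi \in \AC_{\xi, \pi_{v, u}}(t, 1) \\ \Pi \equiv_T \Pi'}} \varepsilon(\Pi) \, m(\Pi) \, d_\xi(\Pi_\oo) \\
= \sum_{\substack{\Pi'' \in \AC_{\xi, \pi_{v, u'}}(t', 1) \\ \Pi'' \equiv_T \Pi'}} \varepsilon(\Pi'') \, m(\Pi'') \, d_\xi(\Pi''_\oo),
\end{multline*}
with $\varepsilon(\cdot) \in \{\pm 1\}$. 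Since the right-hand side is non-zero (as it contains the contribution of $\Pi'$ itself), the left-hand side must also be non-zero, which forces the existence of at least one $\Pi \in \AC_{\xi, \pi_{v, u}}(t, 1)$ that is $T$-congruent to $\Pi'$. A standard finiteness argument on automorphic forms of bounded ramification, letting $T$ range over increasing finite sets of unramified places, then yields a single $\Pi$ weakly congruent to $\Pi'$ in the sense of Definition \ref{defi-faible}.

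The main obstacle is the combinatorial bookkeeping in the Whittaker/Satake projection: one must verify that the factor ``$?$'' in the local component $\st_{t'}(\pi'_{v, u'}) \times \, ?$ does not generate interfering Whittaker constituents of the same maximal level, and that the coefficient $l^{u'-u}$ precisely accounts for the $l$-adic discrepancy between the $\iota$-components appearing in the reduction of $\tau = \pi[s]_D$ described in Proposition \ref{prop-red-D}. These verifications rely on the explicit formula of Corollary \ref{coro-coho0} and on the multiplicativity of the $\red_{\tau_v}$ functors recalled in the proof of Proposition \ref{prop-coho-gen2}.
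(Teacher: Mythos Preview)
Your overall strategy is right: invoke Proposition \ref{rl-hi} with $r'=t'$, then use a non-degeneracy (Whittaker-level) argument to kill all contributions with $s\geq 2$, leaving only the $s=1$ terms at $i=0$ with positive sign. But the place where you run the non-degeneracy argument is wrong, and this is not just a bookkeeping issue.

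You try to project at the place $v$ itself, isolating constituents whose ``$v$-component admits $\st_t(\varrho_u)$ as a non-degenerate subquotient''. This does not type-check: in the identity of Proposition \ref{rl-hi} the local piece at $v$ is $S_{\pi_{v,u}}(s,t)(r,i)(\Pi_v)$, a representation of $GL_{d-rg}(F_v)$, whereas $\st_t(\varrho_u)$ lives on $GL_{rg}(F_v)$. Even if you reformulate this as a condition on the Whittaker level of the $GL_{d-rg}(F_v)$-piece, you then have to control the Whittaker level of $S_{\pi_{v,u}}(s,t)(r,i)$ for every $(s,t,i)$ with $s\geq 2$ and deal with the factor ``$?$'', which you yourself flag as an unresolved obstacle. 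The paper sidesteps all of this by choosing an \emph{auxiliary} split place $v_0\neq v$ with $G(F_{v_0})\simeq GL_d(F_{v_0})$ and $\Pi'_{v_0}$ unramified, removing $v_0$ from $T$, and picking a constituent $\overline{\lambda_T}\otimes\rho^{T,\oo,v}\otimes\varrho_v\otimes\chi$ with $\rho^{T,\oo,v}_{v_0}$ \emph{non-degenerate}. Since $\Pi'$ has $s=1$ (Lemma \ref{lem-compo-locale}), $\Pi'_{v_0}$ is non-degenerate and such a constituent exists. Any $\Pi$ with $s\geq 2$ has $\Pi_{v_0}\simeq\speh_s(\cdots)$, whose reduction modulo $l$ has no non-degenerate constituent (remark after Proposition \ref{prop-red-Whittaker}); hence all $s\geq 2$ contributions vanish at once, on both sides, with no analysis of $S_{\pi_v}$ required.

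There is a second gap in your sign argument. Writing $\varepsilon(\cdot)\in\{\pm 1\}$ and then asserting the right-hand side is non-zero ``as it contains the contribution of $\Pi'$'' is not enough: a single term could be cancelled. The point is that once only $s=1$ survives, the condition $n_{1,t}(r,i)=1$ forces $i=0$ (the Steinberg diagram collapses to the horizontal axis), so every surviving term carries the sign $(-1)^0=+1$. This is why the chosen constituent appears with \emph{strictly positive} multiplicity on the right, hence on the left, giving the desired $\Pi$. Your finiteness argument letting $T$ vary is also unnecessary: a single $T$ already suffices for weak congruence in the sense of Definition \ref{defi-faible}.
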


\begin{proof} 
Soit $T'$ l'ensemble des places finies $w$ de $F$ telles que $G(\Qm_w)$ est non ramifiÈ,
$G(\Zm_w)$ est un sous-groupe compact ouvert maximal de $G(\Qm_w)$ et 
$\Pi'_w$ est non ramifiÈe. Fixons $w_0 \in T'$ dÈcomposÈe dans $E$ et $v_0$ une
place de $F$ au dessus de $w_0$ telle que, avec la notation \ref{nota-GFv}, $G(F_{v_0})
\simeq GL_d(F_{v_0})$; on note $T=T'-\{ v_0 \}$. 
Notons $\lambda_T$ le caractËre de Satake de $\Pi'_T$
et soit $\overline{\lambda_T} \otimes \rho^{T,\oo,v} \otimes \varrho_v \otimes \chi$ 
un constituant irrÈductible de
niveau $T$ de $r_l\Bigl ( \Pi^{',\oo,v} \otimes S_{\pi_{v,u'}}(s',t')(r',i)(\Pi'_v) \Bigr )$ avec $\rho^{T,\oo,v}_{v_0}$ 
une reprÈsentation non dÈgÈnÈrÈe de $GL_d(F_{v_0})$. Pour toute reprÈsentation irrÈductible
cuspidale $\tilde \pi_v$ de $GL_{\tilde g}(F_v)$ et pour tout
$\Pi \in \AC_{\xi,\tilde \pi_v}(r,s)$ avec $s>1$ et $r$ quelconques,
$\rho^{T,\oo,v}_{v_0}$ n'est pas un constituant de $r_l(\Pi_{v_0})$, de sorte qu'en particulier pour tout
$i \neq 0$, la rÈduction modulo $l$ de $H^i(\FC_{\xi,\bar \Zm_l}(r, \tilde \pi_v)_1[d-r\tilde g])$ ne contient pas
 $\overline{\lambda_T} \otimes \rho^{T,\oo,v} \otimes \varrho_v \otimes \chi$.

Ainsi dans l'ÈgalitÈ de la proposition \ref{rl-hi} pour $r'=t'$,  $\overline{\lambda_T} \otimes \rho^{T,\oo,v} \otimes 
\varrho_v \otimes \chi$ apparait avec un signe strictement positif dans le membre de droite et donc dans
le membre de gauche de sorte qu'il existe $\Pi$
dont la rÈduction modulo $l$ de niveau $T$ est supÈrieure ou Ègale ‡ $\overline{\lambda_T} 
\otimes \rho^{T,\oo} \otimes \varrho_v \otimes \chi$.
\end{proof}

\subsection{Augmentation de l'irrÈductibilitÈ globale}
\label{para-supersingulier}

Dans le cas o˘ les faisceaux pervers dont la cohomologie donne l'ÈgalitÈ de \ref{rl-hi}, 
sont ‡ support dans les points supersinguliers, i.e. $rg_u=r'g_{u'}=d$, cette ÈgalitÈ s'Ècrit comme suit.

\begin{coro} 
Pour $(r,r')=(s,s')$ tels que $sg_u=s'g_{u'}=d$, \ref{rl-hi} s'Ècrit
\begin{multline*}
l^{u'-u} \sum_{\Pi \in \AC_{\xi,\pi_{v,u}}(s,1) \cup \AC_{\xi,\pi_{v,u}}(s,s)} m(\Pi)
d_\xi(\Pi_\oo) r_l(\Pi^{\oo,v}) \\ = 
\sum_{\Pi' \in \AC_{\xi,\pi_{v,u'}}(s',1)) \cup \AC_{\xi,\pi_{v,u'}}(s',s')} m(\Pi') 
d_\xi(\Pi'_\oo) r_l(\Pi^{',\oo,v}).
\end{multline*}
\end{coro}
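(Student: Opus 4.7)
The plan is to specialize Proposition~\ref{rl-hi} to the supersingular case by taking $r = s = d/g_u$ and $r' = s' = d/g_{u'}$, and to pin down which pairs $(s_0, t_0)$ (playing the role of the inner summation indices) can actually contribute on each side of the equality.

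First I would combine the two natural constraints on a contributing pair $(s_0, t_0)$ on the left-hand side: the non-vanishing condition $n_{s_0, t_0}(s, i) = 1$ forces the point $(s, i)$ to lie in the convex hull with vertices $(s_0 + t_0 - 1, 0)$, $(s_0, 0)$, $(1, s_0 - 1)$, $(t_0, s_0 - 1)$, whose rightmost abscissa is $s_0 + t_0 - 1$, so $s \leq s_0 + t_0 - 1$; while the existence of $\Pi \in \AC_{\xi, \pi_{v,u}}(s_0 + t_0 - 1, s_0)$ requires that the local component $\speh_{s_0}(\st_{t_0}(\pi'_v)) \times ?$ embeds in $GL_d(F_v)$, i.e.\ $s_0 t_0 g_u \leq d = s g_u$, whence $s_0 t_0 \leq s$. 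Combining these two inequalities yields $s_0 t_0 \leq s_0 + t_0 - 1$, equivalently $(s_0 - 1)(t_0 - 1) \leq 0$, which forces $s_0 = 1$ or $t_0 = 1$.

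The case $s_0 = 1$ then forces $t_0 = s$, giving $\speh_1(\st_s(\pi'_v)) = \st_s(\pi'_v)$, i.e.\ an element of $\AC_{\xi, \pi_{v,u}}(s, 1)$; symmetrically $t_0 = 1$ forces $s_0 = s$ and yields $\speh_s(\pi'_v)$, i.e.\ an element of $\AC_{\xi, \pi_{v,u}}(s, s)$. In both extremal cases the defining quadrilateral degenerates to a line segment whose only point of abscissa $s$ is the vertex $(s, 0)$; consequently only the index $i = 0$ survives in the alternating sum. The character twist $\Xi^{(2i + r - s_0 - t_0 + 1)/2}$ evaluated at $i = 0$ and $r = s_0 + t_0 - 1 = s$ is trivial, and since the supersingular stratum $\overline X^{=d}_\IC$ is zero-dimensional the factor $S_{\pi_{v,u}}(s_0, t_0)(s, 0)$ lives in $GL_0(F_v)$ and reduces to a scalar, so each term on the left collapses to $m(\Pi)\, d_\xi(\Pi_\oo)\, r_l(\Pi^{\oo, v})$ with $\Pi$ ranging over $\AC_{\xi, \pi_{v,u}}(s, 1) \cup \AC_{\xi, \pi_{v,u}}(s, s)$.

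Running the symmetric analysis on the right-hand side with primes produces the analogous expression, the global constants $e_{\pi_v} \sharp\ker^1(\Qm, G)/d$ built into Corollary~\ref{coro-coho0} being treated identically on both sides of \ref{rl-hi}. The only conceptual input is the elementary inequality $(s_0 - 1)(t_0 - 1) \leq 0$ pinning the sum to the two named classes; the rest is a direct unwinding of \ref{rl-hi}, \ref{nota-AC} and \ref{coro-coho0}, so the corollary follows at once. The main thing to be careful about is bookkeeping the two distinct roles played by the letter $s$ (as the codimension $d/g_u$ on the one hand and as an inner summation index on the other), which is why I relabel the latter $s_0$.
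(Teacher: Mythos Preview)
Your argument is correct and is precisely the natural verification the paper leaves implicit: the corollary is stated without proof, as an immediate specialization of \ref{rl-hi} when $rg_u=r'g_{u'}=d$. The key combinatorial step you isolate---that $s_0t_0\leq s\leq s_0+t_0-1$ forces $(s_0-1)(t_0-1)\leq 0$, hence $(s_0,t_0)\in\{(1,s),(s,1)\}$ with $i=0$ the only surviving index---is exactly what underlies the paper's parallel discussion in \S\ref{para-ramifie} around (\ref{eq-cas-ss}), where the same two cases $\AC_{\xi,\pi_v}(s_g,1)$ and $\AC_{\xi,\pi_v}(s_g,s_g)$ are singled out with the remark that on the supersingular stratum the cohomology is concentrated in middle degree.
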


\rem l'ÈgalitÈ est aussi valable dans le cas $d<(s+1)g$.

\begin{coro} \label{coro-congru2}
Supposons que dans le corollaire prÈcÈdent on ait $s'=1$ et
soit 
$\overline{\lambda_T} \otimes \rho^{T,\oo} \otimes \chi$ un sous-quotient irrÈductible de 
niveau $T$ de la rÈduction modulo $l$ de $\Pi$
o˘ $\Pi \in \AC_{\xi,\pi_{v,u}}(s,s)$. Il existe alors une reprÈsentation irrÈductible 
$\Pi' \in \AC_{\xi,\pi_{v,u'}}(1,1)$ dont 
la rÈduction modulo $l$ de niveau $T$ est supÈrieure ou Ègale ‡ 
$\overline{\lambda_T} \otimes \rho^{T,\oo,v} \otimes \chi$.
\end{coro}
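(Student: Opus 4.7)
The plan is to specialize the identity of Proposition \ref{rl-hi} to the supersingular case $rg_u = r'g_{u'} = d$ and to $s'=1$, as stated in the corollary just above. In that setting the sheaves $\FC_{\xi,\bar\Zm_l}(r,\pi_{v,u})$ and $\FC_{\xi,\bar\Zm_l}(r',\pi_{v,u'})$ live on the $0$-dimensional supersingular stratum, so their cohomology is concentrated in a single degree; the alternating sum of Proposition \ref{rl-hi} collapses, and both sides of the resulting equality become non-negative integral combinations of level-$T$ reductions $r_l(\Pi^{\oo,v})$ and $r_l(\Pi^{',\oo,v})$, weighted by $l^{u'-u}m(\Pi)d_\xi(\Pi_\oo)$ on the left and by $m(\Pi')d_\xi(\Pi'_\oo)$ on the right.

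First I would note that the condition $s'g_{u'}=d$ with $s'=1$ forces $g_{u'}=d$, so every $\Pi' \in \AC_{\xi,\pi_{v,u'}}(1,1)$ that appears on the right-hand side has local component at $v$ an irreducible cuspidal of $GL_d(F_v)$ inertially equivalent to $\pi_{v,u'}$. On the left-hand side, the given $\Pi \in \AC_{\xi,\pi_{v,u}}(s,s)$ contributes with the strictly positive coefficient $l^{u'-u}m(\Pi)d_\xi(\Pi_\oo)$, and by hypothesis $\overline{\lambda_T} \otimes \rho^{T,\oo,v} \otimes \chi$ is an irreducible level-$T$ constituent of $r_l(\Pi^{\oo,v})$. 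The remaining summands on the left, whether from $\AC_{\xi,\pi_{v,u}}(s,s)$ or from the Steinberg-type set $\AC_{\xi,\pi_{v,u}}(s,1)$, carry non-negative coefficients, so no cancellation can occur: the multiplicity of $\overline{\lambda_T} \otimes \rho^{T,\oo,v} \otimes \chi$ in the level-$T$ reduction of the left-hand side is strictly positive.

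Transporting the equality across to the right-hand side, the same irreducible $\overline{\lambda_T} \otimes \rho^{T,\oo,v} \otimes \chi$ must occur with strictly positive multiplicity there as well. Since the right-hand side is a finite positive integral combination of $r_l(\Pi^{',\oo,v})$ indexed by $\Pi' \in \AC_{\xi,\pi_{v,u'}}(1,1)$, at least one such $\Pi'$ must satisfy that $r_l(\Pi^{',\oo,v})$ dominates $\overline{\lambda_T} \otimes \rho^{T,\oo,v} \otimes \chi$ at level $T$, which is exactly the desired conclusion.

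The main---and essentially only---delicate point is to justify the non-negativity of both sides in the supersingular case. Once this is in hand, no auxiliary place of non-degeneracy needs to be introduced, in contrast with the proof of Proposition \ref{prop-congru2}: the restriction $s'=1$ already forces every representation contributing to the right-hand side to have an irreducible cuspidal, hence non-degenerate, local component at $v$, so any irreducible subquotient tracked on the left is automatically transferred to the right and produces a $\Pi' \in \AC_{\xi,\pi_{v,u'}}(1,1)$ with the required property.
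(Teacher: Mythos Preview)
Your argument is correct and matches the paper's intended reasoning: the corollary is stated without proof because it is an immediate consequence of the preceding corollary, which already rewrites Proposition \ref{rl-hi} in the supersingular case $sg_u=s'g_{u'}=d$ as an equality of two non-negative sums; with $s'=1$ the right-hand side ranges only over $\AC_{\xi,\pi_{v,u'}}(1,1)$, and the positive contribution of the given $\Pi\in\AC_{\xi,\pi_{v,u}}(s,s)$ on the left forces the existence of the desired $\Pi'$ on the right. Your remark that no auxiliary place is needed here, unlike in the proof of Proposition \ref{prop-congru2}, is also to the point.
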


\rem le thÈorËme 2 de \cite{sorensen} Ètablit des congruences entre une reprÈsentation $\Pi$ 
non ramifiÈe en $v$ telle son paramËtre de Satake est congruent ‡ celui de la triviale, 
et des reprÈsentations $\Pi'$ ramifiÈes en $v$, par exemple une Steinberg. Le corollaire
prÈcÈdent pour $g=1$ et $\pi_{v,u}=1_v$ construit une telle congruence sauf que 
nÈcessairement dans notre situation,
la reprÈsentation $\Pi$ est un caractËre. Notre technique ne permet pas passer d'un $\Pi_v \simeq \chi_1 \times \chi_2$ 
‡ une Steinberg; en revanche elle sait traiter d'autres cas que celui des paramËtres de
Satake congruent ‡ celui de la triviale.

\rem on peut voir le corollaire prÈcÈdent comme \emph{une augmentation de 
l'irrÈductibilitÈ globale} du cÙtÈ galoisien puisque, via la correspondance de Langlands
globale, ‡ $\Pi \in \AC_{\xi,\pi_{v,u}}(s,s)$ (resp. ‡ $\Pi' \in \AC_{\xi,\pi_{v,u'}}(1,1)$ tel qu'en 
une place $w$ quelconque $\Pi'_w$ est cuspidale)
correspond une reprÈsentation galoisienne
qui est la somme directe de $\frac{d}{s}$ reprÈsentations (resp. irrÈductible).
De ce point de vue l'ÈgalitÈ de la proposition \ref{rl-hi} peut s˚rement fournir d'autres 
rÈsultats intÈressants. Par exemple pour $r=s-1$ et $r'=s'$, elle s'Ècrit:
$$\begin{array}{l}
l^{u'-u} \sum_{\Pi \in \AC_{\xi,\pi_{v,u}}(s,1)} m(\Pi)d_\xi(\Pi_\oo) r_l \Bigl ( 
\Pi^{\oo,v} \otimes \tilde \pi_{v,u} \{ \frac{1-s}{2} \} \Bigr ) \\
+  l^{u'-u}  \sum_{\pi \in \AC_{\xi,\pi_{v,u}}(s-1,s-1)} m(\Pi)d_\xi(\Pi_\oo) r_l \Bigl ( 
\Pi^{\oo,v} \otimes  S_{\pi_{v,u}}(s-1,s-1)(s-1,0)(\Pi_v) \Bigr ) \\
+ l^{u'-u}   \sum_{\Pi \in \AC_{\xi,\pi_{v,u}}(s-1,1)} 
m(\Pi)d_\xi(\Pi_\oo) r_l \Bigl ( \Pi^{\oo,v} \otimes S_{\pi_{v,u}}(s-1,s-1)(s-1,0)(\Pi_v) \Bigr )
\\= \\ 
\sum_{\Pi' \in \AC_{\xi,\pi_{v,u'}}(s',1))}  m(\Pi')d_\xi(\Pi'_\oo) 
r_l \Bigl ( \Pi^{'\oo,v} \otimes S_{\pi_{v,u'}}(s',1) (s',0)(\Pi^{'\oo,v}) \Bigr ) \\
+  \sum_{\Pi' \in \AC_{\xi,\pi_{v,u'}}(s',s')} m(\Pi')d_\xi(\Pi'_\oo) 
r_l \Bigl ( \Pi^{',\oo,v} \otimes S_{\pi_{v,u'}}(s',1) (s',0)(\Pi^{'\oo,v}) \Bigr ) \\
+ l^{u'-u}  \sum_{\Pi \in \AC_{\xi,\pi_{v,u}}(s,s)} m(\Pi)d_\xi(\Pi_\oo) r_l \Bigl ( 
\Pi^{\oo,v} \otimes \tilde \pi_{v,u} \{ \frac{s-1}{2} \} \Bigr )
\end{array}$$
Ainsi les sous-quotients de niveau $T$ de la rÈduction modulo $l$ des
reprÈsentations de $\AC_{\xi,\pi_{v,u}}(s,s)$ doivent aussi apparaÓtre dans le membre de gauche
de l'ÈgalitÈ prÈcÈdente. En regardant en $v$, comme $\pi_{v,u}$ est cuspidale il ne peut
pas Ítre un sous-quotient de la rÈduction modulo $l$ d'une reprÈsentation de la forme 
$\speh_{s-1}(\pi')$. Ainsi on obtient des congruences automorphes entre les $\Pi \in \AC_{\xi,\pi_{v,u}}(s,s)$
et les $\Pi' \in \AC_{\xi,\pi_{v,u}}(s,1) \cup \AC_{\xi,\pi_{v,u}}(s-1,1)$ ce qui s'interprËte 
‡ nouveau en termes  \emph{d'augmentation de l'irrÈductibilitÈ globale} du 
cÙtÈ galoisien

\subsection{Torsion dans la cohomologie des faisceaux pervers d'Harris-Taylor}
\label{para-torsion}

Nous reprenons les notations prÈcÈdentes avec $u'>u= -1$.

\begin{prop} \label{prop-torsion} 
Pour tout $r' \geq 1$ tel que $r'g_{u'}\leq d-g_{-1}$,
la cohomologie de $j_{!}^{\geq r'g_{u'}} \FC_{\xi, \bar \Zm_l}(r',\pi_{v,u'})[d-r'g_{u'}]$ et de 
$j_{*}^{\geq r'g_{u'}} \FC_{\xi, \bar \Zm_l}(r',\pi_{v,u'})[d-r'g_{u'}]$
a de la torsion.
\end{prop}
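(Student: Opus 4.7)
The plan is to argue by contradiction, combining the reduction-mod-$l$ identity (\ref{eq-F-chgt2}) with the explicit description of the $\bar \Qm_l$-cohomology provided by Corollary \ref{coro-coho0}. Set $u=-1$ and $r = r'm(\varrho)l^{u'}$, so that $rg_{-1}=r'g_{u'}$; write $K=j_!^{\geq r'g_{u'}}\FC_{\xi,\bar \Zm_l}(r',\pi_{v,u'})[d-r'g_{u'}]$ and $K'=j_!^{\geq rg_{-1}}\FC_{\xi,\bar \Zm_l}(r,\pi_{v,-1})[d-rg_{-1}]$. Under the hypothesis that $H^\bullet(K)$ is torsion-free in every degree, universal coefficients together with Lemma \ref{lem-foncteur} turn Proposition \ref{rl-hi} into the Euler--Poincar\'e identity $m(\varrho)l^{u'} \sum_i(-1)^i r_l H^i(K'_{\bar \Qm_l}) = \sum_i(-1)^i r_l H^i(K_{\bar \Qm_l})$ in $\GF_{\bar \Fm_l}(v,d-r'g_{u'})$. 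The aim is to exhibit an irreducible $\bar \Fm_l$-constituent $\tau$ whose multiplicity on the right-hand side is non-zero modulo $l$, while on the left-hand side it vanishes.

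Using the hypothesis $r'g_{u'}\leq d-g_{-1}$, which ensures sufficient room in $GL_d(F_v)$ to accommodate a Steinberg $\st_{r'}(\pi_{v,u'})$ together with an auxiliary factor $\rho_v$ of dimension at least $g_{-1}$, I would choose an automorphic representation $\Pi \in \AC_{\xi,\pi_{v,u'}}(r',1)$ with $\Pi_v \simeq \st_{r'}(\pi_{v,u'}) \times \rho_v$. Proposition \ref{prop-coho-n0} in the Steinberg case ($s=1$, $t=r'$) shows that $\Pi$ contributes only to $H^0(K_{\bar \Qm_l})$, its local component at $v$ being $R_{\pi_{v,u'}}(1,r')(r',0)(\Pi_v)$ which, by the Jacquet-corner formula, contains $\pi_{v,u'}\{(1-r')/2\} \otimes \rho_v$ as a subquotient. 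I take $\tau$ to be an irreducible constituent of the reduction mod $l$ of this term, featuring the $\bar \Fm_l$-cuspidal $\varrho_{u'}\{(1-r')/2\}$ as its leading segment at $v$; for a generic choice of $\Pi$, the coefficient $\frac{e_{\pi_{v,u'}}\,\sharp\ker^1(\Qm,G)}{d}\,m(\Pi)\,d_\xi(\Pi_\oo)$ will be a non-zero integer modulo $l$.

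The decisive step consists in showing that $\tau$ cannot appear in any $r_l H^i(K'_{\bar \Qm_l})$. Every contribution on the left-hand side originates from some $\Pi' \in \AC_{\xi,\pi_{v,-1}}(s+t-1,s)$ whose local factor at $v$ is built, via Corollary \ref{coro-coho0}, from $\speh_s(\st_{t_k}(\pi_{k,v}))$ and their Jacquet sub-quotients with $\pi_{k,v}$ inertially equivalent to $\pi_{v,-1}$. The hard part will be to enumerate all admissible tuples $(s,t,r,i)$ with $n_{s,t}(r,i)=1$ and to check, for each, that the modular reduction of the corresponding local piece cannot produce an irreducible factor $\varrho_{u'}\{(1-r')/2\}$ at $v$ with multiplicity indivisible by $m(\varrho)l^{u'}$. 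The essential tools are Proposition \ref{prop-red-Whittaker} (controlling Whittaker levels of modular reductions), the Zelevinsky-line structure of $\varrho_{-1}$ (of which $\varrho_{u'}=\st_{m(\varrho)l^{u'}}(\varrho_{-1})$ is a derived cuspidal), together with the observation that reductions of $\speh_s(\pi_{v,-1}\{c\})$ are irreducible but Speh-like (never cuspidal), while the only place $\varrho_{u'}$ can appear directly---namely in reductions of $\st_{m(\varrho)l^{u'}}(\pi_{v,-1})$---forces $r$ to equal $m(\varrho)l^{u'}$ (i.e.\ $r'=1$), in which case the combinatorial coefficient from Corollary \ref{coro-coho0} gets absorbed into the prefactor $m(\varrho)l^{u'}$.

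Once the ruling-out is established, Proposition \ref{rl-hi} forces the multiplicity of $\tau$ on the right-hand side to be a multiple of $m(\varrho)l^{u'}$, contradicting the non-zero-mod-$l$ coefficient isolated in the second step; consequently $H^\bullet(K)$ must carry torsion. The statement for $j_*$ will then follow by Verdier duality, which exchanges $j_!$ and $j_*$ and dualises the local systems while preserving the torsion content of cohomology.
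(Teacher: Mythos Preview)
Your argument has a structural gap that cannot be repaired within the alternating-sum framework you set up. The identity
\[
m(\varrho)l^{u'}\sum_i(-1)^i\, r_l H^i(K'_{\bar\Qm_l})=\sum_i(-1)^i\, r_l H^i(K_{\bar\Qm_l})
\]
is exactly Proposition~\ref{rl-hi}; it holds \emph{unconditionally}, because Euler characteristics are insensitive to torsion (this is the content of Lemma~\ref{lem-foncteur}). Your torsion-free hypothesis on $H^\bullet(K)$ therefore plays no role in establishing it, and hence cannot be contradicted by it. Concretely: if your ``decisive step'' were correct --- i.e.\ the multiplicity of $\tau$ on the left is zero while on the right it is non-zero mod~$l$ --- you would have disproved Proposition~\ref{rl-hi} outright, not the torsion-free hypothesis. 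In fact the ruling-out fails: by Proposition~\ref{prop-congru2} there exist $\Pi'\in\AC_{\xi,\pi_{v,-1}}(r,1)$ congruent to your $\Pi\in\AC_{\xi,\pi_{v,u'}}(r',1)$, and the shared non-degenerate constituent $\st_r(\varrho_{-1})$ of $r_l(\st_r(\pi_{v,-1}))$ and $r_l(\st_{r'}(\pi_{v,u'}))$ makes $\tau$ appear on the left side as well.

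The paper's proof avoids alternating sums entirely and argues \emph{degree by degree}. From (\ref{eq-chgt-cuspi}) one has $[\Fm K]=m(\varrho)l^{u'}[\Fm K']$ as $\bar\Fm_l$-complexes, so the top non-vanishing degree $i_0$ of $H^\bullet(\Fm K)$ equals that of $H^\bullet(\Fm K')$. By Proposition~\ref{prop-coho-n0} (case $t=1$) the free part of $H^{s-r}(K')$ is non-zero, where $s=\lfloor d/g_{-1}\rfloor$, hence $i_0\geq s-r$. On the other hand the free part of $H^i(K)$ vanishes for $i>s'-r'$ with $s'=\lfloor d/g_{u'}\rfloor$. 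The hypothesis $r'g_{u'}\leq d-g_{-1}$ gives $s-r>s'-r'$, so $H^{i_0}(K)$ has trivial free part; combined with $H^{i_0}(\Fm K)\neq 0$ and the universal-coefficient sequence (\ref{eq-ss-torsion}), this forces $H^{i_0}(K)$ to be non-zero torsion. The $j_*$ statement then follows by Verdier duality. The key idea you are missing is that torsion is detected by a mismatch between the \emph{range of degrees} where $H^\bullet(\Fm K)$ lives and where the free part of $H^\bullet(K)$ lives --- information that the alternating sum destroys.
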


\begin{proof} 
Rappelons que pour un $\Fm_q$-schÈma $X$ quelconque et un $\Zm_l$-faisceau pervers sans torsion $\PC$,
on a la suite exacte courte suivante
\addtocounter{smfthm}{1}
\begin{equation} \label{eq-ss-torsion}
0 \to H^n(X,\PC) \otimes_{\Zm_l} \Fm_l \longto H^n(X,\Fm \PC ) \longto H^{n+1}(X,\PC) [l] \to 0.
\end{equation}
On note $r$ tel que $rg_{-1}=r'g_{u'}$ et soit $s=\lfloor \frac{d}{g_{-1}} \rfloor$.
D'aprËs le cas $t=1$ de la proposition \ref{prop-coho-n0}, la partie libre de 
$H^{s-r}(j^{\geq rg_{-1}}_! \FC_{\xi,\bar \Zm_l}(r,\pi_{v,-1})[d-rg_{-1}])$ est non nulle.
Soit alors $i_0 \geq s-r$ l'entier $i$ maximal tel que 
$H^{i}(j^{\geq rg_{-1}}_! \FC_{\xi,\bar \Zm_l}(r,\pi_{v,-1})[d-rg_{-1}])$ est non nul; 
de la suite exacte (\ref{eq-ss-torsion}) on en
dÈduit que $i_0$ est aussi l'entier le plus grand tel que $H^i (j^{\geq rg_{-1}}_! \Fm 
\FC_{\xi,\bar \Zm_l}(r,\pi_{v,-1})[d-rg_{-1}])$ est non nul. 

\begin{itemize}
\item De (\ref{eq-chgt-cuspi}),
on dÈduit une filtration de $\Fm \bigl (j^{\geq r'g_{u'}}_! \FC_{\xi,\bar \Zm_l}(r',\pi_{v,u'}) \bigr )$
par des $j^{\geq rg_{-1}}_! \Fm \FC_{\xi,\bar \Zm_l}(r,\pi_{v,-1})$. 

\item De la suite spectrale associÈe qui calcule les 
$$H^i \Bigl ( \Fm \bigl (j^{\geq r'g_{u'}}_! \FC_{\xi,\bar \Zm_l}(r',\pi_{v,u'})[d-r'g_{u'}] \bigr ) \Bigr ),$$ 
on en dÈduit que $i_0$ est aussi le plus grand entier tel que ce groupe de cohomologie est non nul.
\end{itemize}
D'aprËs \ref{prop-coho-n0}, pour $s'=\lfloor \frac{d}{g_{u'}} \rfloor$,
le quotient libre de $H^i \Bigl ( j^{\geq r'g_{u'}}_! \FC_{\xi,\bar \Zm_l}(r',\pi_{v,u'})[d-r'g_{u'}] \Bigr )$ est nul, dËs que $i>s'-r'$.
Or comme par hypothËse $rg_{-1}+g_{-1} \leq d$, on a $i_0 \geq s-r> s'-r'$ et donc d'aprËs (\ref{eq-ss-torsion})
la torsion de $H^{i_0} ( j^{\geq r'g_{u'}}_! \FC_{\xi,\bar \Zm_l}(r',\pi_{v,u'})[d-r'g_{u'}])$ est non nulle. Par dualitÈ on en dÈduit que $H^{-i_0+1}( j^{\geq r'g_{u'}}_* \FC_{\xi,\bar \Zm_l}(r',\pi_{v,u'})[d-r'g_{u'}])$ a aussi de la torsion.
\end{proof}

\rem 
avec les notations de la preuve prÈcÈdente, si $i_0>s-r$ alors 
$j^{\geq rg_{-1}}_! \FC_{\xi,\bar \Zm_l}(r,\pi_{v,-1})[d-rg_{-1}]$ a aussi 
de la torsion dans sa cohomologie.

\section{ReprÈsentations automorphes fortement congruentes}
\label{para-rep-congru-forte}

\subsection{DÈfinition de la notion de congruence au sens fort.}

\begin{defi} \label{defi-congru-forte}
Deux $\bar \Qm_l$-reprÈsentations irrÈductibles entiËres $\Pi$ et $\Pi'$ de $GL_d(\Am_F)$ 
sont dites \emph{fortement congruentes modulo $l$} si pour toute place finie $w$ de $F$ telle 
que $\Pi_w$ et $\Pi'_w$ sont non ramifiÈes, les rÈductions modulo $l$ de $\Pi_w$ et $\Pi'_w$ 
sont Ègales.
\end{defi}

\rem si pour une place $w$ o˘ $\Pi$ et $\Pi'$ sont non ramifiÈes, $r_l(\Pi_w)$ et 
$r_l(\Pi'_w)$ sont Ègales alors la rÈduction modulo 
$l$ de leurs caractËres de Satake aussi, de sorte que deux reprÈsentations fortement
congruentes sont aussi faiblement congruentes au sens du \S \ref{para-formulation}. 
En ce qui concerne la rÈciproque remarquons que:
\begin{itemize}
\item si $\Pi_w$ et $\Pi'_w$ sont irrÈductibles non ramifiÈes et non dÈgÈnÈrÈes 
alors $r_l(\Pi_w)= r_l(\Pi'_w)$ si et seulement si leurs caractËres de Satake sont congrus 
modulo $l$. 

\item Plus gÈnÈralement si $\Pi_w$ et $\Pi'_w$ sont irrÈductibles non ramifiÈes,
respectivement de la forme $\speh_s(\pi_w)$ et $\speh_s(\pi'_w)$ avec $\pi_w$ et 
$\pi'_w$ non dÈgÈnÈrÈes, alors $r_l(\Pi_w)=r_l(\Pi'_w)$ si et seulement si leurs caractËres
de Satake sont congrus modulo $l$.

\item Par contre $\Pi_w\simeq \chi_1 \times \cdots \times \chi_d$ irrÈductible non ramifiÈe 
et non dÈgÈnÈrÈe peut trËs bien avoir un caractËre de Satake congru modulo $l$ ‡ celui 
de la triviale.
\end{itemize}

\begin{defi} \label{defi-forte-congru}
Deux $\bar \Qm_l$-reprÈsentations irrÈductibles entiËres $\Pi$ et $\Pi'$ de $G(\Am)$ 
sont dites \emph{fortement congruentes modulo $l$} si les reprÈsentations
$\Pi_{GL}$ et $\Pi'_{GL}$ de $GL_d(\Am_F)$ obtenues, ‡ partir de respectivement
$\Pi$ et $\Pi'$, par la correspondance de Jacquet-Langlands de leur changement de base,
cf. le \S VI de \cite{h-t}, sont fortement congruentes au sens de la dÈfinition prÈcÈdente.
\end{defi}

\rem il rÈsulte des remarques prÈcÈdentes, que si $\Pi$ et $\Pi'$ sont fortement congruentes
alors elles sont faiblement congruentes au sens du \S \ref{para-formulation}.

\begin{prop} \label{prop-forte-congru}
Soient $\Pi$ et $\Pi'$ des $\bar \Qm_l$-reprÈsentations irrÈductibles automorphes 
de $G(\Am)$, entiËres et $\xi$-cohomologiques. On note $T$ l'ensemble des places
finies de $\Qm$ o˘ $\Pi$ et $\Pi'$ sont non ramifiÈes et on suppose que 
\begin{itemize}
\item les caractËres de Hecke $\lambda_{T,\Pi}$ et $\lambda_{T,\Pi'}$ sont congrus 
modulo $l$;

\item qu'il existe une place $w_0$ de $T$ dÈcomposÈe 
$w_0=u_0u_0^c$ dans $E$ ainsi qu'une place $v_0$ de $F$ au dessus de $u_0$
telle qu'avec la notation \ref{nota-GFv}, $G(F_{v_0}) \simeq GL_d(F_{v_0})$ avec
$$\Pi_{w_0} \simeq \speh_s(\pi_{w_0}) \hbox{ et } \Pi'_{w_0} \simeq \speh_s(\pi'_{w_0})$$
o˘ $\pi_{w_0}$ et $\pi'_{w_0}$ sont des reprÈsentations irrÈductibles non dÈgÈnÈrÈes.
\end{itemize}
Alors $\Pi$ et $\Pi'$ sont fortement congruentes.
\end{prop}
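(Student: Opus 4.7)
The plan is to reduce strong congruence to congruence of mod-$l$ Satake parameters at every unramified $F$-place, derive this Satake congruence on a cofinite (hence density-one) set of places from the weak-congruence hypothesis via local base change, and then extend to the remaining unramified places by Chebotarev applied to the attached mod-$l$ Galois representations. More precisely, Lemma \ref{lem-compo-locale} together with the M{\oe}glin-Waldspurger classification of discrete automorphic representations of $GL_d(\Am_F)$ gives $\Pi_{GL} \simeq \speh_s(\pi)$ and $\Pi'_{GL} \simeq \speh_s(\pi')$ for cuspidal $\pi, \pi'$ on $GL_{d/s}(\Am_F)$, where the common integer $s$ is forced by the hypothesis at $w_0$. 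At any unramified $F$-place $w$, $\pi_w$ is spherical, hence non-d\'eg\'en\'er\'ee, so $\Pi_{GL,w} = \speh_s(\pi_w)$ and $\Pi'_{GL,w} = \speh_s(\pi'_w)$; by the second item of the remark following Definition \ref{defi-congru-forte}, the equality $r_l(\Pi_{GL,w}) = r_l(\Pi'_{GL,w})$ is equivalent to mod-$l$ congruence of the Satake characters of $\pi_w$ and $\pi'_w$. The problem therefore reduces to proving this Satake congruence at every unramified place $w$ of $F$.

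Since $T$ is cofinite in the set of finite places of $\Qm$, the $F$-places $w$ whose restriction $p = w \cap \Qm$ lies in $T$ form a cofinite, hence density-one, set of places of $F$. At any such $w$, local base change from $G(\Qm_p)$ to $GL_d(F_w)$ transports the Satake character of $\Pi_p$ to that of $\Pi_{GL,w}$; this is explicit at decomposed $p$ via the factorization $G(\Qm_p) \simeq \Qm_p^\times \times \prod_i G(F_{z_i})$, and at inert $p$ via the base-change morphism between the unramified Hecke algebras of the quasi-split unitary similitude group and of $GL_d$. Consequently, the hypothesis $\lambda_{T,\Pi} \equiv \lambda_{T,\Pi'} \pmod{l}$ yields the Satake congruence of $\pi_w, \pi'_w$ on this density-one set of unramified $F$-places.

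To extend the congruence to the finitely many remaining unramified $F$-places, namely those above $\Qm \setminus T$, I would invoke the semi-simple mod-$l$ Galois representations $\bar\sigma_{\Pi_{GL}}, \bar\sigma_{\Pi'_{GL}} : \gal(\bar F/F) \to GL_d(\bar \Fm_l)$ attached to $\Pi_{GL}$ and $\Pi'_{GL}$ by the cohomology of the Shimura varieties of \cite{h-t}, whose characteristic polynomials of Frobenius encode the local Satake parameters. Their traces agree on a density-one set of Frobenii, so Chebotarev density together with Brauer-Nesbitt yields $\bar\sigma_{\Pi_{GL}}^{ss} \simeq \bar\sigma_{\Pi'_{GL}}^{ss}$, hence Satake congruence at every unramified place of $F$; combined with the first paragraph, this completes the proof. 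The main obstacle is making the base-change transfer of Satake characters explicit at inert primes, which reduces to the standard analysis of local base change for spherical representations of quasi-split unitary groups.
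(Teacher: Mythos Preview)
Your argument is correct and follows the same core reduction as the paper: both of you use Lemma~\ref{lem-compo-locale} and the hypothesis at $w_0$ to force $\Pi_{GL}\simeq\speh_s(\pi)$ and $\Pi'_{GL}\simeq\speh_s(\pi')$ with the \emph{same} $s$, and then invoke the second item of the remark preceding Definition~\ref{defi-forte-congru} to turn mod-$l$ Satake congruence into equality of $r_l$ at every unramified $F$-place.

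The difference is one of explicitness. The paper's proof is a single sentence that takes the Satake congruence of $\Pi_{GL,w}$ and $\Pi'_{GL,w}$ at \emph{every} unramified $F$-place as given, whereas you correctly observe that the hypothesis only furnishes this above the cofinite set $T\subset\Qm$, and you close the gap for the finitely many remaining unramified $F$-places via the attached mod-$l$ Galois representations, Chebotarev, and Brauer--Nesbitt. This extra step is genuinely needed (an $F$-place $w$ with $\Pi_{GL,w}$ unramified can sit over some $x\notin T$ when another factor of $G(\Qm_x)$ is ramified), and the paper is tacitly relying on the Galois-representation machinery of \cite{h-t} here. Your write-up makes this dependence visible; the paper's buys brevity at the cost of leaving that point to the reader.
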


\begin{proof}
Il dÈcoule du deuxiËme tiret de la remarque prÈcÈdant la dÈfinition \ref{defi-forte-congru},
qu'en toute place $w$ de $F$ pour laquelle $\Pi_{GL}$ et $\Pi'_{GL}$ sont non ramifiÈes,
les rÈductions modulo $l$ de $\Pi_{GL,w}$ et $\Pi'_{GL,w}$ sont Ègales. 
\end{proof}

\subsection{\'EnoncÈ de la conjecture automorphe}
\label{para-congruence1}

DÈsormais $\pi_v$ et $\pi'_v$ seront deux reprÈsentations irrÈductibles cuspidales de 
$GL_g(F_v)$ ayant la mÍme rÈduction modulo $l$.
On se propose de montrer l'existence de couples $(\Pi,\Pi') \in \AC_{\xi,\pi_v}(r,s) \times \AC_{\xi,\pi'_v}(r,s)$ 
de reprÈsentations fortement congruentes o˘ $r_l(\pi_v)=r_l(\pi'_v)$, 
cf. la notation \ref{nota-AC},.
Afin d'allÈger les notations, pour $\Pi \in \AC_{\xi,\pi_v}(r,s)$ et pour tout $(r',i)$ tels que $m(r',i)=1$, on notera 
$t_{r',i}^{r,s} (\Pi)=\Pi^{\oo,v} \otimes \Bigl ( S_{\pi_v}(s,r-s+1)(r',i) \bigl ( \Pi_v \bigr )\Bigr ).$

\begin{conj} \label{conj}
Pour tout $r \geq s$, au sens de \ref{defi-gp-groth}, on a
$$\sum_{\Pi \in \AC_{\xi,\pi_v}(r,s)}m(\Pi) d_\xi(\Pi_\oo) r_l \Bigl ( t^{r,s}_{r,0} (\Pi) \Bigr ) =
\sum_{\Pi' \in \AC_{\xi,\pi'_v}(r,s)}m(\Pi') d_\xi(\Pi'_\oo) r_l \Bigl ( t^{r,s}_{r,0} (\Pi') \Bigr ).$$
\end{conj}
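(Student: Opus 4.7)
The plan is to derive the conjectural equality from the alternating-sum identity of Proposition \ref{rl-hi}, applied with the identifications $\pi_{v,u} = \pi_v$ and $\pi_{v,u'} = \pi'_v$. Since $r_l(\pi_v) = r_l(\pi'_v)$, both cuspidals are lifts of the same $\bar \Fm_l$-cuspidal, so $u = u'$ and the factor $l^{u'-u}$ reduces to $1$; taking also $r = r'$ yields an equality in $\GF_{\bar \Fm_l}(v, d-rg)$ between two alternating sums indexed by the pairs $(s',t')$ with $n_{s',t'}(r,i) = 1$. This identity holds for every $r$, providing a family of modular congruences on which to build.

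The term that the conjecture wants to isolate is the one indexed by $(r',i) = (r,0)$ and $(s',t') = (s, r-s+1)$ on each side. At this extremal cell two simplifications occur: the Frobenius twist $\Xi^{(2i + r - s' - t' + 1)/2}$ is trivial precisely when $(r',i) = (s'+t'-1, 0)$, and by the remark following Proposition \ref{prop-coho-n0} one has $S_{\pi_v}(s,t)(s+t-1,0) = R_{\pi_v}(s,t)(s+t-1,0)$, whose mod-$l$ class depends only on $(s,t,r,0)$ and on $r_l(\pi_v) = r_l(\pi'_v)$. A term-by-term matching of the vertex contributions is therefore plausible; the real difficulty is that the cell $(r,0)$ is simultaneously reached by many pairs $(s',t')$ satisfying $s'+t'-1 \geq r \geq s'$.

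To separate these contributions one would use a double induction. The outer induction is descending on $s'+t'-1$: for the maximum realized value $r_{\max}$, the identity at $(r_{\max},0)$ involves only pairs with $s'+t'-1 = r_{\max}$, and moving downward one at each stage subtracts the previously controlled contributions from $s''+t''-1 > r$. The inner induction, inside a fixed value $s'+t'-1 = r$, separates the various $(s',t')$ via Proposition \ref{prop-red-Whittaker}: the local component $\Pi_v$ of a representation in $\AC_{\xi,\pi_v}(r,s')$ is built from a Speh block of length $s'$, so its Whittaker level encodes $s'$ and should leave distinct signatures on the mod-$l$ constituents, allowing the corresponding contribution to be extracted.

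The main obstacle, and the reason the statement is posed as a conjecture rather than a theorem, is precisely this separation step. Several difficulties work against a clean Whittaker argument: the factor $S_{\pi_v}(s',t')(r,0)(\Pi_v)$ is a representation of $GL_{d-rg}(F_v)$ whose mod-$l$ reduction can carry constituents of Whittaker level strictly below $s'$, potentially colliding with constituents coming from other pairs $(s',t')$; the alternating-sum structure of Proposition \ref{rl-hi} permits cancellations among the terms with $i \neq 0$ that are hard to monitor after reduction; and Lemma \ref{lem-foncteur} delivers only an equality of alternating sums of cohomology, not of individual cohomology groups. Overcoming these obstacles would seem to require either a direct integral computation of $H^0(j_!^{\geq rg} \FC_{\xi,\bar \Zm_l}(\pi_v,r))$ using the filtrations of \cite{boyer-ens}, or a rigidity-type argument on automorphic multiplicities in the spirit of the strong congruences of \S \ref{para-rep-congru-forte}.
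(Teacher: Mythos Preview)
You are right that this statement is a \emph{conjecture} in the paper and is not proved in general; your diagnosis of the obstruction --- separating the contributions of the various $s$ once the alternating-sum identity is in hand --- is exactly the one the paper gives.

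Two points of comparison with what the paper actually does.

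First, your entry point via Proposition~\ref{rl-hi} with $u=u'$ is legitimate but roundabout. The paper's route (Lemma~\ref{lem-init} in \S\ref{para-ramifie}) is more direct: from $r_l(\pi_v)=r_l(\pi'_v)$ one has $\Fm\,\FC_\xi(\pi_v,r)\simeq\Fm\,\FC_\xi(\pi'_v,r)$, hence $\Fm\,j_!^{\geq rg}\FC_\xi(\pi_v,r)\simeq\Fm\,j_!^{\geq rg}\FC_\xi(\pi'_v,r)$, and Lemma~\ref{lem-foncteur} equates the alternating mod-$l$ sums. Combined with a descending induction on $r$ (so that all $H^i$ with $i\neq 0$ are already controlled), this yields $\sum_s n^{(r,s)}_{(r,0),\pi_v}=\sum_s n^{(r,s)}_{(r,0),\pi'_v}$, i.e.\ the sum over $s$ of the conjectural equalities~(\ref{eq-rs}).

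Second, for the separation of the $s$, your Whittaker-level idea at the place $v$ is not the mechanism the paper uses. The paper treats two situations: under the \og tr\`es ramifi\'e\fg{} hypothesis~(\ref{eq-propriete0}) of \S\ref{para-ramifie} only $s=1$ contributes and there is nothing to separate; in the general case (\S 5.4) the paper does not separate directly but instead shows that Conjecture~\ref{conj} is \emph{equivalent} to Conjecture~\ref{conj-2} on the torsion of the $H^i_c(\FC_\xi(\pi_v,r))$, the separation being effected by reading off the free parts of $H^{s_0-1}_c\bigl(\FC_\xi(\pi_v,r-s_0+1)[d-(r-s_0+1)g]\bigr)$ for decreasing $s_0$. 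So the paper's separating device is cohomological degree at varying $(r',i)$, not Whittaker level.

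Your Whittaker suggestion has a further difficulty you partly flag: the local factor $S_{\pi_v}(s,t)(r,0)(\Pi_v)$ lives on $GL_{d-rg}(F_v)$, not $GL_d(F_v)$ --- the Speh block has been partially eaten by the Jacquet functor --- so its Whittaker signature on the remaining factor does not record $s$ as cleanly as you suggest. When the paper does need to detect $s$ via representation-theoretic invariants (e.g.\ in the proof of Proposition~\ref{prop-congru2} or in Lemma~\ref{lem-congru-forte}), it works at auxiliary unramified places $v_0$ or $w$ where the full $GL_d$ local component is still intact.
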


\rem pour $\overline{\lambda_T} \otimes \varrho^{T,\oo,v} \otimes \varrho_v \otimes \chi$ 
un irrÈductible de $G_{\bar \Fm_l,T}(v,h)$, on 
choisit pour tout $z \not \in T \cup \{ \oo \}$ un sous-groupe compact ouvert $K_z$ de 
$G(\Qm_z)$ tel que $\rho^{T,\oo}_z$ a des vecteurs invariants sous $K_z$.
Toute reprÈsentation $\Pi$ de $G(\Am^{\oo,v}) \times GL_h(F_v)$ dont la rÈduction
modulo $l$ de niveau $T$ contient 
$\overline{\lambda_T} \otimes \varrho^{T,\oo} \otimes \varrho_v \otimes \chi$,
admet, par dÈfinition, des vecteurs invariants sous $K^\oo:= G(\Qm_T) \times
\prod_{z \not \in T \cup \{ \oo \} } K_z$. En particulier il n'y a qu'un nombre fini
de reprÈsentations $\Pi \in \AC_{\xi,\pi_v}(r,s)$ telles que $r_l(t_{r,0}^{r,s}(\Pi))$ contiennent 
$\overline{\lambda_T} \otimes \varrho^{T,\oo} \otimes \varrho_v \otimes \chi$.

\begin{defi} \label{defi-n}
La multiplicitÈ de $\overline{\lambda_T} \otimes \varrho^{T,\oo} \otimes \varrho_v \otimes 
(\chi \otimes \Xi^{\frac{2i+r'-r}{2}})$ dans 
$\sum_{\Pi \in \AC_{\xi,\pi_v}(r,s)} m(\Pi) d_\xi(\Pi_\oo) r_l (t^{r,s}_{r',i}(\Pi))$
est finie, on la notera
$n^{(r,s)}_{(r',i),\pi_v}(\overline{\lambda_T} \otimes \varrho^{T,\oo} \otimes \varrho_v 
\otimes \chi)$.
\end{defi}

\rem le lecteur notera que pour $\varrho_v \otimes \chi$ et $\varrho'_v \otimes \chi'$ 
deux constituants irrÈductibles
de $r_l(S_{\pi_v}(s,r-s+1)(r',i))$ de multiplicitÈ respectives $n$ et $n'$, on a l'ÈgalitÈ
$$n'. n^{(r,s)}_{(r',i),\pi_v}(\overline{\lambda_T} \otimes \varrho^{T,\oo} \otimes \varrho_v
\otimes \chi)= n .n^{(r,s)}_{(r',i),\pi_v}(\overline{\lambda_T} \otimes \varrho^{T,\oo} 
\otimes \varrho'_v \otimes \chi').$$
La conjecture \ref{conj} s'Ècrit alors comme une famille d'ÈgalitÈs
\addtocounter{smfthm}{1}
\begin{equation} \label{eq-rs}
n^{(r,s)}_{(r,0),\pi_v}(\overline{\lambda_T} \otimes \varrho^{T,\oo} \otimes \varrho_v 
\otimes \chi) =
n^{(r,s)}_{(r,0),\pi'_v}(\overline{\lambda_T} \otimes \varrho^{T,\oo} \otimes \varrho_v
\otimes \chi),
\end{equation}
indexÈe par les $\overline{\lambda_T} \otimes \varrho^{T,\oo} \otimes \varrho_v \otimes \chi$. 
Ces ÈgalitÈs sont ‡ interprÈter en termes de congruences fortes entre reprÈsentations
automorphes. PrÈcisÈment pour 
$\overline{\lambda_T} \otimes \varrho^{T,\oo} \otimes \varrho_v \otimes \chi$ un irrÈductible 
de $\GF_{\bar \Fm_l,T}(v,d-rg)$ tel que 
$n^{(r,s)}_{(r,0),\pi_v}(\overline{\lambda_T} \otimes \varrho^{T,\oo} \otimes \varrho_v 
\otimes \chi) \neq 0$, 
il existe $\Pi \in \AC_{\xi,\pi_v}(r,s)$, et donc $\Pi' \in \AC_{\xi,\pi'_v}(r,s)$, 
dont les rÈductions modulo $l$ de niveau $T$ de $t^{r,s}_{r,0}(\Pi)$ et $t^{r,s}_{r,0}(\Pi')$
contiennent $\overline{\lambda_T} \otimes \varrho^{T,\oo}  \otimes \varrho_v \otimes \chi$. 
Alors d'aprËs
la proposition \ref{prop-forte-congru}, $\Pi$ et $\Pi'$ sont \emph{fortement congruentes}.
En outre pour toute place $w$, les rÈductions modulo $l$ de $\Pi_w$ et $\Pi'_w$ ne 
sont pas disjointes: c'est clair pour toutes les places $p' \neq p$ de $\Zm$
puisqu'elles contiennent $\rho^{T,\oo}_{p'}$. Pour la place $p$, c'est ‡ nouveau clair
pour les places $v_i \neq v$ et reste alors ‡ traiter le cas de la place $v$.
\'Ecrivons
$\Pi_v \simeq \speh_s(\st_t(\pi_{1,v})) \times \cdots \times \speh_s(\st_t(\pi_{k,v})) \times ?$
o˘ les $\pi_{i,v}\simeq \pi_v \otimes \xi_i \circ \val \circ \det$ pour $i=1,\cdots,k$ sont 
inertiellement Èquivalents ‡ $\pi_v$ et 
\begin{multline*}
t_{r,0}^{r,s}(\Pi)=\Pi^{\oo,v} \otimes \sum_{i=1}^k \Bigl ( \speh_s(\st_t(\pi_{1,v})) \times \cdots
\times  \speh_s(\st_t(\pi_{i-1,v})) \\
\times S_{\pi_{i,v}}(s,r-s+1)(r,0) \times \\
\speh_s(\st_t(\pi_{i+1,v})) \times \cdots \times  \speh_s(\st_t(\pi_{k,v})) \times ? \Bigr ) 
\otimes \xi_i.
\end{multline*}
Notons $i$ un indice pour lequel $\varrho_v \otimes \chi$ est un constituant de la 
rÈduction modulo $l$ de 
$A_v:=\Bigl ( \speh_s(\st_t(\pi_{1,v})) \times \cdots \times S_{\pi_{i,v}}(s,r-s+1)(r,0) 
\times \cdots \times  \speh_s(\st_t(\pi_{k,v})) \times ? \Bigr ) \otimes \xi_i$. Du cÙtÈ de $\Pi'$,
$\varrho_v \otimes \chi$ est un constituant de la rÈduction modulo $l$ de
$A'_v:=\Bigl ( \speh_s(\st_t(\pi'_{1,v})) \times \cdots \times S_{\pi'_{i',v}}(s,r-s+1)(r,0) 
\times \cdots \times  \speh_s(\st_t(\pi'_{k',v})) \times ? \Bigr ) \otimes \xi'_{i'}$. Ainsi
la rÈduction modulo $l$ de $\xi_i$ et $\xi'_{i'}$ sont Ègales et donc, comme $r_l(\pi_v)=
r_l(\pi'_v)$, les rÈductions modulo $l$ de $\pi_{i,v}$ et $\pi'_{i',v}$ sont Ègales. On en dÈduit
que les rÈductions modulo $l$ de $S_{\pi_{i,v}}(s,r-s+1)(r,0)$ et $S_{\pi'_{i',v}}(s,r-s+1)(r,0)$
sont Ègales et comme celles de $\speh_s(\st_t(\pi_{i,v}))$ et $\speh_s(\st_t(\pi'_{i',v}))$
sont aussi Ègales, du fait que $r_l(A_v)$ et $r_l(A'_v)$ ont un constituant en commun, on
en dÈduit qu'il en est de mÍme pour les rÈductions modulo $l$ de $\Pi_v$ et $\Pi'_v$.

\subsection{Cas trËs ramifiÈs}
\label{para-ramifie}

Nous nous proposons d'Ètudier dans ce paragraphe le cas trËs simple o˘
$\overline{\lambda_T} \otimes \varrho^{T,\oo}$ vÈrifie la propriÈtÈ suivante:
\addtocounter{smfthm}{1}
\begin{equation} \label{eq-propriete0}
\forall \pi_v, ~ \forall \varrho_v \otimes \chi, \quad n^{(r,s)}_{(r',i),\pi_v}(\overline{\lambda_T} 
\otimes \varrho^{T,\oo} \otimes \varrho_v \otimes \chi) \neq 0 \Leftrightarrow s=1 \hbox{ et } 
i=0.
\end{equation}
Citons comme cas particuliers:
\begin{itemize}
\item celui o˘ la reprÈsentation $\xi$ est trËs ramifiÈe de sorte que, la cohomologie
de la variÈtÈ de Shimura Ètant concentrÈe en degrÈ mÈdian, d'aprËs \cite{boyer-compositio}
\S 4, pour tout $s >1$ les $\AC_{\xi,\pi_v}(r,s)$ sont vides.

\item celui o˘ il existe une place $x \not \in T \cup \{ \oo \}$ 
dÈcomposÈe $x=yy^c$ dans $E$ et une place
$z$ de $F$ au dessus de $y$ telle que, au sens de la notation \ref{nota-GFv}, 
$G(F_z) \simeq GL_d(F_z)$ et $\rho^{T,\oo}_z$ est non dÈgÈnÈrÈe.
\end{itemize}

Pour un $\overline{\lambda_T} \otimes \varrho^{T,\oo}$ vÈrifiant la propriÈtÈ 
(\ref{eq-propriete0}), nous allons prouver les ÈgalitÈs (\ref{eq-rs}) par rÈcurrence sur $r$ de
$s_g:=\lfloor \frac{d}{g} \rfloor$ ‡ $1$. Pour $r=s_g$, les couples $(s,t)$
tel qu'il existe $i$ pour lequel $m_{s,t}(s_g,i) \neq 0$ sont $(s_g,1)$ et $(1,s_g)$
ce qui correspond aux reprÈsentations 
\begin{itemize}
\item $\Pi \in \AC_{\xi,\pi_v}(s_g,s_g)$ et donc $\Pi_v \simeq \speh_{s_g}(\pi_v) \times ?$,  

\item $\Pi \in \AC_{\xi,\pi_v}(s_g,1)$ et donc $\Pi_v \simeq \st_{s_g}(\pi_v) \times ?$,
\end{itemize}
avec $?$ une reprÈsentation de $GL_k(F_v)$ avec $k < g$. Rappelons que les extensions
intermÈdiaires et par zÈro de $\FC_\xi(s_g,\pi_v)[d-s_g g]$ sont Ègales et qu'alors
la $\bar \Qm_l$-cohomologie est concentrÈe en degrÈ mÈdian. D'aprËs le corollaire
\ref{coro-coho0}, $\frac{d}{e_{\pi_v} \sharp \ker^1(\Qm,G)}
[H^0( j_{!}^{\geq s_g g} \FC_\xi(s_g,\pi_v)_1[d-s_g g])]$ est Ègal ‡
\addtocounter{smfthm}{1}
\begin{multline} \label{eq-cas-ss}
\sum_{\Pi \in \AC_{\xi,\pi_v}
(s_g,s_g)} m(\Pi)d_\xi(\Pi_\oo)  \Pi^{\oo,v} \otimes S_{\pi_v}(s_g,s_g)(s_g,0)(\Pi_v) \\ 
+ \sum_{\Pi \in \AC_{\xi,\pi_v}(s_g,1)} m(\Pi)d_\xi(\Pi_\oo) \Pi^{\oo,v} \otimes
S_{\pi_v}(s_g,1)(s_g,0)(\Pi_v).
\end{multline}
Le cas de $n^{(s_g,1)}_{(s_g,0),\pi_v}(\overline{\lambda_T} \otimes \varrho^{T,\oo}\otimes 
\varrho_v \otimes \chi)$
dÈcoule alors de la proposition suivante en utilisant la nullitÈ de
$n^{(s_g,s_g)}_{(s_g,0),\pi_v}(\overline{\lambda_T} \otimes \varrho^{T,\oo}\otimes \varrho_v
\otimes \chi)$ et
$n^{(s_g,s_g)}_{(s_g,0),\pi'_v}(\overline{\lambda_T} \otimes \varrho^{T,\oo}\otimes \varrho_v
\otimes \chi)$.

\begin{prop} \label{prop-ss-2}
Au sens de la dÈfinition \ref{defi-gp-groth}, on a
\begin{multline*}
\sum_{\Pi \in \AC_{\xi,\pi_v}(s_g,s_g) \bigcup \AC_{\xi,\pi_v}(s_g,1)} m(\Pi)
d_\xi(\Pi_\oo) r_l ( \Pi^\oo  ) \\ = 
\sum_{\Pi' \in \AC_{\xi,\pi'_v}(s_g,s_g) \bigcup \AC_{\xi,\pi'_v}(s_g,1)} 
m(\Pi') d_\xi(\Pi'_\oo) r_l  (\Pi^{',\oo}).
\end{multline*}
\end{prop}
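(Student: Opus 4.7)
The plan is to deduce the stated equality by comparing the mod-$l$ reductions of the integral cohomology of the Harris-Taylor perverse sheaves $j_{!}^{\geq s_g g}\FC_{\xi,\bar\Zm_l}(s_g,\pi_v)[d-s_g g]$ and $j_{!}^{\geq s_g g}\FC_{\xi,\bar\Zm_l}(s_g,\pi'_v)[d-s_g g]$. The critical observation is that by hypothesis $r_l(\pi_v)=r_l(\pi'_v)=\varrho_{-1}$ is supercuspidal, so Proposition \ref{prop-red-D} yields an isomorphism $r_l(\pi_v[s_g]_D)\simeq r_l(\pi'_v[s_g]_D)$ of $\bar\Fm_l$-representations of $D_{v,s_g g}^\times$. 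This is exactly the specialization of (\ref{eq-chgt-cuspi}) to $u=u'=-1$ (with the factor $l^{u'-u}=1$), and after applying $j_{!}^{\geq s_g g}$ via (\ref{eq-F-chgt2}) it gives $\Fm[j_{!}^{\geq s_g g}\FC_{\xi,\bar\Zm_l}(s_g,\pi_v)]=\Fm[j_{!}^{\geq s_g g}\FC_{\xi,\bar\Zm_l}(s_g,\pi'_v)]$.

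Next I would combine this with Lemma \ref{lem-foncteur}, which identifies the alternating sum of $r_l(H^i(-))$ with the alternating sum of $H^i(\Fm(-))$. For $r=s_g$ and the only couples $(s,t)\in\{(s_g,1),(1,s_g)\}$ that contribute, the polygon description of Proposition \ref{prop-coho-n0} (illustrated in Figure \ref{fig-coho-n1}) forces $n_{s,t}(s_g,i)=0$ for every $i\neq 0$, so the $\bar\Qm_l$-cohomology is concentrated in degree $0$. The alternating sum therefore reduces to a single term, giving the equality of mod-$l$ reductions of $[H^0(j_{!}^{\geq s_g g}\FC_{\xi,\bar\Zm_l}(s_g,\pi_v)[d-s_g g])]$ and its counterpart for $\pi'_v$ as elements of $\GF_{\bar\Fm_l}(v,d-s_g g)$.

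The final step is to expand both sides using the explicit formula (\ref{eq-cas-ss}) of Corollary \ref{coro-coho0}. Both $H^0$'s become sums over $\AC_{\xi,\pi_v}(s_g,s_g)\cup\AC_{\xi,\pi_v}(s_g,1)$ (resp.\ for $\pi'_v$) weighted by $m(\Pi)\,d_\xi(\Pi_\oo)$ of terms of the form $\Pi^{\oo,v}\otimes S_{\pi_v}(s_g,\cdot)(s_g,0)(\Pi_v)$. By the remark following Proposition \ref{prop-coho-gen}, the twisted local factor $S_{\pi_v}(s_g,\cdot)(s_g,0)(\Pi_v)$ is integral and its mod-$l$ reduction depends only on $r_l(\pi_v)=r_l(\pi'_v)$; combined with the equalities $r_l(\speh_{s_g}(\pi_v))=r_l(\speh_{s_g}(\pi'_v))$ and $r_l(\st_{s_g}(\pi_v))=r_l(\st_{s_g}(\pi'_v))$ (which follow from Propositions \ref{prop-stl} and \ref{prop-red-Whittaker}), the twisted local factor can be absorbed into $r_l(\Pi_v)$ so that each term becomes $r_l(\Pi^\oo)=r_l(\Pi^{\oo,v}\otimes\Pi_v)$, yielding the statement.

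The main obstacle is the last bookkeeping step: verifying that the character twists by $\xi_k$ and by $\Xi^{(2i+r-s-t+1)/2}$ combine correctly so that, under mod-$l$ reduction, each $\Pi^{\oo,v}\otimes S_{\pi_v}(s_g,\cdot)(s_g,0)(\Pi_v)$ matches $r_l(\Pi^\oo)$, and that the implicit bijection between $\AC_{\xi,\pi_v}(\cdot)$ and $\AC_{\xi,\pi'_v}(\cdot)$ induced by swapping the cuspidal lift above $\varrho_{-1}$ is compatible with the multiplicity data $m(\Pi)d_\xi(\Pi_\oo)$. The essential input making this possible is that the Zelevinsky segment data underlying the definition of $R_{\pi_v[s_g]_D}$ only see the cuspidal class of $\varrho_{-1}$, not the particular lift.
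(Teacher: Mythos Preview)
Your proposal is correct and follows essentially the same route as the paper: both arguments use $\Fm\FC_\xi(s_g,\pi_v)\simeq\Fm\FC_\xi(s_g,\pi'_v)$ together with the commutation of $\Fm$ with $j_!$, the concentration of the $\bar\Qm_l$-cohomology in degree $0$, and then pass from the $S_{\pi_v}$-twisted expression in (\ref{eq-cas-ss}) to $r_l(\Pi^\oo)$ via the fact that $r_l(\speh_{s_g}(\pi_v))$ and $r_l(\st_{s_g}(\pi_v))$ depend only on $r_l(\pi_v)$. One small caveat: Propositions \ref{prop-stl} and \ref{prop-red-Whittaker} do not by themselves yield these last equalities (the paper simply asserts them), and there is no implicit bijection between $\AC_{\xi,\pi_v}(\cdot)$ and $\AC_{\xi,\pi'_v}(\cdot)$---the equality is purely at the level of the Grothendieck groups of Definition \ref{defi-gp-groth}.
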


\begin{proof} Notons que l'on a
$\Fm \FC_\xi (s_g,\pi_v) = \Fm \FC_\xi(s_g,\pi'_v)$
et que le foncteur $\Fm$ commute avec $j_!^{\geq s_g g}$. On en dÈduit alors que,
pour tout sous-groupe compact ouvert $I=G(\Zm_T) \times I^{T,\oo}$ de $G(\Am^\oo)$, on a
$$\Fm H^{*} ( \overline X_I, j_{!}^{\geq s_g g} \FC_\xi(s_g,\pi_v)_1[d-s_gg]) =
H^{*} \Bigl ( \overline X_I, \Fm j_{!}^{\geq s_g g} \FC_\xi(s_g,\pi_v)_1[d-s_gg] \Bigr ).$$
D'aprËs (\ref{eq-cas-ss}), on a la formule pour $\Pi^{\oo,v} \otimes 
S_{\pi_v}(s_g,s_g)(s_g,0)(\Pi_v)$ (resp.  $\Pi^{\oo,v} \otimes S_{\pi_v}(s_g,1)(s_g,0)(\Pi_v)$)
‡ la place de $\Pi^\oo$ pour $\Pi \in \AC_{\xi,\pi_v}(s_g,s_g)$ (resp. 
pour $\Pi \in \AC_{\xi,\pi_v}(s_g,1)$). Pour passer de $S_{\pi_v}(s_g,s_g)(s_g,0)(\Pi_v)$
(resp. de $S_{\pi_v}(s_g,1)(s_g,0)(\Pi_v)$) ‡ $\Pi_v$, on raisonne comme prÈcÈdemment
en utilisant que la rÈduction modulo $l$ de
$\speh_{s_g}(\pi_v)$ (resp. $\st_{s_g}(\pi_v)$) ne dÈpend que de celle de $\pi_v$ et de $s_g$.
\end{proof}

\begin{lemm}
Soit $\varrho_v \otimes \chi$ un constituant irrÈductible de 
$S_{\pi_v}(s,r-s+1)(r,0) \bigl ( \Pi_v \bigr )$,si
$$ n^{(r,s)}_{(r,0),\pi_v}(\overline{\lambda_T} \otimes \varrho^{T,\oo} \otimes \varrho_v
\otimes \chi) =
n^{(r,s)}_{(r,0),\pi'_v}(\overline{\lambda_T} \otimes \varrho^{T,\oo} \otimes \varrho_v
\otimes \chi)$$
alors pour tout $(r',i)$ et pour toute reprÈsentation $\varrho'_v \otimes \chi'$ de 
$GL_{d-r'g}(F_v) \times \Zm$, on a
$$ n^{(r,s)}_{(r',i),\pi_v}(\overline{\lambda_T} \otimes \varrho^{T,\oo} \otimes \varrho'_v
\otimes \chi')=
n^{(r,s)}_{(r',i),\pi_v}(\overline{\lambda_T} \otimes \varrho^{T,\oo} \otimes \varrho'_v
\otimes \chi').$$
\end{lemm}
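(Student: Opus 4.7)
The plan combines two ingredients: the proportionality relation stated in the remark preceding the conjecture \ref{conj}, and the structural observation --- the analog for $S_{\pi_v}$ of the one made for $R_{\pi_v}$ in \S 3.2 --- that $r_l(S_{\pi_v}(s,t)(r',i))$ depends on $\pi_v$ only through $r_l(\pi_v) = r_l(\pi'_v)$. The latter follows by the same combinatorial argument: the Jacquet-functor expression and the sign computations of $R_{\pi_v[r]_D}(\langle a_1 \rangle)$ depend on $\pi_v$ only through the shape of the multisegments on the Zelevinsky line, which modulo $l$ is intrinsic to $r_l(\pi_v)$.

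First I would fix $(r',i)$ and apply the proportionality remark to write
\[
n^{(r,s)}_{(r',i),\pi_v}(\overline{\lambda_T} \otimes \varrho^{T,\oo} \otimes \varrho''_v \otimes \chi'') = \lambda^{(r',i)}_{\pi_v} \cdot [\varrho''_v \otimes \chi'' : r_l(S_{\pi_v}(s,r-s+1)(r',i))]
\]
for some scalar $\lambda^{(r',i)}_{\pi_v}$ independent of $\varrho''_v \otimes \chi''$, and similarly for $\pi'_v$. Combined with $r_l(S_{\pi_v}(s,r-s+1)(r',i)) = r_l(S_{\pi'_v}(s,r-s+1)(r',i))$, the assertion at a fixed $(r',i)$ reduces to the scalar identity $\lambda^{(r',i)}_{\pi_v} = \lambda^{(r',i)}_{\pi'_v}$.

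Next, to propagate from $(r,0)$ to an arbitrary $(r',i)$, I would expand $\lambda^{(r',i)}_{\pi_v}$ as a sum over $\Pi \in \AC_{\xi,\pi_v}(r,s)$ of contributions of the form $m(\Pi) d_\xi(\Pi_\oo) [\overline{\lambda_T} \otimes \varrho^{T,\oo} : r_l(\Pi^{\oo,v})] \cdot c_\Pi(r',i)$, where $c_\Pi(r',i)$ records how $r_l(S_{\pi_v}(s,r-s+1)(r',i)(\Pi_v))$ contributes to a chosen reference constituent. The $r_l$-independence above forces each ratio $c_\Pi(r',i)/c_\Pi(r,0)$ to depend only on $r_l(\Pi_v)$ and on the pair $((r,0),(r',i))$, hence to coincide for $\pi_v$ and $\pi'_v$. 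Consequently $\lambda^{(r',i)}_{\pi_v}/\lambda^{(r,0)}_{\pi_v} = \lambda^{(r',i)}_{\pi'_v}/\lambda^{(r,0)}_{\pi'_v}$, so the hypothesis at $(r,0)$ propagates to every $(r',i)$.

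The main obstacle lies in this last step: verifying rigorously that $c_\Pi(r',i)/c_\Pi(r,0)$ factors through $r_l(\Pi_v)$. This amounts to unraveling the combinatorics of proposition \ref{prop-coho-gen2} and of the Jacquet-functor calculation preceding it, checking that the coefficients $\epsilon_\psi$ and the orientation data on the Zelevinsky line --- the only inputs depending on the cuspidal --- are genuinely invariants of the reduction modulo $l$, a finite combinatorial verification rather than a deep analytic obstruction.
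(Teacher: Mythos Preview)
Your approach uses the right two ingredients --- the proportionality remark after d\'efinition~\ref{defi-n} and the fact that $r_l\bigl(S_{\pi_v}(s,t)(r',i)\bigr)=r_l\bigl(S_{\pi'_v}(s,t)(r',i)\bigr)$ --- and these are exactly what the paper invokes. The paper's own proof is two sentences: if the relevant coefficient vanishes the statement is trivial, and otherwise it ``follows from'' the equality of reductions of the $S$-representations. So at the level of ingredients you are aligned with the paper.

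Where your write-up departs from the paper, and where there is a genuine gap, is the propagation step. You argue that each individual ratio $c_\Pi(r',i)/c_\Pi(r,0)$ ``depends only on $r_l(\Pi_v)$'' and then conclude
\[
\lambda^{(r',i)}_{\pi_v}/\lambda^{(r,0)}_{\pi_v}=\lambda^{(r',i)}_{\pi'_v}/\lambda^{(r,0)}_{\pi'_v}.
\]
This inference is not valid as stated: the $\lambda$'s are \emph{sums} over the distinct index sets $\AC_{\xi,\pi_v}(r,s)$ and $\AC_{\xi,\pi'_v}(r,s)$, and equality of term-by-term ratios (even if those ratios factor through $r_l(\Pi_v)$) does not give equality of ratios of sums unless the ratio is actually \emph{constant}, independent of $\Pi$. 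Different $\Pi$ in $\AC_{\xi,\pi_v}(r,s)$ have different $\Pi_v$'s (the factor ``$?$'' varies), so your stated hypothesis on the $c_\Pi$'s is strictly weaker than what you use.

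The point the paper is taking for granted --- and which you should make explicit instead of the $c_\Pi$ analysis --- is that in $t^{r,s}_{r',i}(\Pi)=\Pi^{\oo,v}\otimes S_{\pi_v}(s,t)(r',i)(\Pi_v)$ the \emph{only} thing that moves with $(r',i)$ is the elementary block $S_{\pi_{k,v}}(s,t)(r',i)$ inside each summand $S_{\pi_v}(s,t)(r',i)(\Pi_v,k)$; the factor $\Pi^{\oo,v}$ and all the remaining local factors are literally unchanged. Since $r_l\bigl(S_{\pi_{k,v}}(s,t)(r',i)\bigr)$ depends only on $r_l(\pi_{k,v})$ (hence only on $r_l(\pi_v)$), the passage $(r,0)\rightsquigarrow(r',i)$ is implemented on both sides by the \emph{same} operation at the level of reductions. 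That is the sense in which the paper says the result ``d\'ecoule'' from the key fact, without any need to compare term-by-term ratios across two different automorphic sums.
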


\begin{proof} 
Si $m_{r,s}(r',i)=0$ c'est bien entendu Èvident. Sinon le rÈsultat dÈcoule du fait que
la rÈduction modulo $l$ de $S_{\pi_v}(s,t)(r',i)$ est Ègale ‡ celle de $S_{\pi'_v}(s,t)(r',i)$.
\end{proof}

On suppose ‡ prÈsent, par rÈcurrence, que pour tout $r'>r$ et pour tout $s$, on a
$$n^{(r',s)}_{(r',0),\pi_v}(\overline{\lambda_T} \otimes \varrho^{T,\oo} \otimes \varrho_v
\otimes \chi)=
n^{(r',s)}_{(r',0),\pi'_v}(\overline{\lambda_T} \otimes \varrho^{T,\oo} \otimes \varrho_v
\otimes \chi).$$
Du lemme suivant et de la nullitÈ, d'aprËs la propriÈtÈ (\ref{eq-propriete0}), des 
$ n^{(r,s)}_{(r,0),\pi_v}(\overline{\lambda_T} \otimes \varrho^{T,\oo} \otimes \varrho_v
\otimes \chi)$ pour tout $s \geq 2$, on obtient l'ÈgalitÈ ci-dessus pour $r$ ce qui termine
la preuve de l'induction de la rÈcurrence.

\begin{lemm} \label{lem-init}
L'ÈgalitÈ suivante est vÈrifiÈe
\addtocounter{smfthm}{1}
\begin{equation} \label{eq-a-distinguer}
\sum_s  n^{(r,s)}_{(r,0),\pi_v}(\overline{\lambda_T} \otimes \varrho^{T,\oo} \otimes \varrho_v
\otimes \chi)=
\sum_s  n^{(r,s)}_{(r,0),\pi'_v}(\overline{\lambda_T} \otimes \varrho^{T,\oo} \otimes \varrho_v
\otimes \chi).
\end{equation}
\end{lemm}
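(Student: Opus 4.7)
Le plan est d'adapter la preuve de la proposition \ref{rl-hi} \`a notre situation. Puisque $r_l(\pi_v) = r_l(\pi'_v)$, la proposition \ref{prop-red-D} appliqu\'ee aux repr\'esentations $\pi_v[r]_D$ et $\pi'_v[r]_D$ de $D_{v, rg}^\times$ donne $r_l(\pi_v[r]_D) = r_l(\pi'_v[r]_D)$, d'o\`u l'identit\'e des r\'eductions
\[
\Fm \FC_{\xi, \bar \Zm_l}(r, \pi_v) = \Fm \FC_{\xi, \bar \Zm_l}(r, \pi'_v).
\]
Comme $\Fm$ commute \`a $j_!^{\geq rg}$, ceci entra\^ine l'\'egalit\'e pour tout $i$ des groupes $H^i(j_!^{\geq rg} \Fm \FC_{\xi, \bar \Zm_l}(r, \pi_v)[d-rg])$ et de leurs analogues pour $\pi'_v$, puis, par le lemme \ref{lem-foncteur}, l'\'egalit\'e dans le groupe de Grothendieck des sommes altern\'ees $\sum_i (-1)^i r_l[H^i(j_!^{\geq rg} \FC_{\xi, \bar \Zm_l}(r, \pi_v)[d-rg])]$ pour $\pi_v$ et $\pi'_v$.

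Je projetterai ensuite cette \'egalit\'e sur le coefficient de $\overline{\lambda_T} \otimes \varrho^{T,\oo} \otimes \varrho_v \otimes \chi$. Par le corollaire \ref{coro-coho0}, la contribution au degr\'e $i$ s'\'ecrit, \`a un facteur multiplicatif pr\`es, comme une somme sur les couples $(\rho, s)$ tels que $n_{s, \rho-s+1}(r, i) = 1$ des multiplicit\'es $n^{(\rho, s)}_{(r, i), \pi_v}$ convenablement tordues par $\Xi^{-(2i+r-\rho)/2}$. L'hypoth\`ese de r\'ecurrence, combin\'ee avec le lemme pr\'ec\'edent, donne l'\'egalit\'e $n^{(\rho, s)}_{(r, i), \pi_v}(\cdots) = n^{(\rho, s)}_{(r, i), \pi'_v}(\cdots)$ pour tout $\rho > r$, tout $(r, i)$ et tout caract\`ere, de sorte que les contributions avec $\rho > r$ s'annulent terme \`a terme dans les deux membres.

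Il reste alors les contributions des couples $(\rho, s)$ avec $\rho = r$, c'est-\`a-dire $t = r - s + 1$; le polygone correspondant a pour sommets $(r, 0), (s, 0), (1, s-1), (r-s+1, s-1)$ et son c\^ot\'e reliant $(r, 0)$ \`a $(r-s+1, s-1)$ est de pente $-1$, si bien que le seul point de ce polygone d'abscisse $r$ est $(r, 0)$. Le twist $\Xi^{(2i+r-\rho)/2}$ \'etant trivial en $(\rho, i) = (r, 0)$, l'\'egalit\'e r\'esiduelle se r\'eduit pr\'ecis\'ement \`a (\ref{eq-a-distinguer}). Le point d\'elicat sera la v\'erification combinatoire que seul $i = 0$ contribue pour $\rho = r$ et la gestion rigoureuse des twists par $\Xi$ dans l'identification des contributions; les autres \'etapes se ram\`enent \`a l'\'egalit\'e modulaire initiale et \`a l'application directe de l'hypoth\`ese de r\'ecurrence.
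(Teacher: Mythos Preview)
Your argument is correct and follows essentially the same route as the paper's own proof: both start from the equality $\Fm \FC_{\xi,\bar\Zm_l}(r,\pi_v)=\Fm \FC_{\xi,\bar\Zm_l}(r,\pi'_v)$ and the resulting equality of alternating sums via Lemma~\ref{lem-foncteur}, then use the induction hypothesis together with the preceding lemma to cancel all contributions from $\AC_{\xi,\pi_v}(\rho,s)$ with $\rho>r$, leaving only the $\rho=r$ terms, for which the combinatorics of the $n_{s,t}(r,i)$-polygon forces $i=0$. The paper phrases the intermediate step as ``for $i\neq 0$ the free parts of $H^i(j_!)$ have equal reduction modulo $l$'' and then deduces equality at $i=0$, whereas you work directly with the full alternating sum and subtract off the $\rho>r$ contributions in all degrees at once; these are two equivalent ways of organizing the same cancellation, and your combinatorial check that $(r,0)$ is the only point of the polygon with abscissa $r$ makes explicit what the paper leaves implicit.
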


\begin{proof} 
D'aprËs la proposition \ref{prop-coho-gen}, le corollaire prÈcÈdent et le corollaire 
\ref{coro-di}, pour tout $i \neq 0$, les multiplicitÈs de 
$\overline{\lambda_T} \otimes \varrho^{T,\oo} \otimes \varrho_v \otimes \chi$ dans
la rÈduction
modulo $l$ des parties libres de $H^i(\lexp p j_{!}^{\geq rg} \FC_{\xi}(\pi_v,r)[d-rg])$ et 
$H^i(\lexp p j_{!}^{\geq rg} \FC_{\xi}(\pi'_v,r)[d-rg])$ sont Ègales. Par ailleurs d'aprËs le lemme 
\ref{lem-foncteur}, la somme alternÈe de la rÈduction modulo $l$ des groupes de
cohomologie ne dÈpend que des parties libres, de sorte que l'ÈgalitÈ
$$\sum_i (-1)^i  r_l \Bigl (H^{i} ( \lexp p j^{\geq rg}_{!} \FC_{\xi}(\pi_v,r)_1[d-rg]) \Bigr )= 
\sum_i (-1)^i r_l \Bigl ( H^{i} ( \lexp p j^{\geq rg}_{!} \FC_{\xi}(\pi'_v,r)_1[d-rg]) \Bigr )$$
et ce qui prÈcËde, nous donne que les multiplicitÈs de
$\overline{\lambda_T} \otimes \varrho^{T,\oo} \otimes \varrho_v \otimes \chi$
dans la rÈduction modulo $l$
des parties libres de $H^{0} ( \lexp p j^{\geq rg}_{!} \FC_{\xi}(\pi_v,r)_1[d-rg])$ et de $H^{0} 
( \lexp p j^{\geq tg}_{!} \FC_{\xi}(\pi'_v,r)_1[d-rg])$ sont Ègales, ce qui d'aprËs 
\ref{prop-coho-gen} donne le rÈsultat annoncÈ.
\end{proof}

\subsection{Une conjecture sur la torsion}

Comme on peut le voir dans le paragraphe prÈcÈdent, la propriÈtÈ (\ref{eq-propriete0}),
nous sert ‡ sÈparer les contributions des diffÈrents $s$ dans (\ref{eq-a-distinguer}).
Donnons une motivation pour croire ‡ \ref{conj} en Ètudiant le cas $r=s_g$ de la 
proposition \ref{prop-ss-2}. Supposons alors par l'absurde que
\addtocounter{smfthm}{1}
\begin{equation} \label{eq-absurde}
n^{(s_g,1)}_{(s_g,0),\pi_v}(\overline{\lambda_T} \otimes \varrho^{T,\oo}\otimes \varrho_v
\otimes \chi) >
n^{(s_g,1)}_{(s_g,0),\pi'_v}(\overline{\lambda_T} \otimes \varrho^{T,\oo}\otimes \varrho_v
\otimes \chi).
\end{equation}

\begin{lemm} \label{lem-congru-forte}
Soit $T_1$ l'ensemble infini des places $w$ de $F$ au dessus d'une place $x$ 
de $T$ dÈcomposÈe $x=yy^c$ dans $E$, telles qu'au sens de la notation \ref{nota-GFv},
$G(F_w) \simeq GL_d(F_w)$. Alors pour tout sous-ensemble fini 
$S_1 \subset T_1$ de cardinal pair (resp. impair), il existe une reprÈsentation 
$\Pi_{S_1} \in  \AC_{\xi,\pi_v}(s_g,1)$ (resp. $\Pi'_{S_1} \in  \AC_{\xi,\pi'_v}(s_g,1)$) ramifiÈe 
en toutes les places de $T_1$, de sorte que toutes les reprÈsentations $\Pi_{S_1}$ et
$\Pi'_{S_1}$ ainsi construites sont fortement congruentes entre elles.
\end{lemm}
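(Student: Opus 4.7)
Le plan consiste à construire les représentations $\Pi_{S_1}$ et $\Pi'_{S_1}$ en faisant varier la forme intérieure de $G$ aux places de $S_1$, à l'aide de la correspondance de Jacquet-Langlands globale de \cite{badu}, en partant d'une représentation fournie par l'hypothèse absurde (\ref{eq-absurde}).

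Première étape: j'extrairais de (\ref{eq-absurde}) une représentation $\Pi_0 \in \AC_{\xi,\pi_v}(s_g,1)$ dont la réduction modulo $l$ de niveau $T$ contient $\overline{\lambda_T} \otimes \varrho^{T,\oo,v} \otimes \varrho_v \otimes \chi$; sa composante en $v$ est de la forme $\st_{s_g}(\pi_v) \times ?$ et, étant automorphe, elle n'est ramifiée qu'en un nombre fini de places, en particulier non ramifiée en presque toute place de $T_1$.

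Deuxième étape: pour $S_1 \subset T_1$ fini de cardinal pair, je construirais une nouvelle forme intérieure $G_{S_1}$ de $G$ qui coïncide avec $G$ hors de $S_1$ et qui devient $D_{w,d}^\times$ en chaque $w \in S_1$; la parité de $|S_1|$ est indispensable pour que la somme globale des invariants de Hasse s'annule. Par double application de la correspondance de Jacquet-Langlands (d'abord vers la série discrète associée $\Pi^{GL}$ de $GL_d(\Am_F)$ via le changement de base, puis vers le groupe $G_{S_1}$ en imposant la ramification voulue aux places de $S_1$, puis retour vers $G$ via JL après avoir forcé localement la ramification), j'obtiendrais $\Pi_{S_1} \in \AC_{\xi,\pi_v}(s_g,1)$, ramifiée en chaque $w \in S_1$, de composante en $v$ inchangée et partageant les mêmes paramètres de Satake modulo $l$ en toute place hors de $S_1$. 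Pour $|S_1|$ impair, la parité empêche de rester du côté $\pi_v$; je compenserais en combinant cette construction avec un changement de cuspidale $\pi_v \leadsto \pi'_v$ fourni par la proposition \ref{prop-ss-2}, lequel est gratuit modulo $l$ puisque $r_l(\pi_v) = r_l(\pi'_v)$, produisant ainsi $\Pi'_{S_1} \in \AC_{\xi,\pi'_v}(s_g,1)$.

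Troisième étape: la congruence forte entre toutes les représentations ainsi construites découlerait de la proposition \ref{prop-forte-congru}: par construction elles partagent le même système de caractères de Hecke modulo $l$ sur $\HC_T$, et en la place $w_0$ de l'hypothèse de \ref{prop-forte-congru}, leurs composantes locales demeurent de la forme $\speh_s(\pi_{w_0})$ avec $\pi_{w_0}$ non dégénérée, ce qui permet de passer de la congruence faible au niveau $T$ à la congruence forte au sens de la définition \ref{defi-forte-congru}.

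La principale difficulté technique résidera dans le contrôle simultané, au cours des transferts de Jacquet-Langlands successifs, de la propriété $\xi$-cohomologique, de la composante locale en $v$ de type $\st_{s_g}(\pi_v)$ ou $\st_{s_g}(\pi'_v)$ (garantissant l'appartenance aux ensembles $\AC_{\xi,\pi_v}(s_g,1)$ ou $\AC_{\xi,\pi'_v}(s_g,1)$), et du système de Hecke modulo $l$ de niveau $T$. Il s'agit en particulier de vérifier que l'augmentation de la ramification aux places de $S_1$, combinée au jeu sur les formes intérieures, n'altère pas ces paramètres modulo $l$; ceci relève d'une étude fine des transferts locaux de Jacquet-Langlands entre groupes non ramifiés et groupes des unités d'algèbres à division, et de la compatibilité de la correspondance de Badulescu avec la réduction modulo $l$ des caractères de Satake.
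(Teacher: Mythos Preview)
Your approach has a genuine gap that makes it unworkable as stated. You propose to transfer $\Pi_0$ via global Jacquet--Langlands to an inner form $G_{S_1}$ which is $D_{w,d}^\times$ at each $w\in S_1$. But the transfer from $GL_d$ (or from $G$) to such an inner form requires the local component $\Pi_{0,w}$ to lie in the image of the local Jacquet--Langlands map, i.e.\ to be essentially square-integrable. By construction $w\in T_1\subset T$, so $\Pi_{0,w}$ is an unramified principal series of $GL_d(F_w)$, which for $d>1$ is never discrete series: the global transfer to $G_{S_1}$ simply does not exist. Even were you to circumvent this, transferring back to $G$ via Jacquet--Langlands would recover the original unramified component at $w$, not a ramified one; the correspondence does not ``force ramification'' in the way you suggest.

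The paper's proof is of an entirely different nature: it is a pure multiplicity-counting argument, with no change of inner form. One descends from level $T$ to level $T-S_1$ in the equality of proposition~\ref{prop-ss-2}, and isolates at each $w\in S_1$ the unique non-degenerate constituent $\pi_w$ of the reduction modulo $l$ of an unramified generic $\Pi_w$ with Satake character $\overline{\lambda_w}$. Since no $\Pi\in\AC_{\xi,\pi_v}(s_g,s_g)$ can contribute $\pi_w$ (its local component being a Speh), the counting separates the ramified and unramified contributions of $\AC_{\xi,\pi_v}(s_g,1)$. An induction on $|S_1|$, combined with the binomial identity $\sum_{i=0}^r(-1)^i\binom{r}{i}=0$, produces a recursion with alternating signs; the strict inequality~(\ref{eq-absurde}) then forces $m'_{S_1}>0$ when $|S_1|$ is odd and $m_{S_1}>0$ when $|S_1|$ is even, yielding the claimed existence. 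The parity here has nothing to do with Hasse invariants: it is the sign $(-1)^{|S_1|}$ appearing in the recursion that decides on which side ($\pi_v$ or $\pi'_v$) the forced positivity falls.
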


\begin{proof}
Pour $w \in T_1$ et $\Pi_w$ une reprÈsentation de $GL_d(F_w)$ non ramifiÈe
et non dÈgÈnÈrÈe dont la rÈduction modulo $l$ du caractËre de Satake est 
$\overline{\lambda_w}$, $r_l(\Pi_w)$ qui ne dÈpend que de $\overline{\lambda_w}$,
contient une unique reprÈsentation non dÈgÈnÈrÈe que l'on note $\pi_w$. 
On regarde la multiplicitÈ de $\overline{\lambda_{T^w}} \otimes \varrho^{T,\oo}\otimes 
\pi_w \otimes \varrho_v \otimes \chi$ dans l'ÈgalitÈ de \ref{prop-ss-2} 
au niveau $T^w=T-\{ w \}$ en distinguant les contributions des $\Pi \in \AC_{\xi,\pi_v}(s_g,1)$
qui sont ou non ramifiÈes en $w$, en notant que pour tout $\Pi \in \AC_{\xi,\pi_v}(s_g,s_g)$,
$\pi_w$ n'est pas un constituant de $r_l(\Pi_w)$:
\begin{multline*}
n^{(s_g,1)}_{(s_g,0),\pi_v}(\overline{\lambda_T} \otimes \varrho^{T,\oo}\otimes \varrho_v
\otimes \chi) + m_w(\overline{\lambda_T} \otimes \varrho^{T,\oo}\otimes \varrho_v
\otimes \chi) \\ =
n^{(s_g,1)}_{(s_g,0),\pi'_v}(\overline{\lambda_T} \otimes \varrho^{T,\oo}\otimes \varrho_v
\otimes \chi) + m'_w(\overline{\lambda_T} \otimes \varrho^{T,\oo}\otimes \varrho_v
\otimes \chi)
\end{multline*}
o˘ $m_w(\overline{\lambda_T} \otimes \varrho^{T,\oo}\otimes \varrho_v \otimes \chi)$
correspond donc ‡ la multiplicitÈ de $\overline{\lambda_{T^w}} \otimes \varrho^{T,\oo}\otimes 
\pi_w \otimes \varrho_v \otimes \chi$ dans la rÈduction modulo $l$ de niveau $T^w$
des $\Pi \in \AC_{\xi,\pi_v}(s_g,1)$ qui sont ramifiÈes en $w$ et non ramifiÈes en $T^w$.
De l'inÈgalitÈ (\ref{eq-absurde}), on en dÈduit que 
$m'_w(\overline{\lambda_T} \otimes \varrho^{T,\oo}\otimes \varrho_v \otimes \chi) >0$
ce qui prouve le cas o˘ $S_1$ est de cardinal $1$.

Pour tout $S_1$, on note 
$m_{S_1}(\overline{\lambda_T} \otimes \varrho^{T,\oo}\otimes \varrho_v \otimes \chi)$
la multiplicitÈ de $\overline{\lambda_{T-S_1}} \otimes \varrho^{T,\oo}\otimes 
(\prod_{w \in S_1} \pi_w) \otimes \varrho_v \otimes \chi$ dans la rÈduction modulo $l$ 
de niveau $T-S_1$ des $\Pi \in \AC_{\xi,\pi_v}(s_g,1)$ qui sont ramifiÈes en $S_1$ et
non ramifiÈes en $T-S_1$. On raisonne alors par rÈcurrence sur le cardinal de $S_1$
en supposant que pour tout $S'_1$ de cardinal $\leq r$
\begin{multline*}
n^{(s_g,1)}_{(s_g,0),\pi_v}(\overline{\lambda_T} \otimes \varrho^{T,\oo}\otimes \varrho_v
\otimes \chi) + (-1)^{\sharp S'_1} m_{S'_1}(\overline{\lambda_T} \otimes \varrho^{T,\oo}
\otimes \varrho_v \otimes \chi) \\ =
n^{(s_g,1)}_{(s_g,0),\pi'_v}(\overline{\lambda_T} \otimes \varrho^{T,\oo}\otimes \varrho_v
\otimes \chi) + (-1)^{\sharp S'_1} m'_{S'_1}(\overline{\lambda_T} \otimes \varrho^{T,\oo}
\otimes \varrho_v \otimes \chi)
\end{multline*}
L'ÈgalitÈ de la proposition (\ref{prop-ss-2}) en niveau $T-S_1$
donne alors:
\begin{multline*}
n^{(s_g,1)}_{(s_g,0),\pi_v}(\overline{\lambda_T} \otimes \varrho^{T,\oo}\otimes \varrho_v
\otimes \chi) + \sum_{S \subsetneq S_1} m_{S}(\overline{\lambda_T} \otimes \varrho^{T,\oo}
\otimes \varrho_v \otimes \chi) +
+ m_{S_1}(\overline{\lambda_T} \otimes \varrho^{T,\oo}
\otimes \varrho_v \otimes \chi) = \\
n^{(s_g,1)}_{(s_g,0),\pi'_v}(\overline{\lambda_T} \otimes \varrho^{T,\oo}\otimes \varrho_v
\otimes \chi) + \sum_{S \subsetneq S_1} m'_{S}(\overline{\lambda_T} \otimes \varrho^{T,\oo}
\otimes \varrho_v \otimes \chi) +
+ m'_{S_1}(\overline{\lambda_T} \otimes \varrho^{T,\oo}
\otimes \varrho_v \otimes \chi) 
\end{multline*}
ce qui, par la bien connue formule du binÙme $(1-1)^r=\sum_{i=0}^r (-1)^i \binom{r}{i}=0$,
fournit 
\begin{multline*}
n^{(s_g,1)}_{(s_g,0),\pi_v}(\overline{\lambda_T} \otimes \varrho^{T,\oo}\otimes \varrho_v
\otimes \chi) -
n^{(s_g,1)}_{(s_g,0),\pi'_v}(\overline{\lambda_T} \otimes \varrho^{T,\oo}\otimes \varrho_v
\otimes \chi) = \\
(-1)^{\sharp S_1} \Bigl ( m'_{S_1}(\overline{\lambda_T} \otimes \varrho^{T,\oo}
\otimes \varrho_v \otimes \chi) -
m_{S_1}(\overline{\lambda_T} \otimes \varrho^{T,\oo} \otimes \varrho_v \otimes \chi) \Bigr )
\end{multline*}
\end{proof}

Comme il est peu probable de construire aussi aisÈment de telles congruences automorphes,
et encore n'avons nous pas poussÈ de telles constructions au maximum,
nous proposons la conjecture suivante Èquivalente ‡ \ref{conj}.

\begin{conj} \label{conj-2}
Pour tout $\pi_v$, pour tout $1 \leq r \leq s_g$ et pour tout $i$, la rÈduction modulo $l$ de la torsion
de $H^i_c(\FC_{\xi}(\pi_v,r))$, ne dÈpend dans 
les $\GF_{\bar \Fm_l,T}(v,d-rg)$ que de $T, i, \xi$ et $r_l(\pi_v)$.
\end{conj}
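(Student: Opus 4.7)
The strategy is to exploit the short exact sequence (\ref{eq-ss-torsion}) applied to $\PC = \FC_\xi(\pi_v,r)$: for any cuspidal $\pi_v$ with $r_l(\pi_v) = \varrho$, one has
$$0 \to H^i_c(\FC_\xi(\pi_v,r)) \otimes_{\Zm_l} \Fm_l \to H^i_c(\Fm\FC_\xi(\pi_v,r)) \to H^{i+1}_c(\FC_\xi(\pi_v,r))[l] \to 0.$$
The middle term depends only on $\Fm\FC_\xi(\pi_v,r)$, and by the formula (\ref{eq-chgt-cuspi}) this sheaf is determined by $\varrho$, $\xi$, $T$ and $r$ (up to the multiplicative factor $l^{u'-u}$ when one varies the level of the cuspidal). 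Consequently, the sum in the appropriate Grothendieck group $\GF_{\bar\Fm_l,T}(v,d-rg)$ of $r_l$ of the free part of $H^i_c$ and $r_l$ of the $l$-torsion of $H^{i+1}_c$ is an invariant of $(\varrho,\xi,T,i,r)$. The conjecture therefore reduces to showing that \emph{one} of these two quantities individually enjoys the same invariance.

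The second step is to make the free part explicit via Corollary \ref{coro-coho0}. As noted in the remark following Proposition \ref{prop-coho-gen}, the reduction mod $l$ of $S_{\pi_v}(s,t)(r,i)$ depends only on $(s,t,r,i)$ and $r_l(\pi_v)$, since the signs and shifts arising in the Jacquet-module computation are combinatorial invariants of the underlying multisegment. Hence the free-part equality
$$r_l\bigl(H^i_c(\FC_\xi(\pi_v,r))_{\mathrm{free}}\bigr) = r_l\bigl(H^i_c(\FC_\xi(\pi'_v,r))_{\mathrm{free}}\bigr)$$
is precisely the identity (\ref{eq-rs}) packaged by Conjecture \ref{conj}, and the short exact sequence above converts this equality into the torsion statement of Conjecture \ref{conj-2}, and vice versa, giving the equivalence announced by the author.

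To actually prove either conjecture, I would proceed by descending induction on $r$, starting from $r = s_g = \lfloor d/g \rfloor$ where the supersingular case of Proposition \ref{prop-ss-2} provides the base: there $j_! = j_{!*}$ and the $\bar\Qm_l$-cohomology is concentrated in middle degree. At each step, Proposition \ref{rl-hi} propagates the identity from level $r+1$ down to level $r$, while the $\speh_s$-contributions with $s \geq 2$ must be peeled off using the short exact sequence. The main obstacle is exactly the separation, for fixed $r$ and at each level $T$, of the contributions of the different families $\AC_{\xi,\pi_v}(r,s)$ indexed by $s$; the very ramified case of \S \ref{para-ramifie} disentangles them by a genericity hypothesis at some auxiliary place, but the general case requires an invariant finer than the Satake parameter that distinguishes $\speh_s(\st_t(\pi_v))$ with $s > 1$ from $\st_{st}(\pi_v)$. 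A natural candidate is the Whittaker depth of Proposition \ref{prop-red-Whittaker}: the remark after that proposition shows that reduction mod $l$ preserves a unique constituent of maximal Whittaker level, and combining this with the explicit formula for $R_{\pi_v[r]_D}$ could, in principle, track precisely which Grothendieck classes come from representations non-degenerate at $v$. Establishing the resulting automorphic congruences in a way that separates the $s$-contributions is the genuine difficulty, and it is ultimately why the author presents the statement as a conjecture rather than a theorem.
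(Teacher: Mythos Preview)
The statement is a \emph{conjecture}, and the paper does not prove it; what the paper supplies is (i) the remark that \ref{conj-2} is equivalent to the analogous statement for the free parts, and (ii) an argument that \ref{conj-2} is equivalent to Conjecture \ref{conj}. Your proposal correctly identifies (i) via the short exact sequence (\ref{eq-ss-torsion}) and correctly frames the problem as separating the contributions of the various $s$ in (\ref{eq-a-distinguer}), so on the structural level you and the paper agree.

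Where you diverge is in the mechanism proposed for separating the $s$. You suggest tracking the Whittaker depth through reduction modulo $l$, invoking Proposition \ref{prop-red-Whittaker}. The paper's route is different and more geometric: for fixed $r$ it runs a \emph{descending} induction on $s$, and to isolate the contribution of $s=s_0$ it looks not at the place $v$ but at the free part of $H^{s_0-1}_c\bigl(\FC_\xi(\pi_v,r-s_0+1)[d-(r-s_0+1)g]\bigr)$, i.e.\ at a \emph{smaller} stratum index $r'=r-s_0+1$ and a \emph{positive} cohomological degree $i=s_0-1$. The point is that the support diagrams of the $n_{s,t}(r',i)$ (figures \ref{fig-coho-n1}--\ref{fig-coho-n4}) have different shapes for different $s$, so the pair $(r',i)=(r-s_0+1,s_0-1)$ sees the $(r,s_0)$-contribution but not the $(r,s)$-contributions for $s<s_0$; subtracting what the induction hypothesis already controls then isolates $s_0$. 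This is what converts \ref{conj-2} into \ref{conj} and conversely.

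Your Whittaker-level idea is not wrong in spirit, but it operates at a single place and does not by itself give the cohomological leverage needed to peel off one $s$ at a time; the paper's use of the varying $(r',i)$ is the sharper tool here. Neither approach, of course, settles the conjecture.
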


\rem la torsion construite au \S \ref{para-torsion} vÈrifie bien cette propriÈtÈ

La conjecture \ref{conj-2} est Èquivalente ‡ demander la mÍme propriÈtÈ pour la rÈduction modulo $l$
des parties libres des $H^i_c(\FC_\xi(\pi_v,r))$, Ètant entendu que la partie libre d'un $\bar \Zm_l$-module
est le quotient par son sous-module de torsion.

Pour montrer l'Èquivalence entre les deux conjectures \ref{conj} et \ref{conj-2}, on raisonne
comme dans le paragraphe prÈcÈdent et il s'agit de sÈparer les contributions des diffÈrents 
$s$ dans la formule (\ref{eq-a-distinguer}). Pour ce faire on utilise les parties
libres des $H^i_c(\FC_\xi(r',\pi_v)[d-r'g])$ pour des $r'<r$ et $i>0$. En effet comme
les diagrammes des $n(r',i)$ du \S \ref{para-coho-c}, ne sont pas les mÍmes pour
diffÈrents $s$, on procËde comme suit. Pour $r$ fixÈ, on raisonne par rÈcurrence 
descendante sur $s$ en supposant que pour tout $s>s_0$ 
$$n^{(r,s)}_{(r,0),\pi_v}(\overline{\lambda_T} \otimes \varrho^{T,\oo} \otimes \varrho_v
\otimes \chi) =
n^{(r,s)}_{(r,0),\pi'_v}(\overline{\lambda_T} \otimes \varrho^{T,\oo} \otimes \varrho_v
\otimes \chi).$$
On regarde alors la rÈduction modulo $l$ des parties libres de $H^{s_0-1}_c(\FC_\xi(\pi_v,r-s_0+1)[d-(r-s_0+1)g])$
et $H^{s_0-1}_c(\FC_\xi(\pi'_v,r-s_0+1)[d-(r-s_0+1)g])$. Une fois enlevÈe les contributions des
des $(r',s)$ pour $r'>r$ et celles des $(r,s)$ pour $s \geq s_0$, ‡ l'aide de l'hypothËse de 
rÈcurrence, on obtient
$$n^{(r,s_0)}_{(r,0),\pi_v}(\overline{\lambda_T} \otimes \varrho^{T,\oo} \otimes \varrho_v
\otimes \chi) =
n^{(r,s_0)}_{(r,0),\pi'_v}(\overline{\lambda_T} \otimes \varrho^{T,\oo} \otimes \varrho_v
\otimes \chi).$$

\bibliographystyle{plain}
\bibliography{bib-ok}

\end{document}